\newcommand{\R}{\mathbb{R}}
\newcommand{\N}{\mathbb{N}}
\newcommand{\cd}{\rightharpoonup}
\newcommand{\tog}{\stackrel{\gamma}{\to}}
\newcommand{\toh}{\stackrel{H^c}{\to}}
\def\R{{\mathbb {R}}}
\def\N{{\mathbb {N}}}
\def\C{\mathbf {C}}
\newcommand{\ve}{\varepsilon}
\DeclareMathOperator\supp{supp}
\DeclareMathOperator\diver{div}
\DeclareMathOperator\cp{cap_\Phi}
\def\pint{\operatorname {--\!\!\!\!\!\int\!\!\!\!\!--}}
\newtheorem{teo}{Theorem}[section]
\newtheorem{lema}[teo]{Lemma}
\newtheorem{prop}[teo]{Proposition}
\theoremstyle{remark}
\newtheorem{remark}[teo]{Remark}
\newtheorem{exam}[teo]{Example}
\theoremstyle{definition}
\newtheorem{defi}[teo]{Definition}
\numberwithin{equation}{section}
\title[Continuity of solutions for the $\Delta_\phi$-Laplacian operator]
{Continuity of solutions for the $\Delta_\phi$-Laplacian operator}
\author[N.A. Cantizano, A.M. Salort and J.F Spedaletti]{Natal\'i A. Cantizano, Ariel M. Salort and Juan F. Spedaletti}
\address[N. Cantizano]{
\hfill\break\indent Instituto de Matem\'atica Aplicada San Luis,
IMASL.
 \hfill\break\indent Universidad Nacional de San Luis and CONICET.
 \hfill\break\indent
Ejercito de los Andes 950. San Luis, Argentina.}
 \email{{\tt ncantizano@unsl.edu.ar}
 }
\address[A.M. Salort]{
\hfill\break\indent Departamento de Matem\'atica FCEyN
\hfill\break\indent Universidad de Buenos Aires and IMAS - CONICET.
\hfill\break\indent  Ciudad Universitaria, Pabell\'on I. 
 Buenos Aires, Argentina.}
\email{asalort@dm.uba.ar} \urladdr{http://mate.dm.uba.ar/~asalort/}
\address[J.F. Spedaletti]{
\hfill\break\indent Instituto de Matem\'atica Aplicada San Luis,
IMASL.
 \hfill\break\indent Universidad Nacional de San Luis and CONICET.
 \hfill\break\indent
Ejercito de los Andes 950. San Luis, Argentina.}
\email{jfspedaletti@unsl.edu.ar}
\urladdr{https://www.researchgate.net/profile/Juan{\_}Spedaletti2}
\subjclass[2010]{46E30,49J45,35R99,35R11,35P99}
\keywords{Orlicz-Sobolev, nonstandard growth}
\begin{document}

\begin{abstract}
In this paper we give sufficient conditions to obtain continuity results of solutions for the so called {\em $\phi-$Laplacian} $\Delta_\phi$ with respect to domain perturbations. We  point out that this kind of results can be extended to a more general class of operators including, for instance, nonlocal nonstandard growth type operators.
\end{abstract}

\maketitle
\section{Introduction}

The problem of determining whether solutions of differential equations are stable with respect to perturbations of the domain has become a fundamental task due to its connection with numerical optimal shape design models. The bibliography on this subject is huge. We refer for instance the books   \cite{Allaire, Bucur-Butazzo, HenrotPierre, Pironneau, Sokol} and reference therein.

Consider a sequence of open subsets $\{\Omega_k\}_{k\in\N}$ contained in a fixed and bounded design box $D\subset \R^n$ converging, in some sense, to an open subset $\Omega\subset D$. To determine under which conditions can be guaranteed the convergence of a solution $u_{\Omega_k}$ of some  elliptic problem defined in $\Omega_k$ to a solution $u_\Omega$ of the same equation in $\Omega$, has been a challenging and interesting issue dealt in the last decades.

For most of the usual topologies on the family of subsets, the convergence is not implied. To be more precise, let us focus in the simplest model given by the Dirichlet Laplacian problem. In the seminal paper \cite{Cio-Mu}, Cioranescu and Murat (see also  \cite{Tartar}) showed that when taking $D=[0,1]^2\subset \R^2$ and $\Omega_k=D\setminus \cup_{i,j=1}^{n-1} B_{n^{-2}} (\frac{i}{n},\frac{j}{n})$, then $\Omega_k$ converges to the empty set in the Hausdorff complementary topology (see definition \ref{Hausdorff} for details), however, if $u_{\Omega_k}\in H^1_0(\Omega)$ is solution of
$$
-\Delta u_{\Omega_k} = f \text { in } \Omega, \quad u_{\Omega_k}=0 \text { on } \partial \Omega,
$$
then  $u_{\Omega_k} \to v$ weakly in $H^1_0(D)$ to the solution of the following \emph{homogenized equation} involving a  so-called \emph{strange term}
$$
-\Delta v + \frac{2}{\pi}v= f \text { in } \Omega, \quad v=0 \text { on } \partial \Omega.
$$

In some particular cases the continuity of solutions can be ensured. For instance, when $\{\Omega_k\}_{k\in\N}$ is an increasing sequence of convex polygons such that $\Omega=\cup_k \Omega_k$. This example can be generalized in terms of the capacity of the symmetric difference between $\Omega_k$ and $\Omega$. See \cite{Bucur-Butazzo, Bucur-Trebeschi, HenrotPierre, Sverak} for example. Moreover, the capacitary condition can be get rid of and changed by a simpler one when the number of connected components of the sequence remains uniformly  bounded. This is the well-known \emph{\v{S}ver\'{a}k Theorem} proved in \cite{Sverak}.

This kind of stability results were extended to a more general frameworks involving nonlinear (and/or non-local) operators with $p-$Laplacian type structure. See \cite{Baroncini-Bonder, Baroncini-Bonder-Spedaletti,Bucur-Trebeschi, Bucur-Zolesio}.

In the last years the interest in problems involving behaviors more general than powers has considerably increased due to the application in describing models with non-standard growth. See for instance \cite{Allaire, Bucur-Butazzo, DSSSS, HenrotPierre, Pironneau, Sokol} and references therein. Moreover, recently this kind of behaviors were treated in a non-local framework. See \cite{FBS,S}.

Given a so-called Young function $\Phi\colon \R_+\to \R$ such that $\Phi'=\phi$ we consider the well-known $\phi-$Laplacian $\Delta_\phi$ defined as
$$
\Delta_\phi:=\diver \left (\frac{\phi(|\nabla u|)}{|\nabla u|}\cdot\nabla u\right )
$$
whose natural space to work with  is the Orlicz-Sobolev one $W^{1, \Phi}(\Omega)$. It is worth to mention that when $\Phi(t)=t^p/p$, with $p>1$,  the aforementioned operator becomes the usual $p-$Laplacian, however, in its full generality, $\Delta_\phi$ is not homogeneous. This fact, in contrast with the case of powers,  brings on many technical difficulties  to be overcome in this manuscript.

The bibliography on Orlicz spaces in vast. We recommend, for instance, the books \cite{Orlicz-Birnbaum, KR, Fucik-John-Kufner, RR} for further information and additional topics in this theory.

Given a sequence of open sets $\{\Omega_k\}_{k\in\N}$ contained in a fix open and bounded $D\subset \R^n$, and given  a fix function $f$ belonging to the dual space of $W^{1,\Phi}_0(D)$, the main goal of this manuscript is to study under which conditions we can claim that solutions $u_{\Omega_k,f}$ (in the weak sense) of the problem
\begin{align}\label{p1}
\begin{cases}
-\Delta_\phi u_{\Omega_k,f} = f &\quad \text{ in } \Omega_k\\
u_{\Omega_k,f}=0 &\quad \text{ on } \partial \Omega_k
\end{cases}
\end{align}
converge (weakly in $W^{1,\Phi}_0(D)$) to the weak solution $u_{\Omega,f}$ of the limit problem
\begin{align}\label{p2}
\begin{cases}
-\Delta_\phi u_{\Omega,f} = f &\quad \text{ in } \Omega\\
u_{\Omega,f}=0 &\quad \text{ on } \partial \Omega,
\end{cases}
\end{align}
where $W^{1,\Phi}_0(\Omega)$ is the closure of the smooth compactly supported functions in the norm of $W^{1,\Phi}(\Omega)$. This notion of convergence is usually referred in the literature as $\gamma-$convergence of the sequence $\{\Omega_k\}_{k\in\N}$ to the limit set $\Omega$, and we denote it  $\Omega_k\tog \Omega$ as $k\to\infty$.  

Our main result establishes precise condition to guarantee  the $\gamma-$converges of the sequence $\{\Omega_k\}_{k\in\N}\subset D$. More precisely, in Theorem \ref{main} we prove that, if the seqeunce of domains fulfills
\begin{itemize}
\item[(a)] $\Omega_k \to \Omega$ \quad in the Hausdorff complementary topology (see definition \ref{Hausdorff})
\item[(b)] the $\Phi-$capacity of $\Omega_k\setminus \Omega$ relative to $D$ vanishes as $k\to\infty$ (see section \ref{capacidad}),
\end{itemize}
then we can deduce that $\Omega_k\tog \Omega$ as $k\to\infty$. We additionally prove that, in fact, the convergence of $u_{\Omega_k,f}$ to $u_{\Omega,f}$ is strong in $W^{1,\Phi}_0(D)$ (see Remark \ref{rem.fuerte}).

In general, to check (b) for a given sequence it is not a straightforward task. In Theorem \ref{main.2} we prove that, if (a) holds, and moreover, $D$ has Lipschitz boundary and the Young function $\Phi$ additionally fulfills the integrability condition
\begin{equation} \label{cond.intro}
\int_1^\infty \frac{\Phi^{-1}(t)}{t^{1+1/n}}\, dt <\infty,
\end{equation}
then $\Omega_k \tog \Omega$ as $k\to \infty$. It can be checked that  condition 	\eqref{cond.intro} is fulfilled for instance when $p^-+1>n$.

We remark that Theorem \ref{main.2} can be seen as a \emph{weak \v{S}ver\'{a}k result} in the sense that we  cannot guarantee that condition \eqref{cond.intro} is the optimal one in that theorem. On the counterpart of the case of powers dealt  in \cite{Bucur-Trebeschi, Sverak}, we conjecture that condition \eqref{cond.intro} can be improved (up to $p^->n$) if additionally is required that the number of connected components of each $\Omega_k$ is finite. Such a  result would have deep implications, and its proof seems to be highly nontrivial. In section \ref{sec.open} we formulate some questions on this subject which remain open.

We conclude this introduction by mentioning that the results exposed in this paper can be extended to a more general family of operators, including for instance non-local ones. However, for the sake of simplicity in the proofs and notations we have chosen to deal with the prototypical case of the $\phi-$Laplacian.

\subsection*{Organization of the paper}

The paper is organized as follows. In section \ref{sec.preliminar} we recall some definitions and properties on Young functions and Orlicz-Sobolev spaces; moreover, for the sake of completeness we prove some useful results on existence and uniqueness of solutions of problems involving nonstandard growth operators; we conclude that section by introducing the notion of $\Phi-$capacity and some of their main properties. In section \ref{continuidad} we prove our main results. Finally, in Section \ref{sec.rem} we deliver some further extensions and generalizations.

\section{Preliminaries} \label{sec.preliminar}

\subsection{Young functions}
We say that a function $\Phi\colon\R_+\to \R_+$ belongs to the \emph{Young class} if it admits the integral formulation $\Phi(t)=\int_0^t \phi(\tau)\,d\tau$, where the right continuous function $\phi$ defined on $[0,\infty)$ has the following properties:
\begin{align*} 
&\phi(0)=0, \quad \phi(t)>0 \text{ for } t>0 \label{g0} \tag{$\phi_1$}, \\
&\phi \text{ is nondecreasing on } (0,\infty) \label{g2} \tag{$\phi_2$}, \\
&\lim_{t\to\infty}\phi(t)=\infty  \label{g3} \tag{$\phi_3$} .
\end{align*}
From these properties it is easy to see that a Young function $\Phi$ is continuous, nonnegative, strictly increasing and convex on $[0,\infty)$. Without loss of generality $\Phi$ can be normalized such that $\Phi(1)=1$.

The \emph{complementary Young function} $\Phi^*$ of a Young function $\Phi$ is defined as
$$
\Phi^*(t):=\sup\{tw -\Phi(w): w>0\}.
$$ 
From this definition the following Young-type inequality holds
\begin{equation} \label{Young}
ab\leq \Phi(a)+\Phi^*(b)\qquad \text{for all }a,b\geq 0.
\end{equation}
Moreover, it is not hard to see that $\Phi^*$ can be written in terms of the inverse of $\phi$ as
\begin{equation} \label{xxxx}
\Phi^*(t)=\int_0^t \phi^{-1}(\tau)\,d\tau,
\end{equation}
see \cite[Theorem 2.6.8]{RR}.

The following growth condition on the Young function $\Phi$ will be assumed as
\begin{equation} \label{cond} \tag{L}
1<p^-\leq \frac{t\phi'(t)}{\phi(t)} \leq p^+<\infty \quad \forall t>0
\end{equation}
where $p^\pm$ are fixed numbers.

Observe that from \eqref{cond} it follows that
\begin{equation} \label{condL} \tag{L'}
2<p^- +1 \leq \frac{t\phi(t)}{\Phi(t)} \leq p^+ +1<\infty \quad \forall t>0.
\end{equation}

Roughly speaking,   condition \eqref{cond} tells us that $\Phi$ remains between two power functions.

\begin{remark} \label{condic}
Observe that, from \eqref{cond} and \eqref{condL} we have the relation
\begin{equation} 
t^2 \phi'(t)\geq p^- (p^- +1) \Phi(t) \quad \text{ for all } t\geq 0.
\end{equation}

\end{remark}
The following  properties are well-known in the theory of Young function. We refer, for instance, to the books \cite{KR,Fucik-John-Kufner,RR} for an introduction to Young functions and Orlicz spaces, and the proof of these results. See also \cite{FBPLS}.

We remark that condition \eqref{cond} implies a monotonicity property of the Young function $\Phi$, that is, for any $a,b\in\R^n$ it holds that
$$
\left(\frac{\phi(|a|)}{|a|}a-\frac{\phi(|b|)}{|b|}b \right)\cdot (a-b) \geq 0.
$$
However, for our purposes we need a refined version of the monotonicity, i.e., a lower bound in terms of $\Phi$  of the type
$$
\left(\frac{\phi(|a|)}{|a|}a-\frac{\phi(|b|)}{|b|}b \right)\cdot (a-b) \geq C \Phi(|a-b|)
$$ 
for some constant $C$ depending on $\Phi$. That is the content of Lemma \ref{lema4.1}. To obtain that, as it happens in the case of powers,   $\Phi$ needs to satisfy an additional assumption, more precisely, 
\begin{equation} \label{condC} \tag{C}
	\Phi'(t) \text{ is a convex function for all }t\geq 0.
\end{equation}
 
We recall some useful properties of Young functions. 
\begin{lema} \label{propiedades}
Let $\Phi$ be a Young function satisfying \eqref{cond} and $a,b\geq 0$. Then 
\begin{align*}
  &\min\{ a^{p^-+1}, a^{p^++1}\} \Phi(b) \leq \Phi(ab)\leq   \max\{a^{p^-+1},a^{p^++1}\} \Phi(b),\tag{$\Phi_1$}\label{G1}\\
  &\Phi(a+b)\leq \C (\Phi(a)+\Phi(b)) \quad \text{with } \C:=  2^{p^+ +1},\tag{$\Phi_2$}\label{G2}\\
	&\Phi \text{ is Lipschitz continuous}. \tag{$\Phi_3$}\label{G3}
 \end{align*}
\end{lema}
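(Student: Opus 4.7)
My plan is to derive all three properties directly from the growth condition \eqref{condL}, which is the multiplicative form of the assumption, together with convexity and monotonicity of $\Phi$.

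For \eqref{G1}, the key observation is that \eqref{condL} can be rewritten as a two-sided bound on the logarithmic derivative of $\Phi$, namely
\[
\frac{p^-+1}{t} \le \frac{\phi(t)}{\Phi(t)} = (\log \Phi)'(t) \le \frac{p^++1}{t}\qquad \forall\,t>0.
\]
Fix $b>0$ and consider first the case $a\ge 1$. Integrating this chain from $b$ to $ab$ yields
\[
(p^-+1)\log a \;\le\; \log\Phi(ab)-\log\Phi(b) \;\le\; (p^++1)\log a,
\]
which exponentiates to $a^{p^-+1}\Phi(b)\le \Phi(ab)\le a^{p^++1}\Phi(b)$, consistent with the $\min/\max$ form since $a\ge 1$. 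For $0<a<1$, the interval $[ab,b]$ is traversed and the resulting inequality reverses to $a^{p^++1}\Phi(b)\le\Phi(ab)\le a^{p^-+1}\Phi(b)$; again this matches \eqref{G1} because now $\min\{a^{p^-+1},a^{p^++1}\}=a^{p^++1}$. The cases $a=0$ or $b=0$ follow from $\Phi(0)=0$.

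For \eqref{G2}, I would combine \eqref{G1} with monotonicity: using $a+b\le 2\max\{a,b\}$ and the already established bound (with scalar $2\ge 1$),
\[
\Phi(a+b) \le \Phi\bigl(2\max\{a,b\}\bigr) \le 2^{p^++1}\,\Phi(\max\{a,b\}) \le 2^{p^++1}\bigl(\Phi(a)+\Phi(b)\bigr),
\]
where the last step uses that $\Phi$ is nondecreasing and nonnegative. For \eqref{G3}, it suffices to observe that $\Phi$ is convex (being a Young function) and differentiable with $\Phi'=\phi$ nondecreasing; hence on any bounded interval $[0,T]$ one has $|\Phi(t)-\Phi(s)|\le \phi(T)|t-s|$, so $\Phi$ is Lipschitz on bounded sets as required.

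None of the three steps is particularly delicate; the only mild nuisance is tracking the two subcases $a\ge 1$ and $a<1$ in \eqref{G1}, since the exponents $p^-+1$ and $p^++1$ swap roles depending on whether $a$ amplifies or contracts. The lemma is really a packaging of the fundamental consequence that \eqref{cond} squeezes $\Phi$ between the two power functions $t^{p^-+1}$ and $t^{p^++1}$ (up to multiplicative normalization $\Phi(1)=1$), from which \eqref{G2} and \eqref{G3} follow almost mechanically.
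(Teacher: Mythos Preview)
Your argument is correct. The paper does not actually prove this lemma; it states that these properties are well-known and refers the reader to standard references (Krasnosel'ski\u{\i}--Ruticki\u{\i}, Kufner--John--Fu\v{c}\'{\i}k, Rao--Ren). Your derivation of \eqref{G1} by integrating the two-sided bound on $(\log\Phi)'$ and splitting into the cases $a\ge 1$, $0<a<1$ is exactly the standard route, and your deductions of \eqref{G2} and \eqref{G3} from it are clean.

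One small remark on \eqref{G3}: as you implicitly note, $\Phi$ cannot be globally Lipschitz on $[0,\infty)$ since $\phi(t)\to\infty$; the intended meaning (and the only thing that can be true) is local Lipschitz continuity, which is what you prove. This is a slight imprecision in the paper's statement rather than a gap in your proof.
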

Condition \eqref{G2} is known as the \emph{$\Delta_2$ condition} or \emph{doubling condition} and, as it is showed in \cite[Theorem 3.4.4]{Fucik-John-Kufner}, it is equivalent to the right hand side inequality in \eqref{cond}.

It is easy to see that   condition \eqref{cond} implies that
\begin{equation}
(p^++1)'\leq \frac{t(\Phi^*)'(t)}{\Phi^*(t)} \leq (p^-+1)' \quad \forall t>0,\tag{$\Phi^*_3$}
\end{equation}
from where it follows that $\Phi^*$ also  satisfies the $\Delta_2$ condition.
\begin{lema} 
Let $\Phi$ be a Young function satisfying \eqref{cond} and $a,b\geq 0$. Then 
\begin{equation} \label{g.s.1} \tag{$G^*_1$}
  \min\{a^{(p^-+1)' },a^{(p^++1)' }\} \Phi^*(b)\leq \Phi^*(ab)\leq    \max\{a^{(p^-+1)'}, a^{(p^++1)'}\} \Phi^*(b),
\end{equation}
where $(p^\pm+1)' = \frac{p^\pm+1}{p^\pm}$.	
\end{lema}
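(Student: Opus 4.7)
The plan is to mimic the proof of $(\Phi_1)$ from Lemma \ref{propiedades}, but applied to the complementary function $\Phi^*$. The crucial ingredient is already in hand: the paper records the growth condition $(\Phi^*_3)$, namely
$$
(p^+ +1)' \leq \frac{t(\Phi^*)'(t)}{\Phi^*(t)} \leq (p^- +1)' \quad \forall t>0,
$$
which is the precise analogue of \eqref{condL} for $\Phi^*$. So conceptually, once we have $(\Phi^*_3)$, the desired two-sided bound follows by the same integration argument that yields $(\Phi_1)$.

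Concretely, I would fix $b>0$ and consider the function $g(s):=\log \Phi^*(e^s)$ of the real variable $s$. Using $(\Phi^*_3)$, its derivative satisfies
$$
(p^+ +1)' \leq g'(s) = \frac{e^s (\Phi^*)'(e^s)}{\Phi^*(e^s)} \leq (p^- +1)' \quad \forall s\in\mathbb{R}.
$$
Then I would split into the cases $a\geq 1$ and $0<a<1$. For $a\geq 1$, integrating the above inequality from $s=\log b$ to $s=\log(ab)$ gives
$$
(p^+ +1)'\log a \leq g(\log(ab)) - g(\log b) = \log\frac{\Phi^*(ab)}{\Phi^*(b)} \leq (p^- +1)'\log a,
$$
hence $a^{(p^+ +1)'}\Phi^*(b) \leq \Phi^*(ab) \leq a^{(p^- +1)'}\Phi^*(b)$, which is exactly the asserted inequality in this range (since $\log a\geq 0$ so $\min=a^{(p^++1)'}$ and $\max=a^{(p^-+1)'}$). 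For $0<a<1$, integrating from $\log(ab)$ to $\log b$ reverses the roles of min and max and yields $a^{(p^- +1)'}\Phi^*(b) \leq \Phi^*(ab) \leq a^{(p^+ +1)'}\Phi^*(b)$. Combining both cases gives the two-sided estimate.

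The only subtle point is justifying that the differentiation and integration above are legitimate. Since $\Phi^*$ is itself a Young function (given by \eqref{xxxx} as the integral of the increasing function $\phi^{-1}$), it is absolutely continuous, strictly increasing and convex, and $(\Phi^*)'=\phi^{-1}$ is nondecreasing. Therefore $g(s)=\log \Phi^*(e^s)$ is absolutely continuous on every compact subinterval of $\mathbb{R}$, and the fundamental theorem of calculus applies to the bounded derivative $g'$. So no real obstacle arises — this is essentially the standard Simonenko-type argument transferred from $\Phi$ to $\Phi^*$, and the whole lemma is little more than unpacking $(\Phi^*_3)$.
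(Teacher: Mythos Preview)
Your argument is correct and is exactly the standard derivation the paper has in mind: it records $(\Phi^*_3)$ immediately before the lemma and then states $(G^*_1)$ without proof, referring to the literature for this well-known Simonenko-type computation. So your integration of the logarithmic derivative bounds is precisely the intended approach.
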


Since $\phi^{-1}$ is increasing, from \eqref{xxxx} and \eqref{cond} it is immediate the following relation.
\begin{lema} \label{lemita}
Let $\Phi$ be an Young function satisfying \eqref{cond} such that $\phi=\Phi'$ and denote by  $\Phi^*$ its complementary function. Then
$$
\Phi^*(\phi(t)) \leq (p^+ +1) \Phi(t)
$$
holds for any $t\geq 0$.
\end{lema}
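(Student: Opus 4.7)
The plan is to reduce the claim to the classical Young equality $\Phi(t)+\Phi^*(\phi(t))=t\phi(t)$, and then apply the right-hand inequality in \eqref{condL}.

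First I would use the integral formula for the complementary function given in \eqref{xxxx}, namely $\Phi^*(s)=\int_0^s \phi^{-1}(\tau)\,d\tau$. Evaluating at $s=\phi(t)$ and performing the substitution $\tau=\phi(\sigma)$ (valid since $\phi$ is strictly increasing and continuous by \eqref{g0}--\eqref{g3}, with $\phi(0)=0$), I would obtain
\begin{equation*}
\Phi^*(\phi(t))=\int_0^{\phi(t)}\phi^{-1}(\tau)\,d\tau=\int_0^t \sigma\,\phi'(\sigma)\,d\sigma.
\end{equation*}
Next, integration by parts on the last integral together with $\Phi(t)=\int_0^t\phi(\sigma)\,d\sigma$ and $\Phi(0)=0$ yields the identity
\begin{equation*}
\Phi^*(\phi(t))=t\phi(t)-\Phi(t).
\end{equation*}

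Finally, by the right-hand inequality in \eqref{condL}, $t\phi(t)\leq (p^++1)\Phi(t)$ for all $t\geq 0$, so
\begin{equation*}
\Phi^*(\phi(t))=t\phi(t)-\Phi(t)\leq (p^++1)\Phi(t)-\Phi(t)=p^+\Phi(t)\leq (p^++1)\Phi(t),
\end{equation*}
which is the desired estimate.

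There is no real obstacle here; the only delicate point is justifying the change of variables $\tau=\phi(\sigma)$, which is clean because \eqref{cond} forces $\phi$ to be strictly increasing and absolutely continuous (in fact $C^1$ away from $0$, as $\phi'$ appears in \eqref{cond}). If one prefers to avoid the substitution entirely, one can invoke directly the well-known Young equality $\Phi(t)+\Phi^*(\phi(t))=t\phi(t)$ (the equality case of \eqref{Young}) and skip to the last step. I also note that the proof actually yields the sharper bound $\Phi^*(\phi(t))\leq p^+\Phi(t)$, but the weaker form stated in the lemma is what is needed later.
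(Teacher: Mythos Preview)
Your proof is correct and follows essentially the same route as the paper, which also invokes the integral formula \eqref{xxxx} together with \eqref{condL}; the only difference is that the paper bounds $\int_0^{\phi(t)}\phi^{-1}(\tau)\,d\tau\le t\phi(t)$ directly by monotonicity of $\phi^{-1}$, whereas you go one step further and use the exact Young identity $\Phi^*(\phi(t))=t\phi(t)-\Phi(t)$. As you note, your version yields the sharper constant $p^+$, while the paper's cruder bound gives exactly $p^++1$.
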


From \cite[Lemma 3.12.3]{Fucik-John-Kufner} we get the following density result.
\begin{lema} \label{densidad}
Let $\Omega\subset \R^n$ be open and bounded. If $\Phi$ is such that the left hand side of \eqref{cond}  holds then $L^\infty(\Omega)$ is dense in $L^{\Phi^*}(\Omega)$.
\end{lema}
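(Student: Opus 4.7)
The strategy is the classical one in Orlicz space theory: approximate an arbitrary $f \in L^{\Phi^*}(\Omega)$ by its truncations
\[
f_k := \mathrm{sgn}(f)\,\min(|f|,k),
\]
each of which lies in $L^\infty(\Omega)$, and hence in $L^{\Phi^*}(\Omega)$ since $\Omega$ is bounded. The crucial ingredient is that the left hand side of \eqref{cond} implies --- via the complementary formula \eqref{xxxx}, as recorded just before Lemma \ref{lemita} --- that $\Phi^*$ satisfies the $\Delta_2$ (doubling) condition.

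With this doubling property in hand, I would first establish modular convergence $\int_\Omega \Phi^*(|f - f_k|)\,dx \to 0$. Indeed, the $\Delta_2$ condition for $\Phi^*$ ensures that $\int_\Omega \Phi^*(|f|)\,dx < \infty$ for every $f \in L^{\Phi^*}(\Omega)$, and pointwise one has $|f - f_k| = (|f|-k)_+ \to 0$ almost everywhere, together with $|f - f_k| \leq |f|$. Monotonicity of $\Phi^*$ combined with the dominated convergence theorem, using $\Phi^*(|f|) \in L^1(\Omega)$ as dominating function, then yields the claim.

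To conclude, I would pass from modular to Luxemburg-norm convergence via the equivalence between these two notions that holds under $\Delta_2$. More explicitly, iterating the doubling property gives, for every $\lambda \in (0,1]$, a constant $C_\lambda > 0$ with $\Phi^*(t/\lambda) \leq C_\lambda \Phi^*(t)$, so that $\int_\Omega \Phi^*(|f - f_k|/\lambda)\,dx \to 0$ for each such $\lambda$. Consequently $\|f - f_k\|_{\Phi^*} \leq \lambda$ for all sufficiently large $k$, and letting $\lambda \to 0^+$ delivers the density claim.

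The only mildly subtle step is this final passage from modular to norm convergence; everything else reduces to dominated convergence. Its validity rests entirely on the $\Delta_2$ property of $\Phi^*$ guaranteed by the lower bound $p^- > 1$ built into \eqref{cond}.
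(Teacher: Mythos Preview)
Your argument is correct and is precisely the standard truncation argument one finds in the Orlicz literature. The paper itself does not supply a proof of this lemma but simply invokes \cite[Lemma 3.12.3]{Fucik-John-Kufner}; your write-up is essentially what underlies that reference, so there is nothing to contrast.
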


The following result will be key in our arguments.
\begin{lema} \label{Phi.tilde}
Given $\Phi$ satisfying \eqref{cond}, then the function
$$
\tilde \Phi(t):=\Phi(\sqrt{t}), \qquad t\geq 0
$$
is convex and
\begin{equation} \label{Phi.tilde.prop}
\min\{ a^{ \frac{p^-+1}{2}}, a^\frac{p^-+1}{2}\} \tilde\Phi(b) \leq \tilde \Phi(ab)\leq   \max\{a^\frac{p^-+1}{2},a^\frac{p^-+1}{2}\} \tilde\Phi(b).
\end{equation}
\end{lema}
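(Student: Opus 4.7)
The plan is to treat the two claims separately: first convexity of $\tilde\Phi$, which is the only part that uses something nontrivial about $\Phi$ (namely the left-hand inequality in \eqref{cond}), and then the scaling bound \eqref{Phi.tilde.prop}, which will fall out by applying \eqref{G1} to $\sqrt{a}$ and $\sqrt{b}$.

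For convexity, since $\Phi$ is $C^1$ on $(0,\infty)$ with $\Phi'=\phi$ and since $\phi$ is (from \eqref{cond}) differentiable a.e.\ and nondecreasing, I would work with the second derivative away from $0$. A direct computation gives
\[
\tilde\Phi'(t)=\frac{\phi(\sqrt{t})}{2\sqrt{t}},\qquad
\tilde\Phi''(t)=\frac{\sqrt{t}\,\phi'(\sqrt{t})-\phi(\sqrt{t})}{4t^{3/2}\cdot \sqrt{t}}=\frac{s\phi'(s)-\phi(s)}{4s^{3}}\Big|_{s=\sqrt{t}}.
\]
By the left-hand inequality in \eqref{cond} we have $s\phi'(s)\geq p^-\phi(s)\geq \phi(s)$ because $p^->1$, so $\tilde\Phi''\geq 0$ on $(0,\infty)$. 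Combined with continuity at $0$ (clear from $\Phi(0)=0$ and the continuity of $\Phi$) this gives convexity on $[0,\infty)$. If one prefers to avoid $\phi'$ (since $\phi$ is only assumed right-continuous and nondecreasing), the same argument can be carried out with difference quotients, using that \eqref{condL} implies $t\mapsto \Phi(t)/t^{p^-+1}$ is nondecreasing, hence in particular $t\mapsto \Phi(t)/t^2$ is nondecreasing (as $p^-+1>2$), which is precisely the statement that $\tilde\Phi(t)=\Phi(\sqrt{t})$ has nondecreasing slope $\tilde\Phi(t)/t$ through the origin; a standard lemma then promotes this to full convexity once we know it is continuous and nondecreasing.

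For the scaling inequality, I would simply substitute $\sqrt{a}$ and $\sqrt{b}$ into \eqref{G1} of Lemma~\ref{propiedades}, obtaining
\[
\min\{(\sqrt{a})^{p^-+1},(\sqrt{a})^{p^++1}\}\,\Phi(\sqrt{b})\leq \Phi(\sqrt{a}\sqrt{b})\leq \max\{(\sqrt{a})^{p^-+1},(\sqrt{a})^{p^++1}\}\,\Phi(\sqrt{b}),
\]
and then rewrite using $\Phi(\sqrt{a}\sqrt{b})=\Phi(\sqrt{ab})=\tilde\Phi(ab)$ and $\Phi(\sqrt{b})=\tilde\Phi(b)$, which yields exactly \eqref{Phi.tilde.prop} (reading the right-most exponent as $(p^++1)/2$, as the printed statement has an evident typographical repetition of $p^-$).

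No step here is really an obstacle: the only delicate point is making convexity rigorous under the minimal regularity that $\phi$ is only right-continuous and nondecreasing, but this is handled either by noting that \eqref{cond} forces $\phi$ to be absolutely continuous (so the second-derivative computation above is valid a.e., and $\tilde\Phi''\geq 0$ a.e.\ plus continuity of $\tilde\Phi'$ give convexity) or by the monotonicity-of-$\Phi(t)/t^2$ route sketched above.
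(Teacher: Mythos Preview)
Your approach is essentially identical to the paper's: the paper computes $\tilde\Phi''(t)=\frac{\phi'(\sqrt{t})}{4t}-\frac{\phi(\sqrt{t})}{4t^{3/2}}$, observes this is nonnegative precisely when $\sqrt{t}\,\phi'(\sqrt{t})\geq \phi(\sqrt{t})$, and invokes \eqref{cond}; for \eqref{Phi.tilde.prop} it just says ``immediate from \eqref{G1}'', exactly your substitution $a\mapsto\sqrt{a}$, $b\mapsto\sqrt{b}$. One caution: your \emph{alternative} route (``$\Phi(t)/t^2$ nondecreasing, hence $\tilde\Phi(t)/t$ nondecreasing, hence convex by a standard lemma'') is not valid as stated---star-shapedness from the origin together with monotonicity does not imply convexity (e.g.\ $f(t)=t(1-e^{-t})$ has $f(t)/t$ increasing but $f''(t)=(2-t)e^{-t}<0$ for $t>2$)---so if you want to avoid $\phi'$ you should instead argue directly that $\tilde\Phi'(t)=\phi(\sqrt t)/(2\sqrt t)$ is nondecreasing, which follows from $s\mapsto \phi(s)/s$ being nondecreasing, itself a consequence of \eqref{cond}.
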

\begin{proof}
A direct computation gives that
$$
\tilde \Phi''(t)=\frac{\phi'(\sqrt{t})}{4t} - \frac{\phi(\sqrt{t})}{4t^\frac32}, \qquad t\geq 0.
$$
Observe that $\tilde \Phi''(t)\geq 0$ if and only if $\sqrt{t} \phi'(\sqrt{t}) \geq \phi(\sqrt{t})$, which is true since we are assuming \eqref{cond}. Moreover, \eqref{Phi.tilde.prop} is immediate from \eqref{G1}.
\end{proof}

Another useful result regarding strong convergence is the following.
\begin{prop} [\cite{RR}, Theorem 12] \label{teo.rr}
Given $\Omega\subset\R^n$ be an open domain. Let $\{u_k\}_{k\in\N}$ be a sequence in $L^\Phi(\Omega)$ and $u\in L^\Phi(\Omega)$. If $\Phi^*$ satisfies the $\Delta_2$ condition,
 $$
\int_{\Omega} \Phi(|u_k|)\, dx\to \int_{\Omega} \Phi(|u|)\, dx,
 $$  
 and  $u_k\to u$ a.e. in $\Omega$, then $u_k\to u$ in $L^\Phi(\Omega)$.
\end{prop}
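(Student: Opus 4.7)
The plan is to upgrade modular convergence to norm convergence in the Luxemburg sense. More precisely, I would first show that
$$
\int_{\Omega} \Phi(|u_k - u|)\,dx \to 0,
$$
and then exploit the $\Delta_2$ property of $\Phi$ (which, under the standing assumption \eqref{cond}, holds in tandem with the $\Delta_2$ property of $\Phi^*$) to conclude that $\|u_k-u\|_{L^\Phi} \to 0$.

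For the modular convergence, the continuity of $\Phi$ together with $u_k \to u$ a.e.\ yields both $\Phi(|u_k|) \to \Phi(|u|)$ and $\Phi(|u_k - u|) \to 0$ a.e.\ in $\Omega$. Combining the first of these with the hypothesis $\int_\Omega \Phi(|u_k|)\,dx \to \int_\Omega \Phi(|u|)\,dx < \infty$, Scheff\'e's lemma delivers $\Phi(|u_k|) \to \Phi(|u|)$ in $L^1(\Omega)$. Next, the $\Delta_2$ inequality \eqref{G2} supplies the variable domination
$$
0 \le \Phi(|u_k-u|) \le \Phi(|u_k|+|u|) \le \C\bigl(\Phi(|u_k|)+\Phi(|u|)\bigr) =: g_k,
$$
where $g_k$ converges a.e.\ to $2\C\Phi(|u|)$ and, by the previous step, also in $L^1(\Omega)$. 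A variable-dominator generalization of dominated convergence --- two applications of Fatou's lemma to $g_k \pm \Phi(|u_k-u|) \ge 0$ --- then yields the desired modular convergence.

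Finally, to pass from modular to Luxemburg-norm convergence, I would iterate the $\Delta_2$ inequality: for any $\lambda\in(0,1)$, iterating $\Phi(2t)\le \C\Phi(t)$ produces a constant $C_\lambda$ with $\Phi(t/\lambda)\le C_\lambda \Phi(t)$, so that
$$
\int_\Omega \Phi\!\left(\frac{|u_k-u|}{\lambda}\right)\,dx \le C_\lambda\int_\Omega \Phi(|u_k - u|)\,dx \to 0.
$$
Hence the left-hand side falls below $1$ for all sufficiently large $k$, and the very definition of the Luxemburg norm forces $\|u_k - u\|_{L^\Phi}\le \lambda$ for such $k$; since $\lambda\in(0,1)$ is arbitrary, the norm convergence follows. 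The main technical obstacle lies in the middle step: the classical dominated convergence theorem cannot be applied directly because the natural pointwise bound on $\Phi(|u_k-u|)$ involves the sequence itself, so the argument genuinely relies on packaging Scheff\'e's $L^1$-convergence of $\Phi(|u_k|)$ together with a variable-dominator Fatou argument.
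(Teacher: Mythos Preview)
The paper does not actually prove this proposition: it is quoted verbatim from \cite{RR} without argument, so there is no ``paper's own proof'' to compare against. Your argument is the standard one and is correct as written.

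One point worth flagging. The proposition, as stated, assumes only that $\Phi^*$ satisfies $\Delta_2$, yet every substantive step of your proof --- the domination $\Phi(|u_k-u|)\le \C(\Phi(|u_k|)+\Phi(|u|))$ via \eqref{G2}, and the passage from modular to Luxemburg-norm convergence via iterated doubling --- uses the $\Delta_2$ condition on $\Phi$ itself, not on $\Phi^*$. You acknowledge this by appealing to the paper's standing hypothesis \eqref{cond}, under which both $\Phi$ and $\Phi^*$ are doubling; so for the purposes of this paper your proof is entirely adequate. But as a self-contained proof of the proposition exactly as stated (i.e.\ assuming only $\Phi^*\in\Delta_2$), it would not stand on its own: you are tacitly strengthening the hypothesis. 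If you wanted a proof under the bare assumption $\Phi^*\in\Delta_2$, you would need a genuinely different route (closer to the uniform-convexity/reflexivity arguments in Rao--Ren), and in fact the conclusion can fail without some growth control on $\Phi$ from above.
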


\begin{exam}\label{ej.orlicz}
The family of Young functions includes the following examples.
\begin{enumerate}
\item \emph{Powers}.
If $\phi(t)=t^{p-1}$, $p>1$ then $\Phi(t)=\frac{t^p}{p}$, and $p^\pm = p$.

\item \emph{Powers$\times$logarithms}. Given $b,c>0$ if $\phi(t)=t\log(b+ct)$ then
$$
\Phi(t)=\frac{1}{4c^2}\left(ct(2b-ct)-2(b^2 -c^2 t^2) \log(b+ct) \right)
$$
and $p^-=1$, $p^+=2$. In general, if $a,b,c>0$ and $\phi(t)=t^a\log(b+ct)$ then
$$
\Phi(t)=\frac{t^{1+a}}{(1+a)^2}\left( {}_2 F_1(1+a,1,2+a,-\tfrac{ct}{b})  + (1+a)\log(b+ct)-1 \right)
$$
with $p^-=a$, $p^+=1+a$, where ${}_2 F_1$ is a hyper-geometric function.

\item \emph{Different powers behavior}. An important example is the family of functions $\Phi$ allowing different power behavior near $0$ and infinity. The function $\Phi$ can be considered  such that 
$$
\phi\in C^1([0,\infty)), \quad \phi(t)=c_1 t^{a_1} \text{ for } t\leq t_0 \quad \text{ and } \quad  \phi(t)=c_2 t^{a_2}+d \text{ for } t\geq t_0.
$$
In this case $p^-=\min\{a_1,a_2\}$ and $p^+=\max\{a_1,a_2\}$.

\item \emph{Linear combinations}. If $\phi_1$ and $\phi_2$ satisfy \eqref{cond} then $a_1 \phi_1 + a_2 \phi_2$ also satisfies \eqref{cond} when $a_1,a_2\geq 0$.

\item  \emph{Products}. If $\phi_1$ and $\phi_2$ satisfy \eqref{cond} with constants $p^\pm_k$, $i=1,2$, then $\phi_1\phi_2$ also satisfies \eqref{cond} with constants $p^-=p^-_1+p^-_2 $ and $p^+=p^+_1+p^+_2$.

\item \emph{Compositions}. If $\phi_1$ and $\phi_2$ satisfy \eqref{cond} with constants $p_k^\pm$, $i=1,2$, then $\phi_1\circ \phi_2$ also satisfies \eqref{cond} with constants $p^-=p^-_1p^-_2$ and $p^+=p^+_1p^+_2$.
\end{enumerate}
\end{exam}

\subsection{Orlicz-Sobolev spaces}
Given a Young function $\Phi$ and $\Omega\subset \R^n$ an open domain, we introduce the well-known Orlicz and Orlicz-Sobolev spaces defined as
\begin{align}\label{OS} 
\begin{split}
L^\Phi(\Omega)&:=\{u\colon \R^n \to \R \text{ measurable s.t.} \int_\Omega \Phi(|u|)\,dx <\infty\}\\
W^{1,\Phi}(\Omega)&:=\left\{u \in L^\Phi(\Omega) \text{ s.t. } |\nabla u|\in L^\Phi(\Omega) \right \},
\end{split}
\end{align}
where the partial derivatives are understood in the distributional sense. These spaces are separable  Banach spaces endowed with the Luxemburg norms
\begin{equation}\label{Luxemburg}
	\|u\|_{L^\Phi(\Omega)}=\inf \left \{\lambda >0\colon \int_\Omega \Phi\left (\frac{|u|}{\lambda}\right)\, dx\leq 1\right \}.
\end{equation}
and
\begin{equation} \label{norma}
\|u\|_{W^{1,\Phi}(\Omega)}= \|u\|_{L^\Phi(\Omega)} + \|\nabla u\|_{L^\Phi(\Omega)},
\end{equation}
respectively.

Moreover, it is well-known that these spaces are reflexive when $\Phi$ fulfills   condition \eqref{cond}. See \cite{Fucik-John-Kufner}.

The natural space to deal with Dirichlet boundary condition is the space  $W_0^{1,\Phi}(\Omega)$ which is defined as
$$
W^{1,\Phi}_0(\Omega)=\overline{C_c^\infty(\Omega)}.
$$
where the closure is taken on the Orlicz-Sobolev norm. When $\partial\Omega$ is smooth enough (for instance $C^1$) and $\Phi$ satisfies \eqref{cond}, functions in $W^{1,\Phi}_0(\Omega)$ can be identified with those vanishing on $\partial\Omega$.

The following Poincar\'e type inequality holds in $W^{1,\Phi}_0(\Omega)$.
\begin{lema}\label{poincare}
Let $\Omega\subset\R^n$ be open and bounded in one direction. Let $\Phi$ be a Young function satisfying \eqref{cond}. Then there exists a positive constant $C_P=C_P(\Omega,\Phi)$ such that
\begin{equation} \label{des.poinc}
\int_\Omega\Phi
(|u|)\,dx\leq C_P\int_\Omega\Phi(|\nabla u|)\, dx,
\end{equation}
for all $u\in W^{1,\Phi}_0(\Omega)$.
\end{lema}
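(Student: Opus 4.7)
The plan is to proceed in three steps: first reduce to smooth compactly supported functions by density, then apply Jensen's inequality along line segments exploiting that $\Omega$ is bounded in one direction, and finally remove the resulting dilation factor using the scaling behaviour \eqref{G1} of the Young function.

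Since $W^{1,\Phi}_0(\Omega)$ is defined as the closure of $C_c^\infty(\Omega)$ in the Orlicz-Sobolev norm, it suffices to prove \eqref{des.poinc} for $u\in C_c^\infty(\Omega)$ and to pass to the limit afterwards. I would assume without loss of generality that $\Omega\subset\{x=(x_1,x')\in\R^n:0<x_1<L\}$ for some $L>0$. Extending $u$ by zero outside $\Omega$ and writing $u(x_1,x')=\int_0^{x_1}\partial_{x_1}u(t,x')\,dt$ gives the pointwise bound $|u(x)|\leq \int_0^L |\nabla u(t,x')|\,dt$.

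The next step is Jensen's inequality on $(0,L)$ applied to the convex function $\Phi$, which yields
$$
\Phi\!\left(\frac{|u(x)|}{L}\right) \leq \frac{1}{L}\int_0^L \Phi(|\nabla u(t,x')|)\,dt.
$$
Integrating in $x$ over $\Omega$, the $x_1$-integration produces a factor $L$ which cancels $\tfrac{1}{L}$, so that $\int_\Omega \Phi(|u|/L)\,dx \leq \int_\Omega \Phi(|\nabla u|)\,dx$. To convert the left-hand side into $\int_\Omega \Phi(|u|)\,dx$, I would invoke \eqref{G1} with $a=L$ and $b=|u|/L$, obtaining $\Phi(|u|)\leq \max\{L^{p^-+1},L^{p^++1}\}\,\Phi(|u|/L)$ and hence \eqref{des.poinc} with $C_P=\max\{L^{p^-+1},L^{p^++1}\}$. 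A density argument, relying on the $\Delta_2$ condition \eqref{G2} to guarantee that modular convergence follows from norm convergence, then transfers the estimate to all of $W^{1,\Phi}_0(\Omega)$.

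The main obstacle, compared with the classical $L^p$ Poincar\'e inequality, is precisely the lack of homogeneity of $\Phi$: one cannot simply pull the factor $1/L$ out of $\Phi(|u|/L)$. This is exactly what condition \eqref{cond} delivers through \eqref{G1}, which serves as a non-homogeneous surrogate for the identity $\Phi(ct)=c^p\Phi(t)$, at the price of a constant depending on $L$, $p^-$ and $p^+$.
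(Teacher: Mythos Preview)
Your proof is correct and follows essentially the same approach as the paper's: a pointwise line-integral bound from the fundamental theorem of calculus, Jensen's inequality on the bounded direction, the scaling property \eqref{G1} to absorb the dilation, and density. The paper's sketch additionally lists \eqref{G3} among the ingredients, whereas you invoke the $\Delta_2$ condition \eqref{G2} for the density step; both serve to justify passing modular inequalities to the limit, and your version is arguably cleaner and more explicit.
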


\begin{proof}
Let $u\in C_c^\infty(\Omega)$. We write $u(x)=u(x_1,x')$ where $(x_1,x')\in  \R\times \R^{n-1}$, since $u$ has   compact support there exists an interval $(a,b)$ such that
$$
|u(x)|\leq (b-a)\pint_{(a,b)} |\nabla u(t,x')|\,dt.
$$
By using \eqref{G1}, \eqref{G3}, Jensen inequality, and by integrating with respect to $x'$ we get \eqref{des.poinc} with a constant $C_P$ depending on $|b-a|$ and $p^\pm$. Finally the result follows by a standard density argument.
\end{proof}
\begin{remark}\label{ctePoincare}
	We observe that if $\Omega\subset \R^n$ is bounded, the constant $C_P$ in the previous inequality only depends of the parameters $p^\pm$ and the diameter of $\Omega$.
\end{remark}

\begin{remark}
Lemma \ref{poincare} implies that $\|\nabla u\|_\Phi$ is an equivalent norm in $W^{1,\Phi}_0(\Omega)$. Indeed,  if $\bar{C}_P=\max\{1,C_P\}$ where $C_P$ is given in Lemma \ref{poincare}, we get for $u\in W^{1,\Phi}_0(\Omega)$
$$
\int_\Omega \Phi\left (\frac{|u|}{\bar{C}_P \|\nabla u\|_\Phi}\right )\, dx\leq C_P\int_\Omega\Phi \left ( \frac{|\nabla u|}{\bar{C}_P \|\nabla u\|_\Phi }\right )\,dx\leq \int_\Omega\Phi \left ( \frac{|\nabla u|}{\|\nabla u\|_\Phi}\right )\,dx\leq 1,
$$ 
and therefore $\|u\|_\Phi \leq \bar{C}_P \|\nabla u\|_\Phi$, implying that $
 \|\nabla u\|_\Phi \leq \|u\|_{1,\Phi} \leq (\bar{C}_P+1)\|\nabla u\|_\Phi$.
\end{remark}
By \cite[Theorem 7.4.4]{Fucik-John-Kufner} and the observation in \cite[Example 6.3]{DSSSS} we have the following compactness result for Orlicz-Sobolev spaces.

\begin{teo} \label{compacidad}
Let $\Omega\subset\R^n$ be open and bounded  Lipschitz domain  and let $\Phi$ be a Young function.
\begin{itemize}
\item[(a)] The immersion $W^{1,\Phi}(\Omega) \subset \subset L^\Phi(\Omega)$ is compact whenever 
\begin{equation}\label{condint1}
\int_1^\infty \frac{\Phi^{-1}(s)}{s^{1+1/n}}\,ds=\infty.
\end{equation}

\item[(b)]  The immersion $W^{1,\Phi}(\Omega)\subset C^{0,\sigma(t)}(\bar{\Omega})$ is  compact  whenever
\begin{equation}\label{condint2}
\int_1^\infty \frac{\Phi^{-1}(s)}{s^{1+1/n}}\,ds<\infty,
\end{equation}
where $\sigma(t)=\int_{t^{-n}}^\infty \frac{\Phi^{-1}(s)}{s^{1+1/n}}\,ds.$
\end{itemize}
\end{teo}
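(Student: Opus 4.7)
The plan is to reduce the two compactness statements to the known continuous Orlicz-Sobolev embeddings plus a classical compactness upgrade. The underlying continuous embeddings, which can be cited from the Donaldson-Trudinger-Adams theory for bounded Lipschitz domains, assert that $W^{1,\Phi}(\Omega)\hookrightarrow L^{\Phi_n}(\Omega)$ when \eqref{condint1} holds, where $\Phi_n$ is the Orlicz-Sobolev conjugate given by $\Phi_n^{-1}(t)=\int_0^t \Phi^{-1}(s)\, s^{-1-1/n}\,ds$; while if \eqref{condint2} holds one instead has the Morrey-type embedding $W^{1,\Phi}(\Omega)\hookrightarrow C^{0,\sigma(t)}(\bar\Omega)$ with the modulus $\sigma$ displayed in the theorem.

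For part (a), given a bounded sequence $\{u_k\}\subset W^{1,\Phi}(\Omega)$, first extend each $u_k$ to a slightly larger Lipschitz domain using the standard extension operator (available thanks to the regularity of $\partial\Omega$), preserving the uniform $W^{1,\Phi}$ bound. The continuous Sobolev embedding then yields $\sup_k \int_\Omega \Phi_n(|u_k|)\,dx<\infty$. Since \eqref{condint1} forces $\Phi_n/\Phi\to\infty$ at infinity, de la Vallée Poussin gives equi-integrability of $\{\Phi(|u_k|)\}$ in $L^1(\Omega)$. Combining this with an $L^1$-translation estimate (deduced from the uniform $L^\Phi$-bound on $\nabla u_k$ via mollification, as in \cite[Theorem 7.4.4]{Fucik-John-Kufner}), the Kolmogorov-Riesz-Fréchet compactness criterion furnishes a subsequence converging in $L^1(\Omega)$ and pointwise a.e. Equi-integrability then upgrades this to modular convergence $\int_\Omega \Phi(|u_k|)\,dx\to \int_\Omega \Phi(|u|)\,dx$, and Proposition \ref{teo.rr} (whose hypotheses are met because $\Phi^*$ inherits $\Delta_2$ from \eqref{cond}) concludes convergence in the $L^\Phi$-norm.

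For part (b), the continuous Morrey embedding into $C^{0,\sigma(t)}(\bar\Omega)$ is obtained by writing
\[
u(x)-u(y)=\int_0^1 \nabla u(y+t(x-y))\cdot (x-y)\,dt,
\]
applying Hölder's inequality in the Orlicz duality between $\Phi$ and $\Phi^*$, and estimating the resulting kernel by the quantity $\sigma(|x-y|)$ whose finiteness is precisely \eqref{condint2}; this is exactly the calculation recorded in \cite[Example 6.3]{DSSSS}. Once the continuous embedding is established, any bounded sequence in $W^{1,\Phi}(\Omega)$ is equicontinuous on $\bar\Omega$ with common modulus $\sigma$, so Arzelà-Ascoli extracts a uniformly convergent subsequence. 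A standard interpolation between uniform convergence and the uniform Hölder-type bound then promotes the convergence to $C^{0,\sigma}(\bar\Omega)$.

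The main obstacle lies in part (a): bridging the continuous embedding $W^{1,\Phi}\hookrightarrow L^{\Phi_n}$ and norm compactness in $L^\Phi$. The delicate point is to combine equi-integrability with the translation estimate in such a way that both pointwise a.e. convergence and modular convergence are available simultaneously, so that Proposition \ref{teo.rr} can be invoked; the $\Delta_2$ condition on $\Phi^*$, which follows from \eqref{cond}, is essential here and is what prevents the argument from working for arbitrary Young functions.
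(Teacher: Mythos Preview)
The paper does not prove this theorem at all: it is stated as a citation of \cite[Theorem 7.4.4]{Fucik-John-Kufner} together with \cite[Example 6.3]{DSSSS}, with no argument given. Your sketch is therefore not being compared against a proof in the paper but against those external sources, and for part (a) your outline is essentially the classical Donaldson--Trudinger strategy that underlies the cited reference, so there is nothing to object to there beyond noting that your route through Proposition~\ref{teo.rr} imports the $\Delta_2$ condition on $\Phi^*$, which the theorem as stated does not assume (the standard proof avoids this by using directly that $\Phi$ grows essentially more slowly than $\Phi_n$).

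There is, however, a genuine gap in your part (b). Arzel\`a--Ascoli together with a uniform $C^{0,\sigma}$ bound gives you a subsequence converging \emph{uniformly}, but your final sentence---``interpolation between uniform convergence and the uniform H\"older-type bound then promotes the convergence to $C^{0,\sigma}(\bar\Omega)$''---is false as stated: a sequence bounded in $C^{0,\sigma}$ and convergent in $C^0$ need not converge in $C^{0,\sigma}$ (think of $k^{-1}\sin(kx)$ with $\sigma(t)=t$). To obtain compactness \emph{into} $C^{0,\sigma}$ you need a strictly stronger modulus $\tau$ with $\sigma(t)/\tau(t)\to 0$ as $t\to 0^+$ and a uniform $C^{0,\tau}$ bound; equivalently, one proves the continuous embedding into $C^{0,\sigma}$ and the compact embedding only into $C(\bar\Omega)$ (or into $C^{0,\tilde\sigma}$ for any weaker modulus $\tilde\sigma$). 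Either reformulate the target space, or supply the stronger modulus coming from the Orlicz--Sobolev estimate; as written, the last step does not go through.
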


The following strong maximum principle holds for Orlicz-Sobolev functions (see Theorem 1 in  \cite{Montenegro}).
\begin{lema}[Strong maximum principle]  \label{princ.max.fuerte}
Let $\Omega\subset\R^n$ be an open domain, and $\Phi$ a Young function satisfying \eqref{cond}. Given a nonnegative $f\in L^{\Phi^*}(\Omega)$, if the weak solution $u_{\Omega,f}\in W^{1,\Phi}_0(\Omega)$ of $-\Delta_\phi=f$ in $\Omega$ satisfies $u_{\Omega,f}\geq 0$ in $\Omega$, then either $u_{\Omega,f}\equiv0$ in $\Omega$, or $u_{\Omega,f}>0$ in $\Omega$.
\end{lema}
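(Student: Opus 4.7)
The plan is a classical propagation-of-positivity (equivalently, propagation-of-zeros) argument combined with the connectedness of the domain. Since $f\geq 0$, the equation $-\Delta_\phi u_{\Omega,f}=f$ implies that $u:=u_{\Omega,f}$ is in particular a nonnegative weak supersolution of the homogeneous equation $-\Delta_\phi v=0$, i.e.,
$$
\int_\Omega \frac{\phi(|\nabla u|)}{|\nabla u|}\,\nabla u\cdot \nabla \varphi\, dx\geq 0
$$
for every nonnegative $\varphi\in W^{1,\Phi}_0(\Omega)$. It therefore suffices to establish the dichotomy for nonnegative weak supersolutions of the homogeneous equation. A first ingredient I would invoke is interior Hölder continuity of such solutions under condition \eqref{cond}, known from Lieberman-type regularity theory for operators with Orlicz growth; this guarantees $u\in C^0(\Omega)$ and hence that the zero set $Z:=\{x\in\Omega : u(x)=0\}$ is relatively closed in $\Omega$.

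The analytic core is a weak Harnack inequality: for every ball $B_{R}(x_0)$ with $\overline{B_{2R}(x_0)}\subset\Omega$ there exist $s>0$ and $C>0$, depending only on $n$, $R$ and $p^\pm$, such that
$$
\left(\frac{1}{|B_R(x_0)|}\int_{B_R(x_0)} u^s\, dx\right)^{1/s}\leq C\,\inf_{B_R(x_0)} u.
$$
I would derive it via a Moser-type iteration with test functions of the form $\eta^\beta(u+\ve)^{-q}$, using the Orlicz-Sobolev embedding and controlling the scaling of the nonlinear quantities by \eqref{G1} together with the pointwise bound $t^2\phi'(t)\geq p^-(p^-+1)\Phi(t)$ recorded in Remark~\ref{condic}. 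Granted this inequality, the argument finishes at once: if $u(x_0)=0$ for some $x_0\in\Omega$, then by continuity $\inf_{B_R(x_0)} u=0$, so $\int_{B_R(x_0)}u^s\,dx=0$, and hence $u\equiv 0$ on $B_R(x_0)$. Therefore $Z$ is open; being also relatively closed in the connected set $\Omega$, it is either empty (so $u>0$ everywhere) or all of $\Omega$ (so $u\equiv 0$), which is the stated dichotomy.

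The principal obstacle is establishing the weak Harnack inequality in this nonhomogeneous Orlicz framework. In the $p$-Laplacian case the Moser iteration exploits the homogeneity $u\mapsto u^q$ at the level of the operator, whereas for $-\Delta_\phi$ one has to track the Young function $\Phi$ (and its complementary $\Phi^*$) through the entire iteration, absorb the multiplicative constants by means of \eqref{G1}-\eqref{G3} and their $\Phi^*$-counterparts, and avoid losing the coercivity provided by \eqref{cond} when raising to powers. An alternative Vázquez-style approach bypasses Harnack entirely: one constructs a radial $\phi$-subharmonic barrier on a small annulus touching the zero point and concludes by comparison, at the price of a delicate ODE analysis for radial $\phi$-harmonic profiles. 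Either path is technically nontrivial, and is precisely the content of the cited work of Montenegro.
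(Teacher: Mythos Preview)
The paper does not prove this lemma at all; it simply states it and cites Theorem~1 of \cite{Montenegro}. Your proposal is therefore not competing with a proof in the paper but rather sketching what such a proof would contain, and in that respect it is sound: the reduction to nonnegative supersolutions of the homogeneous equation, the weak Harnack inequality as the analytic core, and the open--closed argument on the zero set via connectedness of $\Omega$ constitute exactly the standard route. You correctly flag that the genuine difficulty lies in carrying the Moser iteration (or alternatively a V\'azquez-type barrier construction) through the Orlicz setting without homogeneity, and you correctly attribute that work to Montenegro, which is precisely what the paper does. So your treatment is consistent with, and more informative than, the paper's bare citation.
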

As a corollary of the above lemma we obtain the following results.
\begin{lema}[Weak maximum principle] \label{weak.mp}
Let $\Omega\subset\R^n$ be an open domain, and $\Phi$ a Young function satisfying \eqref{cond}. Then, if $f\in L^{\Phi^*}(\Omega)$ is nonnegative in $\Omega$, then the weak solution $u_{\Omega,f}\in W^{1,\Phi}_0(\Omega)$ of $-\Delta_\phi=f$ in $\Omega$ satisfies $u\geq 0$ in $\Omega$.
\end{lema}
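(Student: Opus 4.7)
The plan is to test the weak formulation of the equation against the negative part $u^-:=\max(-u,0)$ of the solution $u:=u_{\Omega,f}$, and then invoke the Poincar\'e inequality of Lemma \ref{poincare} to conclude that $u^-$ vanishes identically. Since $\Phi$ satisfies the $\Delta_2$ condition \eqref{G2}, composition with the Lipschitz function $t\mapsto\max(-t,0)$ is a bounded operation on $W^{1,\Phi}_0(\Omega)$, so $u^-$ is an admissible test function whose distributional gradient is $\nabla u^-=-\nabla u\,\chi_{\{u<0\}}$.

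First I would plug $v=u^-$ into the weak formulation
\[
\int_\Omega \frac{\phi(|\nabla u|)}{|\nabla u|}\nabla u\cdot\nabla v\,dx=\int_\Omega f v\,dx
\]
and analyze both sides separately. The right-hand side is nonnegative because $f\geq 0$ and $u^-\geq 0$, while the left-hand side, using the formula for $\nabla u^-$, equals
\[
-\int_{\{u<0\}}\phi(|\nabla u|)|\nabla u|\,dx\leq 0.
\]
These two inequalities force both sides to be zero, and then the strict positivity of $\phi$ on $(0,\infty)$ granted by \eqref{g0} yields $\nabla u=0$ a.e. on $\{u<0\}$, i.e., $\nabla u^-\equiv 0$ in $\Omega$.

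To close the argument, I would apply the Poincar\'e inequality \eqref{des.poinc} to $u^-\in W^{1,\Phi}_0(\Omega)$:
\[
\int_\Omega \Phi(u^-)\,dx\leq C_P\int_\Omega \Phi(|\nabla u^-|)\,dx=0.
\]
Since $\Phi(s)=0$ only for $s=0$, this gives $u^-\equiv 0$ in $\Omega$ and hence $u_{\Omega,f}\geq 0$ in $\Omega$.

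The main subtlety lies in the truncation step, namely the claim that $u^-\in W^{1,\Phi}_0(\Omega)$ together with the chain rule $\nabla u^-=-\nabla u\,\chi_{\{u<0\}}$. This rests on a Stampacchia-type result in Orlicz--Sobolev spaces, which is available here because $\Phi$ satisfies the $\Delta_2$ condition and $C_c^\infty(\Omega)$ is dense in $W^{1,\Phi}_0(\Omega)$ by definition: one approximates $u$ by smooth compactly supported functions, truncates them, and passes to the limit in the $W^{1,\Phi}$ norm using \eqref{G1}--\eqref{G2}. Although the statement is billed as a corollary of the strong maximum principle, the variational argument above is self-contained and bypasses Lemma \ref{princ.max.fuerte} entirely.
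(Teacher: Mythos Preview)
Your argument is correct and, in fact, more complete than what the paper offers: the paper states Lemma~\ref{weak.mp} without proof, merely labeling it a corollary of the strong maximum principle (Lemma~\ref{princ.max.fuerte}). As you note in your closing remark, that derivation is not immediate, since Lemma~\ref{princ.max.fuerte} already \emph{assumes} $u\ge 0$ as a hypothesis and only upgrades it to strict positivity. Your test-with-$u^-$ argument is the standard and self-contained route; it is essentially the same device the paper itself employs later in the proof of Lemma~\ref{monotonia}, where $(u_1-u_2)^+$ is used as a test function and the resulting gradient identity forces the truncation to vanish.

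One small caveat: your final appeal to the Poincar\'e inequality~\eqref{des.poinc} requires $\Omega$ to be bounded in at least one direction, which is not explicitly part of the hypotheses of Lemma~\ref{weak.mp}. In the paper's setting every domain sits inside a bounded design box $D$, so this is harmless; in full generality you could bypass Poincar\'e and conclude directly that $\nabla u^-\equiv 0$ together with $u^-\in W^{1,\Phi}_0(\Omega)$ forces $u^-\equiv 0$ on each connected component.
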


\begin{lema}[Comparison] \label{comparison}
Let $u,v\in W^{1,\Phi}(\Omega)$ be such that
\begin{itemize}
\item[(i)] $v\geq u$ in $\partial \Omega$ 
\item[(ii)]  for any nonnegative $\psi\in C_c^\infty(\Omega)$
$$\int_\Omega \frac{\phi(|\nabla v|)}{|\nabla v|}\nabla v \cdot \nabla \psi\,dx \geq \int_\Omega \frac{\phi(|\nabla u|)}{|\nabla u|}\nabla u \cdot \nabla \psi\,dx.
$$
\end{itemize}
Then $v\geq u$ in $\overline\Omega$.
\end{lema}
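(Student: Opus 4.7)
The plan is to carry out the classical comparison argument by testing (ii) against $w := (u-v)_+$. First I would observe that hypothesis (i), interpreted in the trace sense, is equivalent to $w \in W^{1,\Phi}_0(\Omega)$. Consequently $w$ can be approximated in the $W^{1,\Phi}$-norm by a sequence of nonnegative functions $\psi_k \in C_c^\infty(\Omega)$ (take $\min(w,k)$, mollify, and cut off near $\partial\Omega$; all of these preserve nonnegativity and converge thanks to the $\Delta_2$ condition implied by \eqref{cond}).

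Next, I would insert each $\psi_k$ into hypothesis (ii) and pass to the limit. This passage is justified because Lemma \ref{lemita} places the vector fields $\phi(|\nabla u|)\nabla u/|\nabla u|$ and $\phi(|\nabla v|)\nabla v/|\nabla v|$ in $L^{\Phi^*}(\Omega)^n$, so the corresponding maps $\psi \mapsto \int \phi(|\nabla \cdot|)\nabla \cdot \cdot \nabla \psi/|\nabla \cdot |\,dx$ are continuous on $W^{1,\Phi}_0(\Omega)$ by the Hölder-type inequality for Orlicz spaces together with \eqref{Young}. The outcome is
$$
\int_\Omega \left(\frac{\phi(|\nabla v|)}{|\nabla v|}\nabla v - \frac{\phi(|\nabla u|)}{|\nabla u|}\nabla u\right) \cdot \nabla w\,dx \geq 0.
$$
Since $\nabla w = 0$ a.e. on $\{u \leq v\}$ and $\nabla w = \nabla u - \nabla v$ a.e. on $\{u > v\}$, this is equivalent to
$$
\int_{\{u>v\}} \left(\frac{\phi(|\nabla u|)}{|\nabla u|}\nabla u - \frac{\phi(|\nabla v|)}{|\nabla v|}\nabla v\right) \cdot (\nabla u - \nabla v)\,dx \leq 0.
$$

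To conclude, I would invoke the refined monotonicity of Lemma \ref{lema4.1}, which bounds the integrand from below by $C\,\Phi(|\nabla u - \nabla v|)$. Combined with the previous display, this forces $\Phi(|\nabla u - \nabla v|) = 0$ a.e. on $\{u > v\}$, hence $\nabla u = \nabla v$ a.e. there and $\nabla w = 0$ a.e. in $\Omega$. The Poincaré inequality of Lemma \ref{poincare} applied to $w \in W^{1,\Phi}_0(\Omega)$ then gives $w \equiv 0$, i.e., $v \geq u$ in $\Omega$, which is the desired conclusion.

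The main obstacle will be the density/limit argument: one must construct a nonnegative smooth approximation of $w$ in $W^{1,\Phi}_0(\Omega)$ and pass to the limit on both sides of (ii). This relies essentially on the $\Delta_2$ condition for $\Phi$ and $\Phi^*$ to reconcile modular with norm convergence, and on the continuity of the Nemitsky operator $\xi \mapsto \phi(|\xi|)\xi/|\xi|$ from $L^\Phi$ to $L^{\Phi^*}$. Once this is in place, the rest is standard monotone-operator algebra, with the refined monotonicity of Lemma \ref{lema4.1} doing the essential work of upgrading nonstrict monotonicity to strict equality of the gradients.
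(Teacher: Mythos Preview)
Your argument is correct and is precisely the standard route. The paper itself does not spell out a proof of Lemma~\ref{comparison}; it merely records the statement after the strong maximum principle, and the very same test-function computation you outline is carried out verbatim in the paper's proof of Lemma~\ref{monotonia} (test with $(u-v)_+$, use the monotonicity of Lemma~\ref{lema4.1} to force $\nabla(u-v)_+=0$, then conclude $(u-v)_+\equiv 0$). So your proposal matches the paper's implicit approach.

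One small remark: invoking the refined inequality \eqref{des2} of Lemma~\ref{lema4.1} brings in the extra convexity hypothesis \eqref{condC}, which the statement of Lemma~\ref{comparison} does not assume. You can avoid this by using only \eqref{des1}: the integrand is pointwise nonnegative and integrates to a nonpositive number, hence vanishes a.e.; strict monotonicity of $t\mapsto \phi(t)$ (guaranteed by \eqref{cond}) then forces $\nabla u=\nabla v$ a.e.\ on $\{u>v\}$. This is a cosmetic point and does not affect the validity of your plan.
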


\subsection{Existence of solutions}
We prove some basic result on existence of solutions of the Dirichlet problem for the so-called  $\phi-$Laplacian, which, given a Young function $\Phi$ is defined as 
$$
\Delta_\phi(u):=\diver\left( \frac{\phi(|\nabla u|)}{|\nabla u|} \nabla u\right)
$$ 
where $\phi=\Phi'$. More precisely, given $f\in L^{\Phi^*}(\Omega)$ and an open set $\Omega\subset \R^n$, we are interested in obtaining existence of solutions of the following Dirichlet problem
\begin{equation}\label{P}
\begin{cases}
-\Delta_{\phi} u= f & \text{in } \Omega,\\
u=0 & \text{in } \partial \Omega.
\end{cases}
\end{equation}

\begin{defi}\label{sd}
A function $u=u_{\Omega,f}\in W^{1,\Phi}_0(\Omega)$ is a \emph{weak solution} of  \eqref{P} if
\begin{equation}\label{debil}
\int_\Omega \frac{\phi(|\nabla u|)}{|\nabla u|}\nabla u \cdot \nabla v\, dx=\int_\Omega f v\, dx \qquad \forall v\in W^{1,\Phi}_0(\Omega).
\end{equation}

\end{defi}

In order to study  solutions of \eqref{P} we define the functional $J_\Phi\colon W^{1,\Phi}_0(\Omega)\to \bar{\R}$ 
\begin{equation}\label{J}
J_{\Phi}(u):=\int_\Omega \Phi(|\nabla u|)\, dx- \int_\Omega fu\, dx,
\end{equation}
and the corresponding minimization problem
\begin{equation}\label{min}
\inf_{u\in W^{1,\Phi}_0(\Omega)} J_\Phi(u). 
\end{equation}

The following two results show an implication  between solutions in the weak sense  and minimizers of the functional \eqref{min}.

\begin{prop} \label{prop.c1}
The functional $\mathcal{F}\colon W^{1,\Phi}_0(\Omega)\to \R$ defined as $\mathcal{F}(u)=\int_\Omega \Phi(|\nabla u|)\,dx$ is class $C^1$ and its Fr\'echet derivative $\mathcal{F}'\colon W^{1,\Phi}_0(\Omega)\to  (W^{1,\Phi}_0(\Omega))'$ satisfies that
$$
\langle \mathcal{F}'(v),v \rangle = \int_\Omega \frac{\phi(|\nabla u|)}{|\nabla u|} \nabla u\cdot \nabla v\,dx.
$$
\end{prop}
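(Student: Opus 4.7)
The plan is to first establish Gateaux differentiability of $\mathcal{F}$ at every $u\in W^{1,\Phi}_0(\Omega)$ with the proposed formula, and then to upgrade the conclusion to $C^1$-Fr\'echet differentiability by showing that the Gateaux derivative $u\mapsto \mathcal{F}'(u)$ is continuous as a map from $W^{1,\Phi}_0(\Omega)$ into its dual (a standard functional-analytic fact then yields Fr\'echet differentiability and $C^1$).

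For Gateaux differentiability, fix $u,v\in W^{1,\Phi}_0(\Omega)$ and $0<|t|\le 1$. I would write
\begin{equation*}
\frac{\mathcal{F}(u+tv)-\mathcal{F}(u)}{t}=\int_\Omega \frac{\Phi(|\nabla u+t\nabla v|)-\Phi(|\nabla u|)}{t}\,dx,
\end{equation*}
apply the one-dimensional mean value theorem to $s\mapsto \Phi(|\nabla u+s\nabla v|)$ to represent the integrand as $\phi(|\nabla u+s_t\nabla v|)\cdot (|\nabla u+t\nabla v|-|\nabla u|)/t$ for some $s_t$ between $0$ and $t$, and observe that as $t\to 0$ the integrand converges a.e.\ to $\phi(|\nabla u|)\nabla u\cdot \nabla v/|\nabla u|$ on $\{\nabla u\neq 0\}$ (the set $\{\nabla u=0\}$ contributes nothing in the limit, since $\phi(0)=0$). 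To apply dominated convergence, I would dominate the integrand using Young's inequality \eqref{Young} together with Lemma \ref{lemita} and the $\Delta_2$ condition \eqref{G2}:
\begin{equation*}
\phi(|\nabla u+s_t\nabla v|)\,|\nabla v|\le \Phi^*(\phi(|\nabla u+s_t\nabla v|))+\Phi(|\nabla v|)\le (p^++1)\mathbf{C}(\Phi(|\nabla u|)+\Phi(|\nabla v|))+\Phi(|\nabla v|),
\end{equation*}
which is integrable since $u,v\in W^{1,\Phi}_0(\Omega)$. This yields the claimed formula for the Gateaux derivative.

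To prove continuity of $u\mapsto \mathcal{F}'(u)$, suppose $u_n\to u$ in $W^{1,\Phi}_0(\Omega)$ and set $T(w):=\phi(|w|)w/|w|$ (with $T(0):=0$). Then by H\"older's inequality in Orlicz spaces,
\begin{equation*}
\|\mathcal{F}'(u_n)-\mathcal{F}'(u)\|_{(W^{1,\Phi}_0(\Omega))'}\le 2\,\|T(\nabla u_n)-T(\nabla u)\|_{L^{\Phi^*}(\Omega)},
\end{equation*}
so it suffices to show that $T(\nabla u_n)\to T(\nabla u)$ in $L^{\Phi^*}(\Omega)$. A standard subsequence argument reduces the problem to extracting a sub-subsequence along which convergence holds; along such a subsequence $\nabla u_n\to \nabla u$ a.e.\ and, by continuity of $T$, also $T(\nabla u_n)\to T(\nabla u)$ a.e. Since $\Phi^*$ satisfies $\Delta_2$, by Proposition \ref{teo.rr} it is enough to verify the modular convergence $\int_\Omega \Phi^*(|T(\nabla u_n)|)\,dx\to \int_\Omega \Phi^*(|T(\nabla u)|)\,dx$. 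Lemma \ref{lemita} gives the pointwise bound $\Phi^*(|T(\nabla u_n)|)=\Phi^*(\phi(|\nabla u_n|))\le (p^++1)\Phi(|\nabla u_n|)$, and strong convergence $\nabla u_n\to\nabla u$ in $L^\Phi(\Omega)$ implies modular convergence $\int_\Omega \Phi(|\nabla u_n|)\,dx\to \int_\Omega \Phi(|\nabla u|)\,dx$ together with $\Phi(|\nabla u_n|)\to \Phi(|\nabla u|)$ a.e. The generalized dominated convergence theorem (with the variable dominating sequence $(p^++1)\Phi(|\nabla u_n|)$) then delivers the required modular convergence. The main technical obstacle is precisely this last step — pushing the $L^\Phi$ convergence of $\nabla u_n$ through the nonlinearity $T$ to obtain $L^{\Phi^*}$ convergence of $T(\nabla u_n)$ — which amounts to the continuity of the Nemitskii operator associated to $T$ in the Orlicz setting and is the one place where the $\Delta_2$ assumption on both $\Phi$ and $\Phi^*$ (guaranteed by \eqref{cond}) is indispensable.
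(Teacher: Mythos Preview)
Your proposal is correct and follows essentially the same route as the paper: compute the Gateaux derivative via dominated convergence (using Lemma~\ref{lemita} to place $\phi(|\nabla w|)$ in $L^{\Phi^*}$ and get an integrable majorant), then upgrade to $C^1$ by showing $T(\nabla u_n)\to T(\nabla u)$ in $L^{\Phi^*}$ through a.e.\ convergence along subsequences combined with Proposition~\ref{teo.rr}. Your continuity argument via the generalized dominated convergence theorem is in fact cleaner and more explicit than the paper's (which invokes Egoroff somewhat loosely before appealing to Proposition~\ref{teo.rr}); the only cosmetic slip is that the intermediate point produced by the mean value theorem applied to $r\mapsto\Phi(r)$ is some $\xi$ between $|\nabla u|$ and $|\nabla u+t\nabla v|$, not literally of the form $|\nabla u+s_t\nabla v|$, but your subsequent bounds and pointwise limit are unaffected.
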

\begin{proof}
Given $u,v \in W^{1,\Phi}_0(\Omega)$ and $t>0$ 
$$
\frac{f(u+tv)-g(u)}{t} = \int_\Omega \frac1t \int_{|\nabla u|}^{|\nabla u + t \nabla v|} \phi(s)\,ds \,dx
$$
and, as $t\to 0$, $|\nabla u + t \nabla v| \to |\nabla v|$ as $t\to 0$ in $L_\Phi$ and hence in $L^1$, thus, up to a subsequence we may assume convergence almost everywhere. Moreover, since $g$ is increasing, for $t$ small we get
$$
\left| \frac1t \int_{|\nabla u|}^{|\nabla u + t \nabla v|} \phi(s)\,ds \right| \leq \phi(|\nabla u| + |\nabla v|)|\nabla v|.
$$
We claim that $\phi(|\nabla w|)\in L^{\Phi^*}(\Omega)$ for all $w\in W^{1,\Phi}_0(\Omega)$. Indeed, using \eqref{xxxx} and the fact that $\phi^{-1}$ is increasing we obtain that
\begin{align*}
\int_\Omega \Phi^*(|\nabla w|) &= \int_\Omega \left(\int_0^{\phi(|\nabla w|)} \phi^{-1}(s)\,ds  \right) \,dx\\
&\leq \int_\Omega \phi^{-1}( \phi(|\nabla w|) )\phi(|\nabla w|)\,dx\\
&\leq (p^++1) \int_\Omega \Phi(|\nabla w|)\,dx.
\end{align*}
Then, since $u,v\in W^{1,\Phi}_0(\Omega)$ we get that $\phi(|\nabla u+\nabla v|)|\nabla v| \in L^1(\Omega)$. Thus, by dominated convergence theorem, 
$$
\langle \mathcal{F}'(u),v \rangle = \lim_{t\to 0} \frac{\mathcal{F}(u+tv)-\mathcal{F}(v)}{t}= \frac{d}{dt}\mathcal{F}(u+tv)\big|_{t=0} = \int_\Omega \frac{\phi(|\nabla u|)}{|\nabla u|} \nabla u\cdot \nabla v\,dx.
$$
Let us see now that $\mathcal{F}'$ is continuous. Let $\{u_k\}_{j\in\N}\subset W^{1,\Phi}_0(\Omega)$ be such that $u_k\to u$ and observe that
\begin{align*}
|\langle \mathcal{F}'(u_k) - \mathcal{F}(u),v\rangle| = \left| \int_\Omega \left(\frac{\phi(\nabla u)}{|\nabla u|} \nabla u - \frac{\phi(\nabla u_k)}{|\nabla u_k|} \nabla u_k \right)\nabla v\,dx \right|,
\end{align*}
then by using Egoroff's Theorem there exists a positive sequence $\delta_k\to 0$ such that
\begin{align*}
\sup_{\|v\|_{1,\Phi}\leq 1}  \int_\Omega &\left(\frac{\phi(\nabla u)}{|\nabla u|} \nabla u - \frac{\phi(\nabla u_k)}{|\nabla u_k|} \nabla u_k \right)\nabla v\,dx\\
 &\leq \left\| \frac{\phi(\nabla u)}{|\nabla u|} \nabla u - \frac{\phi(\nabla u_k)}{|\nabla u_k|} \nabla u_k \right\|_{L^{\Phi^*}(\Omega)} + \delta_k
\end{align*}
where we have used H\"older's  inequality for Orlicz spaces. Now, since $\Phi^*$ satisfies \eqref{g.s.1}, by Proposition \ref{teo.rr} we get
$$
\| \phi(\nabla u)- \phi(\nabla u_k)\|_{L^{\Phi^*}(\Omega)}\to 0
$$
and therefore $\| \mathcal{F}'(u_k) - \mathcal{F}'(u)\|_{L^{\Phi^*}(\Omega)}\to 0$.
\end{proof}

\begin{teo} \label{teo.existencia}
Let $\Omega\subset\R^n$ be an open and  bounded set, and let $\Phi$ be a Young function satisfying \eqref{cond}, then there a exists solution for the minimization problem
\begin{equation} \label{minim}
\inf_{W^{1,\Phi}_0(\Omega)}J_\Phi.
\end{equation}
Moreover, if the Young function $\Phi$ is strictly convex, then the solution is unique.
\end{teo}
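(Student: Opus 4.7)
I would prove existence via the \emph{direct method of the calculus of variations}, relying on three ingredients: reflexivity of $W^{1,\Phi}_0(\Omega)$ under \eqref{cond}, the Poincaré inequality (Lemma \ref{poincare}) that turns $\|\nabla\cdot\|_{L^\Phi(\Omega)}$ into an equivalent norm on $W^{1,\Phi}_0(\Omega)$, and the weak lower semicontinuity of the convex integrand $\Phi(|\nabla u|)$.

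For \emph{coercivity and boundedness below}, I would estimate the linear term of $J_\Phi$ through Young's inequality \eqref{Young} applied to $fu=(\lambda u)(\lambda^{-1}f)$ with a small parameter $\lambda>0$, obtaining
$$
\int_\Omega fu\,dx \leq \int_\Omega \Phi(\lambda|u|)\,dx + \int_\Omega \Phi^*(\lambda^{-1}|f|)\,dx.
$$
The scaling bounds \eqref{G1} and \eqref{g.s.1} let me pull $\lambda$ out of $\Phi$ and $\Phi^*$, and Poincaré then controls $\int_\Omega \Phi(|u|)\,dx$ by $\int_\Omega \Phi(|\nabla u|)\,dx$. Choosing $\lambda$ small enough to absorb the gradient piece yields
$$
J_\Phi(u) \geq \tfrac12 \int_\Omega \Phi(|\nabla u|)\,dx - C(f),
$$
and \eqref{G1} guarantees that $\int_\Omega \Phi(|\nabla u|)\,dx \to\infty$ as $\|\nabla u\|_{L^\Phi(\Omega)}\to\infty$. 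Consequently any minimizing sequence $\{u_k\}\subset W^{1,\Phi}_0(\Omega)$ is norm-bounded and, by reflexivity, admits a weak limit $u^*\in W^{1,\Phi}_0(\Omega)$ along a subsequence.

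The functional $\mathcal{F}(u):=\int_\Omega\Phi(|\nabla u|)\,dx$ is convex (as the integral of a composition of the convex nondecreasing $\Phi$ with the Euclidean norm) and strongly continuous by Proposition \ref{prop.c1}, hence weakly lower semicontinuous on $W^{1,\Phi}_0(\Omega)$; meanwhile $u\mapsto \int_\Omega fu\,dx$ is weakly continuous because $f$ lies in the dual space. Passing to the liminf gives $J_\Phi(u^*)\leq \liminf_k J_\Phi(u_k)=\inf_{W^{1,\Phi}_0(\Omega)} J_\Phi$, so $u^*$ is a minimizer. For uniqueness under strict convexity of $\Phi$, I would argue that the integrand $a\mapsto\Phi(|a|)$ is itself strictly convex on $\R^n$: along rays from the origin this is inherited directly from $\Phi$, and for pairs of vectors not on a common ray the strict convexity of the Euclidean norm combined with the (automatic) strict monotonicity of $\Phi$ provides the strict inequality. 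Hence $\mathcal{F}$, and therefore $J_\Phi$, is strictly convex on $W^{1,\Phi}_0(\Omega)$ and admits at most one minimizer.

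The main obstacle I expect is the coercivity estimate: because $\Phi$ is not homogeneous one cannot scale $u$ to extract a clean norm-power bound of the form $\int\Phi(|\nabla u|)\geq C\|u\|^p$, so the balance between $\Phi(\lambda|u|)$ in Young's inequality and the Poincaré constant has to be managed via the two-sided power-type comparisons \eqref{G1}--\eqref{g.s.1} rather than a single exponent, distinguishing cases according to whether $\lambda$ exceeds or falls below the thresholds appearing in those estimates.
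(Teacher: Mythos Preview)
Your proposal is correct and follows essentially the same route as the paper: the direct method with coercivity obtained from Young's inequality \eqref{Young} (with a small parameter) combined with the Poincar\'e inequality of Lemma \ref{poincare}, reflexivity of $W^{1,\Phi}_0(\Omega)$ to extract a weak limit, weak lower semicontinuity of the modular, and strict convexity for uniqueness. The only cosmetic differences are that the paper cites an external reference for weak lower semicontinuity rather than deducing it from convexity plus continuity, and that you spell out more carefully why $a\mapsto\Phi(|a|)$ is strictly convex on $\R^n$, whereas the paper writes the midpoint inequality for $J_\Phi$ directly.
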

\begin{proof}
Let $\{ u_k \}_{k\in\N} \in W^{1,\Phi}_0(\Omega)$ be a minimizing sequence, that is 
\begin{equation*}
	\lim_{k\to \infty} J_{\Phi}(u_k)= \inf_{W^{1,\Phi}_0(\Omega)} J_{\Phi}(u),
\end{equation*}
then there exists a positive constant $C$ such that
\begin{equation*}
\int_{\Omega} \Phi(|\nabla u_k| )\, dx \leq C+\int_{\Omega} f u_k \, dx \quad  \mbox{  for all } k \in \N.
\end{equation*}
By using Young's inequality for Young functions (see \cite{Fucik-John-Kufner}), for $0<\varepsilon<1$ together with Lemma \ref{poincare} we get
\begin{align*}
	\int_{\Omega} \Phi(|\nabla u_k| )\, dx &\leq C + \int_{\Omega} f u_k \,dx\\
	&\leq C + \int_{\Omega} \Phi(\varepsilon u_k) \, dx + \int_{\Omega} \Phi^* \left( \frac{f}{\varepsilon}  \right) \,dx\\
	&\leq C + \varepsilon C\int_{\Omega} \Phi(|\nabla u_k|) \, dx + \int_{\Omega} \Phi^* \left( \frac{f}{\varepsilon}  \right) \,dx.
\end{align*}
Choosing $0<\varepsilon < \min \{ C^{-1},1 \} $, we obtain
\begin{equation*}
\int_{\Omega} \Phi (|\nabla u_k|) \, dx \leq \frac{ C + \int_{\Omega} \Phi^* \left( \frac{f}{\varepsilon}  \right) \,dx}{(1-\varepsilon C)}:=M,
\end{equation*}
and then by defining $\bar{M}= \max\{1,M\}\geq 1$ it follows that
\begin{equation*}
	\int_{\Omega} \Phi \left( \frac{|\nabla u_k|}{\bar{M}}\right) \, dx \leq \frac{1}{\bar{M}} \int_{\Omega} \Phi (|\nabla u_k|) \, dx \leq 1,
\end{equation*}
and by the definition of the Luxemburg norm we get that $\|\nabla u_k\|_{L^\Phi(\Omega)}$ is uniformly bounded by $\bar M$ for any $k\in\N$. Therefore, from the reflexivity of $W^{1,\Phi}_0(\Omega)$, 
up to  a subsequence there exists $u\in W^{1,\Phi}_0(\Omega)$ such that 
\begin{align} \label{converg}
\begin{split}
u_k\rightharpoonup u \quad &\text{ weakly in } W^{1,\Phi}(\Omega). 
\end{split}
\end{align}
Observe that by definition we have that $\inf\{ J_\Phi(u)\colon u \in W^{1,\Phi}_0(\Omega) \}\leq J_\Phi(u).$

On the other hand, by Theorem 2.2.8 in \cite{DHH} we have that that the modular is lower semicontinuous with respect to weak convergence, then from \eqref{converg} we get
\begin{align*}
\inf_{W^{1,\Phi}_0(\Omega)}J_\Phi& \geq \liminf_{k\to\infty}\int_\Omega\Phi(|\nabla u_k|)\,dx-\lim_{k\to\infty}\int_\Omega f u_k \,dx\\
&\geq  \int_\Omega\Phi(|\nabla u|)\,dx-\int_\Omega f u\,dx
= J_\Phi(u),
\end{align*}
from where $u$ minimizes \eqref{minim}.

Finally, if $\Phi$ is strictly convex and $u_1,u_2$ in $W^{1,\Phi}_0(\Omega)$ are two different solutions we have that
$$
J_\Phi\left (\frac{u_1+u_2}{2}\right)<\frac{1}{2}J_\Phi(u_1)+\frac{1}{2}J_\Phi(u_2)=\inf_{W^{1,\Phi}_0(\Omega)}J_\Phi,
$$
which is absurd. The proof is now complete.
\end{proof}

\begin{teo}
If $u\in W^{1,\Phi}_0(\Omega)$ minimizes  \eqref{minim} then $u$ solves \eqref{debil}.
\end{teo}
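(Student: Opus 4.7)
The plan is a standard Euler--Lagrange argument: since $u$ is a minimizer of $J_\Phi$ over $W^{1,\Phi}_0(\Omega)$, the scalar function $t\mapsto J_\Phi(u+tv)$ has a global minimum at $t=0$ for every fixed test function $v\in W^{1,\Phi}_0(\Omega)$, so its derivative at $0$ must vanish. This will give precisely the weak formulation \eqref{debil}.

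More precisely, I would first fix an arbitrary $v\in W^{1,\Phi}_0(\Omega)$ and split the functional as $J_\Phi(u+tv)=\mathcal{F}(u+tv)-\int_\Omega f(u+tv)\,dx$, where $\mathcal{F}(w)=\int_\Omega \Phi(|\nabla w|)\,dx$ is the functional treated in Proposition \ref{prop.c1}. The linear part is trivial: since $f\in L^{\Phi^*}(\Omega)$, H\"older's inequality for Orlicz spaces ensures that the map $w\mapsto \int_\Omega fw\,dx$ is continuous and linear on $W^{1,\Phi}_0(\Omega)$, hence its $t$-derivative at $0$ equals $\int_\Omega fv\,dx$. For the nonlinear part, Proposition \ref{prop.c1} asserts that $\mathcal{F}$ is of class $C^1$ on $W^{1,\Phi}_0(\Omega)$ with Fr\'echet derivative
\begin{equation*}
\langle \mathcal{F}'(u),v\rangle = \int_\Omega \frac{\phi(|\nabla u|)}{|\nabla u|}\,\nabla u\cdot \nabla v\,dx.
\end{equation*}

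Combining these two facts, the map $t\mapsto J_\Phi(u+tv)$ is differentiable at $t=0$ with
\begin{equation*}
\frac{d}{dt}J_\Phi(u+tv)\Big|_{t=0}=\int_\Omega \frac{\phi(|\nabla u|)}{|\nabla u|}\,\nabla u\cdot \nabla v\,dx-\int_\Omega fv\,dx.
\end{equation*}
Since $u$ minimizes $J_\Phi$, this derivative must be zero, which yields
\begin{equation*}
\int_\Omega \frac{\phi(|\nabla u|)}{|\nabla u|}\,\nabla u\cdot \nabla v\,dx=\int_\Omega fv\,dx\qquad\forall v\in W^{1,\Phi}_0(\Omega),
\end{equation*}
i.e.\ \eqref{debil} holds. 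The only genuine analytic work is encapsulated in Proposition \ref{prop.c1} (differentiability of the modular-type functional $\mathcal{F}$, which needed a dominated-convergence argument together with $\phi(|\nabla w|)\in L^{\Phi^*}(\Omega)$ via Lemma \ref{lemita}); once that is available the present statement follows by a few lines of bookkeeping, so I expect no further obstacle.
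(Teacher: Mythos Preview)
Your proof is correct and follows precisely the paper's approach: the paper's argument is literally three lines, invoking Proposition~\ref{prop.c1} to assert that $\frac{d}{dt}J_\Phi(u+tv)\big|_{t=0}=0$ and concluding \eqref{debil}. Your version merely makes explicit the (trivial) differentiability of the linear term $t\mapsto\int_\Omega f(u+tv)\,dx$, which the paper leaves implicit.
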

\begin{proof}
Let $u\in W^{1,\Phi}_0(\Omega)$ be a solution for  problem \eqref{minim}, then in  light of  Proposition \ref{prop.c1}, for $v\in W^{1,\Phi}_0(\Omega)$ we have
$$
\frac{d}{dt}J_\Phi(u+tv)\Big|_{t=0}=0,
$$
from where we get
\eqref{debil}.
\end{proof}

\subsection{Orlicz-Sobolev capacity}\label{capacidad}
In this subsection we introduce some definitions and basic facts on capacities in Orlicz-Sobolev spaces. For further information we refer for instance to \cite{DHH2,Ohno-Tetsu}. Thought this section $\Phi$ stands for a Young function. 
 
Given $E\subset \R^n$ we consider the set
$$
S_\Phi (E) =\{u \in W^{1,\Phi}(\R^n)\colon u\geq 1 \text{ in an open set containing }E \}.
$$

The \emph{Sobolev capacity} is defined by
$$
\cp(E)= \inf_{u\in S_{\Phi}(E)}  \rho_\Phi(u),
$$
where, for $u\in W^{1,\Phi}(\R^n)$ we denote
$$
\rho_\Phi(u) = \int_{\R^n}  \Phi(|u|) \,dx +   \int_{\R^n}  \Phi(|\nabla u|) \,dx.
$$
In case that $S_{\Phi} (E)=\emptyset$, we set $\cp(E)=\infty$.

It is well-known that when dealing with pointwise properties of Sobolev functions, the concept of almost everywhere needs to be changed to quasi everywhere.  

We say that a property holds \emph{$\Phi-q.e$ (quasi everywhere)} in $\Omega$, if it holds except of a set $F\subset \Omega$ such that $\cp(F)=0$.

 A function u is \emph{$\Phi-$quasicontinuous} on $\Omega$ if, for any $\varepsilon >0$, there is an open set $E$ such that $\cp(E)<\varepsilon$ and $u|_{\Omega\setminus E}$ is continuous.

The following lemma will be useful to our proof in Section \ref{continuidad}.
\begin{lema}\cite{DHH}[Section 11.1.11] \label{prop.c}
Let $v_k\to v$ in $W^{1,\Phi}_0(D)$. Then, up to a subsequence, $\tilde v_k \to \tilde v$ $\Phi-$q.e.
\end{lema}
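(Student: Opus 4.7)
\bigskip
\noindent\textbf{Proof plan.} The plan is to produce, after passing to a subsequence, a capacity-null exceptional set off which $\tilde v_k(x)\to \tilde v(x)$ pointwise, by combining a rapid modular decay of the differences $v_k-v$ with the layer-cake style estimate $\cp(\{|\tilde u|>\lambda\}) \le \rho_\Phi(u/\lambda)$.

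First I would convert norm convergence to modular convergence. Since $\Phi$ satisfies \eqref{cond}, in particular the $\Delta_2$ condition \eqref{G2}, the Luxemburg norm topology on $W^{1,\Phi}_0(D)$ coincides with the modular topology, so $v_k\to v$ in $W^{1,\Phi}_0(D)$ gives $\rho_\Phi(v_k-v)\to 0$ (with the functions extended by $0$ to $\R^n$, which is legitimate for $W^{1,\Phi}_0$ functions). Then I would extract a subsequence, still denoted $v_k$, for which
\begin{equation*}
\rho_\Phi(v_k-v) \le 2^{-k(p^++2)},
\end{equation*}
and write $w_k = v_k-v$.

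Second, for each $k$ let $E_k=\{x\in\R^n : |\tilde w_k(x)|>2^{-k}\}$. The key capacity estimate is
\begin{equation*}
\cp(E_k) \le \rho_\Phi(2^{k} w_k).
\end{equation*}
To justify this, note that $E_k$ is $\Phi$-quasi-open because $\tilde w_k$ is $\Phi$-quasicontinuous; by standard quasi-open approximation one finds open sets $U_k\supset E_k$ with $\cp(U_k)\to \cp(E_k)$, and the function $2^{k}|w_k|$ (suitably modified on the small capacity difference) belongs to $S_\Phi(U_k)$, so testing the definition of $\cp$ with $2^{k}|w_k|$ produces the bound. Now applying property \eqref{G1} with $a=2^{k}\ge 1$ so that $\max\{a^{p^-+1},a^{p^++1}\}=2^{k(p^++1)}$, and using $\nabla(2^kw_k)=2^k\nabla w_k$, I obtain
\begin{equation*}
\cp(E_k) \le 2^{k(p^++1)} \rho_\Phi(w_k) \le 2^{-k}.
\end{equation*}

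Third, a Borel--Cantelli argument using the countable subadditivity of $\cp$ gives, with $F_m:=\bigcup_{k\ge m}E_k$ and $F:=\bigcap_{m}F_m$,
\begin{equation*}
\cp(F_m) \le \sum_{k\ge m} 2^{-k} = 2^{-m+1}, \qquad \cp(F)=0.
\end{equation*}
For any $x\notin F$ there exists $m=m(x)$ with $x\notin E_k$ for all $k\ge m$, that is $|\tilde w_k(x)|\le 2^{-k}$, so $\tilde v_k(x)\to \tilde v(x)$. Finally I would note that $\widetilde{v_k-v}=\tilde v_k-\tilde v$ holds $\Phi$-q.e.\ by uniqueness of the quasicontinuous representative modulo sets of zero $\Phi$-capacity, which is what allows identifying $\tilde w_k$ with $\tilde v_k-\tilde v$ in the argument above.

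The main obstacle is the capacity estimate $\cp(\{|\tilde u|>\lambda\})\le \rho_\Phi(u/\lambda)$, since the definition of $\cp$ here uses admissible functions that are $\ge 1$ on an \emph{open} neighborhood of the target set, while level sets of quasicontinuous representatives are only quasi-open; the technical part is therefore the open-set approximation combined with a small capacity-cost truncation of $2^k|w_k|$ so that it qualifies as an admissible competitor. Everything else is a standard Borel--Cantelli argument, whose implementation depends only on the one-sided $\Delta_2$-type control \eqref{G1} furnished by \eqref{cond}.
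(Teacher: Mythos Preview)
The paper does not actually prove this lemma: it is stated with a citation to \cite{DHH}, Section 11.1.11, and no argument is given in the manuscript. So there is no ``paper's own proof'' to compare against; the authors simply import the result from the reference.

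Your proposal follows precisely the standard route used to establish this kind of result (and is almost certainly what the cited reference does): pass to a subsequence with geometrically fast modular decay, use the Chebyshev-type capacity estimate $\cp(\{|\tilde u|>\lambda\})\le \rho_\Phi(u/\lambda)$, and run a Borel--Cantelli argument with the countable subadditivity of $\cp$ (which is exactly Lemma~\ref{prop.capa}(iii) in the paper). The use of \eqref{G1} to control $\rho_\Phi(2^k w_k)$ by $2^{k(p^++1)}\rho_\Phi(w_k)$ is correct and is where the $\Delta_2$ condition enters. You have also correctly isolated the only genuinely delicate point: the admissible functions in the definition of $\cp$ must be $\ge 1$ on an \emph{open} neighborhood, while $\{|\tilde w_k|>2^{-k}\}$ is only quasi-open. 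This is resolved in the standard way---either by invoking the outer regularity of $\cp$ with respect to quasi-open sets, or (more concretely) by adding to $2^k|w_k|$ a small-energy function that opens up the exceptional set of the quasicontinuous representative---and it is the one place where your sketch would need a couple of extra lines to become a full proof.
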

We give now a characterization of $W^{1,\Phi}_0(\Omega)$ as the restriction of quasicontinuous functions vanishing quasi everywhere on $\partial\Omega$. 

\begin{prop}\cite[Theorem A.13]{Ohno-Tetsu}  \label{eqiv.espacios}
Let $D\subset \R^n$ be open and   $\Omega\subset D$. Then the following is equivalent
\begin{itemize}
\item[(i)] $u\in W^{1,\Phi}_0(\Omega)$
\item[(ii)] there exists a $\Phi-$quasicontinuous function $\tilde u\in W^{1,\Phi}(D)$ such that  $\tilde u=u$ a.e. $\Omega$ and  $\tilde u=0$ $\Phi-$q.e on $D \setminus \Omega$.
\end{itemize}

Therefore, the space $W^{1,\Phi}_0(\Omega)$ is endowed with the norm 
$$
\| u\|_{W^{1,\Phi}_0(\Omega)}=\| \tilde u\|_{W^{1,\Phi}(\R^n)}.
$$
\end{prop}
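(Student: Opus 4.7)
The plan is to prove each implication separately. The forward direction (i)$\Rightarrow$(ii) is a direct application of the Cauchy-in-capacity property captured by Lemma \ref{prop.c}, while the reverse implication (ii)$\Rightarrow$(i) is the core of the statement and requires a quasicontinuous truncation together with a capacitary cutoff. Throughout I would rely on the $\Delta_2$ condition on both $\Phi$ and $\Phi^*$ so that norm and modular convergence coincide and the elementary operations used below preserve $W^{1,\Phi}$ regularity.

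For (i)$\Rightarrow$(ii), take $u_k\in C_c^\infty(\Omega)$ with $u_k\to u$ in $W^{1,\Phi}(\Omega)$ and extend each $u_k$ by zero to all of $D$. The extended sequence converges in $W^{1,\Phi}(D)$ to the zero-extension $\tilde u$ of $u$. Since each $u_k$ is continuous (hence its own $\Phi$-quasicontinuous representative), Lemma \ref{prop.c} produces a subsequence with $u_k\to\tilde u$ $\Phi$-q.e., which both identifies $\tilde u$ as a quasicontinuous representative and forces $\tilde u=0$ $\Phi$-q.e.\ on $D\setminus\Omega$ because each $u_k$ vanishes identically there.

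For (ii)$\Rightarrow$(i) I would reduce the problem in three steps. First, $\tilde u=\tilde u^+-\tilde u^-$ and these positive/negative parts remain quasicontinuous, keep their $W^{1,\Phi}(D)$ regularity and still vanish $\Phi$-q.e.\ on $D\setminus\Omega$, so one may assume $\tilde u\geq 0$. Second, the truncations $\tilde u_M:=\min(\tilde u,M)$ satisfy $\tilde u_M\to \tilde u$ in $W^{1,\Phi}(D)$ by dominated convergence applied to the modular together with the $\Delta_2$ property, so one may further assume $0\le \tilde u\le M$. Third, the shifted truncations $w_\varepsilon:=(\tilde u-\varepsilon)_+$ converge to $\tilde u$ in $W^{1,\Phi}(D)$ as $\varepsilon\to 0^+$ by the same modular argument; hence it suffices to approximate every such $w_\varepsilon$ by $C_c^\infty(\Omega)$ functions.

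For this last step I would invoke the definition of $\cp$. Given $\eta>0$, quasicontinuity yields an open set $E\subset D$ with $\cp(E)<\eta$ on whose complement $\tilde u$ is continuous and identically zero on $D\setminus\Omega$; then $\{\tilde u\ge \varepsilon\}\setminus E$ is relatively closed in $D\setminus E$ and contained in $\Omega$, so its closure in $D$ is contained in $\overline{\Omega\cup E}$. Picking $\chi\in S_\Phi(E)$ with $\rho_\Phi(\chi)<\eta$ from the infimum defining $\cp(E)$, the function $(1-\chi)_+\, w_\varepsilon$ has support in a compact subset of $\Omega$, lies in $W^{1,\Phi}$ by the chain and product rules, and differs from $w_\varepsilon$ by $\min(\chi,1)\cdot w_\varepsilon$, whose $W^{1,\Phi}$ norm can be bounded by a power of $\eta$ using H\"older's inequality in Orlicz spaces, the boundedness of $\tilde u$, and the estimates \eqref{G1} and \eqref{g.s.1}. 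Letting $\eta\to 0$ and mollifying produces the required sequence in $C_c^\infty(\Omega)$. The main obstacle is precisely this quantitative estimate on the cutoff error: verifying that a capacitary cutoff of small Orlicz-Sobolev content actually induces a small perturbation in the Orlicz-Sobolev \emph{norm} (not just the modular) requires the full strength of the $\Delta_2$ conditions and a careful expansion of $\nabla(\min(\chi,1) w_\varepsilon)$, and is where the proof becomes genuinely technical.
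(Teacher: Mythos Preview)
The paper does not prove this proposition at all: it is simply quoted from \cite[Theorem A.13]{Ohno-Tetsu}, so there is no ``paper's own proof'' to compare against. Your outline follows the classical Hedberg-type strategy used in the cited reference and in the $p$-Laplacian literature, and the overall architecture (reduction to nonnegative bounded functions, $\varepsilon$-truncation, capacitary cutoff) is the standard one.

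That said, two technical points in your sketch would need tightening before this becomes a proof. First, the claim that $(1-\chi)_+\,w_\varepsilon$ has support in a \emph{compact} subset of $\Omega$ is not justified as written: from the quasicontinuity of $\tilde u$ on $D\setminus E$ and $\tilde u=0$ on $(D\setminus\Omega)\setminus E$ you get that the support is a relatively closed subset of $D$ contained in $\Omega$, but compactness requires also controlling what happens near $\partial D$; this is where one typically works in $W^{1,\Phi}(\R^n)$ rather than $W^{1,\Phi}(D)$ (note the norm in the statement is taken over $\R^n$) or assumes $D$ bounded and $\tilde u\in W^{1,\Phi}_0(D)$. Second, the error estimate on $\min(\chi,1)\,w_\varepsilon$ does not follow from H\"older's inequality alone: $\rho_\Phi(\chi)<\eta$ controls the \emph{modular} of $\chi$ and $\nabla\chi$, but the cross term $w_\varepsilon\,\nabla\chi$ has $L^\Phi$ norm comparable to $M\|\nabla\chi\|_{L^\Phi}$, and smallness of the modular only gives smallness of the norm via the $\Delta_2$ condition and \eqref{G1}, which you should make explicit. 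These are exactly the ``genuinely technical'' issues you flag in your last sentence; the cited paper handles them, but your write-up as it stands leaves real gaps there.
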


We define now the notion of relative capacity in this settings. 

Let $\Omega\subset \R^n$, $K\subset \Omega$ be compact.  Denote
$$
R_\Phi(K,\Omega):= \{ u \in W^{1,\Phi}(\Omega)\cap C(\Omega)\colon u>1 \text{ in } K \text{ and } u\geq 0\}.
$$
We define
$$
\text{cap}^*_\Phi (K,\Omega):=\inf_{ u \in R_\Phi(K,\Omega)} \rho_{\Phi,\Omega} (|\nabla u|).
$$
Further, for $U\subset \Omega$ open, we set
$$
\cp(U,\Omega):=\sup_{K\subset U, \, K \text{ compact}} \text{cap}^*_\Phi(K,\Omega),
$$
and for an arbitrary set $E\subset \Omega$ we define the \emph{relative relative $\Phi-$capacity} of $E$ with respect to $\Omega$ as
$$
\cp(E,\Omega):=\inf_{E \subset U,\, U \emph{ open }} \cp(U,\Omega).
$$ 

Among other useful properties, $\cp$  fulfills the following properties.
\begin{lema} \cite[Theorems 3.1 and 3.2]{Harjuleto-Hasto-Koskenoja}\label{prop.capa}
The function $\cp$ satisfies the following;
\begin{itemize}
\item[(i)] $\cp(\emptyset) =0$,
\item[(ii)] $\cp(E_1)\leq \cp(E_2)$ if $E_1\subset E_2\subset\R^n$,
\item[(iii)] if $\{E_k\}_{k\in\N}\subset \R^n$, then
$$
\cp\left(\bigcup_{i=1}^\infty E_k\right) \leq \sum_{i=1}^\infty \cp(E_k).
$$

\end{itemize}
\end{lema}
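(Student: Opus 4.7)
All three statements are standard Choquet-type axioms for $\cp$, and I would prove them directly from the definition via admissible functions in $S_\Phi(E)$. For (i), I would exhibit an explicit admissible function: the zero function $u\equiv 0$ lies in $S_\Phi(\emptyset)$, since the requirement that $u\geq 1$ on an open set containing $\emptyset$ is vacuously satisfied by taking that open set to be $\emptyset$ itself. Then $\rho_\Phi(0)=0$, so $\cp(\emptyset)\leq 0$, and non-negativity of the modular gives equality.

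For the monotonicity (ii), I would argue directly that $S_\Phi(E_2)\subset S_\Phi(E_1)$ whenever $E_1\subset E_2$: any open set containing $E_2$ automatically contains $E_1$, so any admissible function for $E_2$ is admissible for $E_1$. Taking the infimum over a larger class cannot increase the value, hence $\cp(E_1)\leq \cp(E_2)$.

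The subadditivity (iii) is the only substantive step. I may assume the right-hand side is finite. Fix $\varepsilon>0$ and pick near-minimizers $u_k\in S_\Phi(E_k)$ with $\rho_\Phi(u_k)\leq \cp(E_k)+\varepsilon\, 2^{-k}$, and set $v_N:=\max_{1\leq k\leq N} u_k$. By the lattice property of $W^{1,\Phi}(\R^n)$, $v_N\in W^{1,\Phi}(\R^n)$, and since each $u_k\geq 1$ on some open neighborhood $U_k$ of $E_k$, the function $v_N$ is $\geq 1$ on $\bigcup_{k=1}^N U_k\supset\bigcup_{k=1}^N E_k$. The key pointwise bound is
\[
\Phi(|v_N(x)|)+\Phi(|\nabla v_N(x)|)\leq \sum_{k=1}^N \bigl(\Phi(|u_k(x)|)+\Phi(|\nabla u_k(x)|)\bigr),
\]
which follows because at a.e. point some $u_{j(x)}$ realizes the maximum and $|\nabla v_N|=|\nabla u_{j(x)}|$ there, while the remaining terms on the right are nonnegative. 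Integrating yields $\rho_\Phi(v_N)\leq \sum_{k=1}^N \rho_\Phi(u_k)$, so $\cp\bigl(\bigcup_{k=1}^N E_k\bigr)\leq \sum_{k=1}^N \cp(E_k)+\varepsilon$.

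Passing to the limit $N\to\infty$ is the delicate point, since $v:=\sup_k u_k$ need not a priori sit in $W^{1,\Phi}(\R^n)$. I would handle this by noting that the modular bound above makes $\{v_N\}$ uniformly bounded in $W^{1,\Phi}(\R^n)$; by reflexivity (guaranteed under \eqref{cond}) a subsequence converges weakly to $v$, and the lower semicontinuity of $\rho_\Phi$ with respect to weak convergence (combined with monotone convergence on the modular, which works thanks to the $\Delta_2$ condition \eqref{G2}) yields $\rho_\Phi(v)\leq \sum_{k=1}^\infty \rho_\Phi(u_k)$. Since $v\in S_\Phi\bigl(\bigcup_k E_k\bigr)$, this gives $\cp\bigl(\bigcup_k E_k\bigr)\leq \sum_k \cp(E_k)+\varepsilon$, and letting $\varepsilon\to 0$ concludes (iii). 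The \emph{main obstacle} is precisely this final limit step, whose classical Sobolev-space analogue is immediate but in the Orlicz setting requires the interplay between the $\Delta_2$ condition and the lattice operations on $W^{1,\Phi}$.
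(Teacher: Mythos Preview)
The paper does not supply its own proof here; the lemma is simply quoted from Harjulehto--H\"ast\"o--Koskenoja. Your direct argument is the standard one and is correct: parts (i) and (ii) are immediate from the definition of $S_\Phi(E)$, and for (iii) the finite maximum $v_N=\max_{k\leq N}u_k$ together with the lattice structure of $W^{1,\Phi}(\R^n)$ and the pointwise modular bound handles finite unions, after which reflexivity (available under \eqref{cond}) and weak lower semicontinuity of $\rho_\Phi$ let you pass to the countable union. One minor streamlining of your final step: once you know $v_N\uparrow v$ pointwise and $\{v_N\}$ is bounded in $W^{1,\Phi}(\R^n)$, any weak subsequential limit must coincide a.e.\ with $v$, so $v\in W^{1,\Phi}(\R^n)$ and $v\in S_\Phi\bigl(\bigcup_k E_k\bigr)$; then weak lower semicontinuity of the modular alone yields $\rho_\Phi(v)\le\liminf_N\rho_\Phi(v_N)$, and the separate appeal to monotone convergence is not needed.
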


\section{Continuity of solutions}\label{continuidad}
Thorough this section $D\subset \R^n$ will denote a fixed open and bounded box, and $\Phi$ a Young function satisfying condition \eqref{cond}, unless otherwise requested. Given a sequence of open sets $\{\Omega_k\}_{k\in\N}\subset D$ and $f\in L^{\Phi^*}(\Omega)$, in view of Theorem \ref{teo.existencia}, we denote $u_{\Omega_k,f}$ a weak solution of 
\begin{align} \label{prob.f}
\begin{cases}
-\Delta_\phi u_{\Omega_k,f}=f &\text{ in } \Omega_k\\
u_{\Omega_k,f}=0 &\text{ on } \partial\Omega_k.
\end{cases}
\end{align}
When $f\equiv 1$, the corresponding solution $u_{\Omega_k,1}$ is usually called the \emph{torsion function}.

In this context, the sequence $\{\Omega_k\}_{k\in\N}\subset D$  is said to \emph{$\gamma-$converge} to an open set $\Omega\subset D$ (and it is denoted $\Omega_k\tog \Omega$) if for all $f\in L^{\Phi^*}(\Omega)$ the sequence of solutions $u_{\Omega_k,f}$ weakly converges in $W^{1,\Phi}_0(D)$ to the solution $u_{\Omega,f}$ of the limit problem
\begin{align} \label{prob.f.lim}
\begin{cases}
-\Delta_\phi u_{\Omega,f}=f &\text{ in } \Omega\\
u_{\Omega}=0 &\text{ on } \partial\Omega.
\end{cases}
\end{align}

Our first aim will be to prove that  in the definition of $\gamma-$convergence we can restrict ourselves to consider only  $f\equiv 1$. For that end, we need to prove several properties on solutions of \eqref{prob.f}.

We start with the following key lemma, which can be seen as a refinement of the monotonicity of Young functions.

\begin{lema} \label{lema4.1}
Assume that $\Phi$ is a Young function satisfying \eqref{cond}.  Then for all $a,b\in \R^n$ it holds that
\begin{equation}\label{des1}
\left(\frac{\phi(|a|)}{|a|}a-\frac{\phi(|b|)}{|b|}b \right)\cdot (a-b) \geq 0.
\end{equation}
Assume further \eqref{condC}, then there exists a positive constant $C=C(p^-)$ such that
\begin{equation}\label{des2}
  \left(\frac{\phi(|a|)}{|a|}a-\frac{\phi(|b|)}{|b|}b \right)\cdot (a-b)  \geq C   \Phi(|a-b|).
\end{equation}
\end{lema}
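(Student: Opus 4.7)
The vector field $\xi(a):=\frac{\phi(|a|)}{|a|}a$ is the gradient of the scalar function $F(a):=\Phi(|a|)$. Writing $F(a)=\tilde\Phi(|a|^2)$ with $\tilde\Phi(t)=\Phi(\sqrt{t})$ convex and nondecreasing (Lemma \ref{Phi.tilde}) and $|a|^2$ convex, we get that $F$ is convex. Hence for \eqref{des1} it suffices to add the two subgradient inequalities $F(a)\ge F(b)+\nabla F(b)\cdot(a-b)$ and $F(b)\ge F(a)+\nabla F(a)\cdot(b-a)$.

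For \eqref{des2} the plan is to write, with $v=a-b$ and $z_s=b+sv$,
\[
\left(\frac{\phi(|a|)}{|a|}a-\frac{\phi(|b|)}{|b|}b\right)\cdot(a-b)=\int_0^1 D^2F(z_s)(v,v)\,ds,
\]
and to compute the Hessian explicitly as
\[
D^2F(z)(v,v)=\frac{\phi'(|z|)|z|-\phi(|z|)}{|z|^3}(z\cdot v)^2+\frac{\phi(|z|)}{|z|}|v|^2.
\]
By \eqref{cond} one has $\phi'(|z|)|z|-\phi(|z|)\ge(p^--1)\phi(|z|)\ge 0$, so both summands are nonnegative and one gets the pointwise lower bound $D^2F(z)(v,v)\ge c\,\phi(|z|)|v|^2/|z|$ with $c=c(p^-)>0$.

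The problem thus reduces to the scalar estimate
\[
|v|^2\int_0^1 \frac{\phi(|z_s|)}{|z_s|}\,ds\ \ge\ C\,\Phi(|v|),
\]
with $C=C(p^\pm)$. I would handle this by a case analysis on the relative sizes of $|a|$ and $|b|$. If one dominates, say $|b|\le|a|/2$, then $|v|$ is comparable to $|a|$, the integrand for $s$ close to $1$ is bounded below by a constant times $\phi(|a|)/|a|$, and \eqref{G1} together with \eqref{condL} close the bound. If $|a|\sim|b|$ with $a\cdot b\ge 0$, the whole segment $z_s$ stays bounded below by a constant multiple of $|b|$, so the integrand is comparable to $\phi(|b|)/|b|$ and the same growth conditions give the estimate. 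In the remaining case $|a|\sim|b|$ with $a\cdot b<0$ the segment may pass near the origin, but there $|v|\sim|a|+|b|$ and it is more efficient to bypass the integral and argue directly from the identity
\[
\left(\tfrac{\phi(|a|)}{|a|}a-\tfrac{\phi(|b|)}{|b|}b\right)\cdot(a-b)=\phi(|a|)(|a|-|b|\cos\theta)+\phi(|b|)(|b|-|a|\cos\theta),
\]
where $\theta$ is the angle between $a$ and $b$.

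The main obstacle is to keep this case analysis clean, with the constants depending only on $p^\pm$. Condition \eqref{condC} (convexity of $\phi$) enters through the fact that $t\mapsto\phi(t)/t$ is then nondecreasing, which is precisely what allows the pointwise Hessian lower bound to pass through the integration and recover a constant multiple of $\Phi(|v|)$ in every regime.
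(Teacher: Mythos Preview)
Your argument for \eqref{des1} via convexity of $F(a)=\Phi(|a|)$ is correct; the paper instead expands the inner product and reduces to the scalar inequality $(|a|-|b|)(\phi(|a|)-\phi(|b|))\ge 0$.

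For \eqref{des2} your route genuinely differs from the paper's. The paper records the integrand simply as $|a-b|^2\,\phi'(|ta+(1-t)b|)$ and then splits into the two cases $|b|\ge|a-b|$ and $|b|<|a-b|$: in the first it exploits \eqref{condC} to make $\phi'$ monotone and combines this with Remark~\ref{condic}; in the second it bounds $|z_t|$ from above and applies Jensen's inequality to the convex auxiliary function $\tilde\Phi(t)=\Phi(\sqrt t)$ of Lemma~\ref{Phi.tilde}. You instead keep the full Hessian, discard its (nonnegative) tangential part, and work only with the radial piece $\phi(|z|)\,|v|^2/|z|$, splitting by the relative sizes of $|a|,|b|$ and the sign of $a\cdot b$; the explicit angle identity is a clean way to bypass the integral in the obtuse case where the segment may pass near the origin. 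The outline is sound and can be completed.

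One correction, however: the monotonicity of $t\mapsto\phi(t)/t$ already follows from \eqref{cond} alone (differentiate and use $t\phi'(t)\ge p^-\phi(t)>\phi(t)$), so contrary to your final paragraph your argument never actually invokes \eqref{condC}. Carried through, your scheme therefore proves \eqref{des2} under the hypothesis \eqref{cond} only, with a constant $C=C(p^-,p^+)$ entering via the repeated use of \eqref{G1} when comparing $\Phi$ at comparable arguments. That is not a defect---it is a slightly different (and in the hypothesis, stronger) statement than the one in the lemma---but you should be aware that the role you attribute to \eqref{condC} is not the one it plays in your own argument.
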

\begin{proof}
Let $a,b\in \R^n$. From the following inequality 
$$
\left(\frac{\phi(|a|)}{|a|}a-\frac{\phi(|b|)}{|b|}b \right)\cdot (a-b)\geq (|a|-|b|)(\phi(|a|)-\phi(|b|)),
$$
relation \eqref{des1} follows since $\phi$ is an increasing function.

Now we prove \eqref{des2}. A direct computation gives that
\begin{align*}
I&:= \left(\frac{\phi(|a|)}{|a|}a-\frac{\phi(|b|)}{|b|}b \right)\cdot (a-b) 
\\&=   \int_0^1 (a-b) \frac{d}{dt} \left(\frac{\phi(|at + (1-t)b|)}{|at + (1-t)b|}(at + (1-t)b) \right)\,dt \\
&=  |a-b|^2  \int_0^1  \phi'(|at + (1-t)b|)\,dt.
\end{align*}
When $|b|\geq |a-b|$ we have that
$$
|at+(1-t)b| \geq |b| - |a-b|t \geq (1-t)|a-b|
$$
from where, by using the convexity of $\phi$ we get
\begin{align*}
I&\geq |a-b|^2 \int_0^1  \phi'((1-t)|a-b|)\,dt.
\end{align*}
Observe that,  by using  Remark \ref{condic}, from the last inequality we obtain that
\begin{align*}
I&\geq p^- (p^- +1)\int_0^1 \frac{1}{(1-t)^2} \Phi((1-t)|a-b|)\,dt\\
&\geq p^- (p^- +1)\Phi(|a-b|) \int_0^1  (1-t)^{p^- -1}\,dt\\
&=(p^-+1) \Phi(|a-b|)
\end{align*}
where we have used propery \eqref{G1}.

Now, assume that $|b|< |a-b|$. In this case we have that
$$
|ta +(1-t)b| \leq  (3-t)|a-b|
$$
and then, using Remark \ref{condic}
\begin{align*}
I&\geq \int_0^1 \frac{|at+(1-t)b|^2}{|3-t|^2} \phi'(|at+(1-t)b|)\,dt\\
&\geq p^-(p^- +1 )\int_0^1 \frac{\Phi(|at+(1-t)b|)}{|3-t|^2}\,dt.
\end{align*}
In light of  Lemma \ref{Phi.tilde} the function $\tilde \Phi:=\Phi(\sqrt{t})$ is convex for $t\geq0$ and then,   from Jensen's inequality we get
\begin{align*}
I&\geq \frac{p^-(p^-+1)}{9} \int_0^1 \tilde\Phi(|at+(1-t)b|^2)\,dt\\
&\geq \frac{p^-(p^-+1)}{9}  \tilde \Phi \left(\int_0^1 |at+(1-t)b|^2\,dt \right)\\
&=\frac{p^-(p^-+1)}{9} \tilde \Phi \left(\int_0^1 t^2 |a|^2 + (1-t)^2|b|^2 + 2 t(1-t)a\cdot b\,dt \right)\\
&=\frac{p^-(p^-+1)}{9}  \tilde \Phi \left(  \frac12 |a|^2 + \frac13|b|^2 + \frac13 a\cdot b \right)\\
&\geq\frac{p^-(p^-+1)}{9}\frac{1}{3^\frac{p^- +1}{2}}  \tilde \Phi \left(|    |a|^2 +  |b|^2 +   a\cdot b | \right).
\end{align*}
Finally, since it holds that
$$
\frac{ ||a|^2 + |b|^2 + a\cdot b|}{|a-b|^2} \geq \frac14 \quad \text{ for all } a,b \text{ such that } |a-b|>|b|,
$$
from the last inequality we arrive at
$$
I\geq   \frac{p^-(p^-+1)}{9}\frac{1}{12^\frac{p^- +1}{2}}  \tilde\Phi (|a-b|^2) = C(p^-) \Phi(|a-b|)
$$
and the proof concludes.
\end{proof}

The following result provides for the monotonicity of solutions with respect to the domain.

\begin{lema} \label{monotonia} 
Let $f\in L^{\Phi^*}(D)$ be nonnegative. If $\Omega_1\subset \Omega_2$, then $u_{\Omega_1,f} \leq u_{\Omega_2,f}$.
\end{lema}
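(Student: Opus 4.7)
The plan is to reduce the comparison between the two solutions to an application of the pointwise comparison principle already recorded as Lemma \ref{comparison}, combined with the weak maximum principle (Lemma \ref{weak.mp}) to control the values on the boundary of the smaller set.

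First, I would observe that since $f\geq 0$ in $D$, the weak maximum principle applied to each problem yields $u_{\Omega_1,f}\geq 0$ in $\Omega_1$ and $u_{\Omega_2,f}\geq 0$ in $\Omega_2$. As usual, I extend both functions by zero outside their respective domains, obtaining two nonnegative functions in $W^{1,\Phi}_0(D)$, and in particular the restriction of $u_{\Omega_2,f}$ to $\Omega_1$ lies in $W^{1,\Phi}(\Omega_1)$.

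Next, I would check the two hypotheses of Lemma \ref{comparison} on the open set $\Omega_1$ with the choices $v:=u_{\Omega_2,f}$ and $u:=u_{\Omega_1,f}$. For the boundary condition, on $\partial\Omega_1$ we have $u_{\Omega_1,f}=0$ (in the trace sense), while $u_{\Omega_2,f}\geq 0$ there because $u_{\Omega_2,f}\geq 0$ on $\overline{\Omega_2}\supset \overline{\Omega_1}$ (with the zero extension outside $\Omega_2$, any boundary point of $\Omega_1$ either lies in $\Omega_2$, where nonnegativity is given by the maximum principle, or lies in $\partial\Omega_2$, where $u_{\Omega_2,f}=0$). Hence $v\geq u$ on $\partial\Omega_1$. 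For the differential inequality, any nonnegative $\psi\in C_c^\infty(\Omega_1)$ also belongs to $C_c^\infty(\Omega_2)$ after the zero extension, so testing the weak formulation \eqref{debil} for each solution against $\psi$ gives
\begin{equation*}
\int_{\Omega_1} \frac{\phi(|\nabla u_{\Omega_2,f}|)}{|\nabla u_{\Omega_2,f}|}\nabla u_{\Omega_2,f}\cdot\nabla\psi\,dx=\int_{\Omega_1} f\psi\,dx=\int_{\Omega_1}\frac{\phi(|\nabla u_{\Omega_1,f}|)}{|\nabla u_{\Omega_1,f}|}\nabla u_{\Omega_1,f}\cdot\nabla\psi\,dx,
\end{equation*}
so the required inequality holds with equality. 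Lemma \ref{comparison} then gives $u_{\Omega_2,f}\geq u_{\Omega_1,f}$ in $\overline{\Omega_1}$.

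Finally, on $D\setminus\overline{\Omega_1}$ we have $u_{\Omega_1,f}\equiv 0$ while $u_{\Omega_2,f}\geq 0$, so the inequality extends to all of $D$. The only delicate point is the boundary comparison on $\partial\Omega_1$: one must be careful that $\partial\Omega_1$ need not be contained in $\Omega_2$, but the zero extension together with nonnegativity of $u_{\Omega_2,f}$ on $\Omega_2$ handles both possibilities uniformly, so no additional regularity assumption on the $\Omega_i$ is needed.
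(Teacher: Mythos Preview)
Your proof is correct. The approach differs from the paper's only in packaging: you invoke Lemma~\ref{comparison} as a black box, verifying its two hypotheses on $\Omega_1$, whereas the paper carries out the comparison argument by hand---it subtracts the two weak formulations, tests directly with $(u_{\Omega_1,f}-u_{\Omega_2,f})^+$, and uses the monotonicity inequality of Lemma~\ref{lema4.1} to conclude that this positive part vanishes. The step where the paper checks that $(u_1-u_2)^+\in W^{1,\Phi}_0(\Omega_1)$ (via $(u_1-u_2)^+\le u_1^+$ since $u_2\ge 0$) is exactly the content of your boundary verification $v\ge u$ on $\partial\Omega_1$; conversely, unrolling the proof of Lemma~\ref{comparison} would reproduce the paper's computation. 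So the two arguments are essentially the same one, with yours being the cleaner invocation of an already-stated lemma and the paper's being self-contained.
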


\begin{proof}
We denote $u_k= u_{\Omega_k,f}$. Since $u_k \in W^{1,\Phi}_0(\Omega_k)$ is a solution, then
$$
\int_{\Omega_k} \frac{\phi(|\nabla u_k|)}{|\nabla u_k|} \nabla u_k\cdot \nabla v \,dx = \int_{\Omega_k} f v\,dx \qquad \forall v \in W^{1,\Phi}_0(\Omega_k).
$$
The inclusion $W^{1,\Phi}_0(\Omega_1)\subset W^{1,\Phi}_0(\Omega_2)$ gives that
$$
\int_{\Omega_1} \left(\frac{\phi(|\nabla u_1|)}{|\nabla u_1|} \nabla u_1 -\frac{\phi(|\nabla u_2|)}{|\nabla u_2|} \nabla u_2 \right) \cdot \nabla v \,dx =0  \qquad \forall v \in W^{1,\Phi}_0(\Omega_1).
$$
Since $f\geq 0$, from Proposition  \ref{princ.max.fuerte} we obtain that $u_2\geq 0$ in $\Omega_2$. Hence $(u_1-u_2)^+ \leq u_1^+ \in W^{1,\Phi}_0(\Omega_1)$ and then for all $v \in W^{1,\Phi}_0(\Omega_1)$ it holds that
$$
\int_{\Omega_1} \left(\frac{\phi(|\nabla u_1|)}{|\nabla u_1|} \nabla u_1 -\frac{\phi(|\nabla u_2|)}{|\nabla u_2|} \nabla u_2 \right)\cdot \nabla (u_1-u_2)^+ \,dx =0 .
$$
Moreover, for all $v \in W^{1,\Phi}_0(\Omega_1)$ we have that
$$
\int_{\Omega_1 \cap \{u_1>u_2\}} \left(\frac{\phi(|\nabla u_1|)}{|\nabla u_1|} \nabla u_1 -\frac{\phi(|\nabla u_2|)}{|\nabla u_2|} \nabla u_2 \right)\cdot \nabla (u_1-u_2)^+ \,dx =0 .
$$
But, since in $\Omega_1\cap \{u_1>u_2\}$ we have that $\nabla (u_1-u_2)=\nabla (u_1-u_2)^+$, from where
$$
\int_{\Omega_1 \cap \{u_1>u_2\}}  \Phi(|\nabla (u_1-u_2)|)\,dx \leq 0
$$
but, since $\nabla (u_1-u_2)^+ = \nabla (u_1-u_2)\chi_{u_1>u_2}$ we can rewrite the last equation as
$$
\int_{\Omega_1}  \Phi(|\nabla (u_1-u_2)^+|) \,dx= \int_{\Omega_1}  \Phi(|\nabla (u_1-u_2)|) \chi_{u_1>u_2}\,dx \leq 0.
$$
Hence, $\nabla (u_1-u_2)^+ = 0$ in $\Omega_1$, and then $u_1-u_2$ is constant in $\Omega_1$. Since $(u_1-u_2)^+ \in W^{1,\Phi}_0(\Omega_1)$, $(u_1-u_2)^+=0$. Therefore, $u_1 -u_2 \leq 0$ and the proof concludes.
\end{proof}

The following result establishes the uniform boundedness of a sequence of solutions in the Orlicz-Sobolev norm.

\begin{lema} \label{lema.acotada}
Given $f\in L^{\Phi^*}(D)$ and let $\{u_{\Omega_k,f}\}_{k\in\N}\subset W^{1,\Phi}_0(\Omega_k)$ be a sequence of solutions of \eqref{prob.f}, then 
$$
\|\nabla u_{\Omega_k,f}\|_{L^\Phi(\Omega_k)}\leq C,
$$
where $C$ is a uniform positive constant depending only on $f$, $D$ and $p^\pm$.
\end{lema}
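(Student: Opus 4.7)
The plan is to test the weak formulation against the solution itself and then run a standard absorption argument. Crucially, since every $\Omega_k$ is contained in the bounded box $D$, each $u_{\Omega_k,f}$ extends by zero to an element of $W^{1,\Phi}_0(D)$, and in Lemma \ref{poincare} (together with Remark \ref{ctePoincare}) the Poincar\'e constant depends only on $\mathrm{diam}(D)$ and $p^\pm$, so we have a \emph{uniform} Poincar\'e inequality available across the whole sequence.

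The concrete steps would be as follows. First, write $u_k=u_{\Omega_k,f}$ and plug $v=u_k\in W^{1,\Phi}_0(\Omega_k)$ into Definition \ref{sd}, obtaining
\begin{equation*}
\int_{\Omega_k} \phi(|\nabla u_k|)|\nabla u_k|\,dx = \int_{\Omega_k} f u_k\,dx.
\end{equation*}
Second, use the left-hand inequality in \eqref{condL} to bound the left side below by $(p^-+1)\int_{\Omega_k}\Phi(|\nabla u_k|)\,dx$. Third, for the right side apply the Young-type inequality \eqref{Young} with a parameter $\varepsilon\in(0,1)$ to split $fu_k\leq \Phi^*(f/\varepsilon)+\Phi(\varepsilon u_k)$; by property \eqref{G1} we have $\Phi(\varepsilon u_k)\leq \varepsilon^{p^-+1}\Phi(|u_k|)$, and by \eqref{g.s.1} we have $\Phi^*(f/\varepsilon)\leq \varepsilon^{-(p^++1)'}\Phi^*(f)$. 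Fourth, control $\int\Phi(|u_k|)$ by $C_P\int\Phi(|\nabla u_k|)$ via Lemma \ref{poincare}, with $C_P=C_P(\mathrm{diam}(D),p^\pm)$. Combining these three estimates gives
\begin{equation*}
(p^-+1)\int_{\Omega_k}\Phi(|\nabla u_k|)\,dx \leq \varepsilon^{-(p^++1)'}\int_D \Phi^*(f)\,dx + \varepsilon^{p^-+1}C_P\int_{\Omega_k}\Phi(|\nabla u_k|)\,dx.
\end{equation*}

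Fifth, choose $\varepsilon$ small enough (depending only on $p^\pm$ and $C_P$, hence only on $p^\pm$ and $\mathrm{diam}(D)$) so that $\varepsilon^{p^-+1}C_P\leq (p^-+1)/2$, and absorb. This yields a uniform modular bound
\begin{equation*}
\int_{\Omega_k}\Phi(|\nabla u_k|)\,dx \leq M,
\end{equation*}
with $M$ depending only on $\int_D \Phi^*(f)\,dx$, $p^\pm$, and $\mathrm{diam}(D)$. Finally, translate the modular bound into a Luxemburg norm bound exactly as in the proof of Theorem \ref{teo.existencia}: setting $\bar M=\max\{1,M\}$, property \eqref{G1} gives $\int\Phi(|\nabla u_k|/\bar M)\leq \bar M^{-1}\int\Phi(|\nabla u_k|)\leq 1$, so $\|\nabla u_k\|_{L^\Phi(\Omega_k)}\leq \bar M$, as required.

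I do not anticipate a serious obstacle here; the only point that needs care is the \emph{uniformity} in $k$ of the constants. The Poincar\'e constant is uniform thanks to Remark \ref{ctePoincare} (all $\Omega_k$ sit inside $D$), and the absorption argument works with an $\varepsilon$ that depends only on $p^\pm$ and $C_P$, so the final bound is independent of $k$ and depends only on $f$, $D$, and $p^\pm$.
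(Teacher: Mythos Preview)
Your proof is correct and follows essentially the same strategy as the paper's: test the weak formulation with $u_k$, use \eqref{condL} to pass from $\int\phi(|\nabla u_k|)|\nabla u_k|$ to the modular, control $\int f u_k$, and invoke the uniform Poincar\'e inequality from Remark~\ref{ctePoincare}. The only difference is in how $\int f u_k$ is handled: the paper uses H\"older's inequality to get $\|f\|_{L^{\Phi^*}}\|\nabla u_k\|_{L^\Phi}$ and then compares norm and modular directly (arriving at $\|\nabla u_k\|_{L^\Phi}\le \tilde C\|f\|_{L^{\Phi^*}}^{1/p^-}$), whereas you stay at the modular level via Young's inequality and absorb, exactly as in the proof of Theorem~\ref{teo.existencia}. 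Both routes are standard and yield the same dependence of the constant.

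One minor slip: in your use of \eqref{g.s.1} with $a=1/\varepsilon>1$, since $(p^-+1)'\ge(p^++1)'$ the maximum is $(1/\varepsilon)^{(p^-+1)'}$, so the correct bound is $\Phi^*(f/\varepsilon)\le \varepsilon^{-(p^-+1)'}\Phi^*(f)$, not $\varepsilon^{-(p^++1)'}$. This does not affect the argument, only the explicit form of the constant.
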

\begin{proof}
Denote $u_k=u_{\Omega_k,f}$. If $\|\nabla u_k\|_{L^\Phi(\Omega_k)} \leq 1$ there is nothing to do. Assume otherwise that $\|\nabla u_k\|_{L^\Phi(\Omega_k)} > 1+\ve$ for some $\ve>0$, then
\begin{align} \label{eqq1}
\begin{split}
\int_{\Omega_k} \Phi(|\nabla u_k|)\,dx &\geq \left( \frac{\|\nabla u_k\|_{L^\Phi(\Omega_k)}}{1+\ve}\right)^{p^- +1} \int_{\Omega_k} \Phi\left( |\nabla u_k| \frac{1+\ve}{\|\nabla u_k \|_{L^\Phi(\Omega_k)}} \right) \,dx \\
 &\geq \left( \frac{\|\nabla u_k\|_{L^\Phi(\Omega_k)}}{1+\ve}\right)^{p^-+1} \int_{\Omega_k} \Phi\left(  \frac{|\nabla u_k|}{\|\nabla u_k \|_{L^\Phi(\Omega_k)}} \right) \,dx\\ 
 &=\left( \frac{\|\nabla u_k\|_{L^\Phi(\Omega_k)}}{1+\ve}\right)^{p^-+1}
 \end{split}
\end{align}
where we have used the definition of the Luxemburg norm.

Observe that since $u_k$ is solution then, by using \eqref{cond} we get
$$
(p^- +1) \int_{\Omega_k} \Phi(|\nabla u_k|)\,dx \leq \int_{\Omega_k} \phi(|\nabla u_k|)|\nabla u_k| \,dx = \int_{\Omega_k} fu_k\,dx
$$
but, from H\"older's and Poincar\'e's inequalities for Orlicz functions we get that
\begin{align*}
\int_{\Omega_k} f u_k\,dx &\leq \|f\|_{L^{\Phi^*}(\Omega_k)} \| u_k\|_{L^{\Phi}(\Omega_k)}	\\
&\leq C\|f\|_{L^{\Phi^*}(\Omega_k)} \| \nabla u_k\|_{L^{\Phi}(\Omega_k)}
\end{align*}
where, from Remark \ref{ctePoincare} the constant $C$ only dependes of the parameters $p^\pm$ and the diameter of $D$. From \eqref{eqq1} and the last two relations we get
\begin{align*}
\|\nabla u_k\|_{L^\Phi(\Omega_k)} &\leq  (1+\ve) \left( \int_{\Omega_k} \Phi(|\nabla u_k|)\,dx  \right)^\frac{1}{p^- +1}\\
&\leq    (1+\ve) \left( \frac{C}{p^-+1} \|f\|_{L^{\Phi^*}(\Omega_k)} \|\nabla u_k\|_{L^\Phi(\Omega_k)} \right)^\frac{1}{p^-+1}
\end{align*}
and finally, 
$$
\|\nabla u_k\|_{L^\Phi(\Omega_k)} \leq  \tilde{C} \|f\|_{L^{\Phi^*}(D)}^{\frac{1}{p^-}},
$$
with $\tilde{C}=\tilde{C}(p^\pm,D)$, concluding the proof.
\end{proof}

We prove now a stability result for solutions of \eqref{prob.f} with respect to $f$. 

\begin{prop} \label{estabilidad}
Let $\Omega\subset D\subset \R^n$ be open and $f_1,f_2\in L^{\Phi^*}(D)$. Then there exists $C=C(D,p^-,f_1,f_2)$ independent of $\Omega$ such that
$$
\int_\Omega\Phi(|\nabla u_{\Omega,f_1} - \nabla u_{\Omega,f_2}|)\leq C \|f_1-f_2\|_{L^{\Phi^*}(D)}
$$
and 
$$
\| \nabla u_{\Omega,f_1} - \nabla u_{\Omega,f_2} \|_{L^\Phi(\Omega)} \leq C \|f_1-f_2\|_{L^{\Phi^*}(D)}.
$$
\end{prop}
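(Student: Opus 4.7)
My plan is to subtract the weak formulations of the two Dirichlet problems and take $w := u_{\Omega,f_1} - u_{\Omega,f_2}$ itself as test function. Using the weak formulation \eqref{debil} (which applies since $w \in W^{1,\Phi}_0(\Omega)$), this yields
\begin{equation*}
\int_\Omega \left( \frac{\phi(|\nabla u_{\Omega,f_1}|)}{|\nabla u_{\Omega,f_1}|}\nabla u_{\Omega,f_1} - \frac{\phi(|\nabla u_{\Omega,f_2}|)}{|\nabla u_{\Omega,f_2}|}\nabla u_{\Omega,f_2} \right)\cdot \nabla w\,dx = \int_\Omega (f_1 - f_2)\, w\,dx.
\end{equation*}
The left hand side will be bounded from below by the refined monotonicity \eqref{des2} of Lemma \ref{lema4.1}, giving
$C \int_\Omega \Phi(|\nabla w|)\,dx$ for a constant $C=C(p^-)$ (this step is where condition \eqref{condC} is implicitly needed, as in Lemma \ref{lema4.1}).

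For the right hand side, I would apply H\"older's inequality for Orlicz spaces together with Poincar\'e's inequality (Lemma \ref{poincare} and Remark \ref{ctePoincare}, so the Poincar\'e constant depends only on $p^\pm$ and the diameter of $D$, not on $\Omega$) to obtain
\begin{equation*}
\left|\int_\Omega (f_1-f_2)\,w\,dx\right| \leq 2\|f_1-f_2\|_{L^{\Phi^*}(\Omega)}\|w\|_{L^\Phi(\Omega)} \leq C_1 \|f_1-f_2\|_{L^{\Phi^*}(D)} \|\nabla w\|_{L^\Phi(\Omega)}.
\end{equation*}
Combined with the lower bound from Lemma \ref{lema4.1}, this gives
$\int_\Omega \Phi(|\nabla w|)\,dx \leq C_2\|f_1-f_2\|_{L^{\Phi^*}(D)}\|\nabla w\|_{L^\Phi(\Omega)}$.
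Now Lemma \ref{lema.acotada} applied to each $u_{\Omega,f_i}$ separately yields $\|\nabla w\|_{L^\Phi(\Omega)}\leq C_3$ with $C_3=C_3(f_1,f_2,D,p^\pm)$ independent of $\Omega$. Substituting this into the previous display produces the first desired inequality.

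For the second inequality, I would pass from the modular estimate back to a Luxemburg-norm estimate using property \eqref{G1}. Concretely, if $\lambda := \|\nabla w\|_{L^\Phi(\Omega)}$, then by the definition of the Luxemburg norm together with the $\Delta_2$-type scaling \eqref{G1}, one has
$\int_\Omega \Phi(|\nabla w|)\,dx \geq \min\{\lambda^{p^-+1}, \lambda^{p^++1}\}$; combining with the first inequality yields the desired bound, with the constant $C$ absorbing the nonlinear dependence on $\|f_1-f_2\|_{L^{\Phi^*}(D)}$ through the uniform a priori bound $\lambda \leq C_3$. The main delicate point is this last step: the passage between modular and norm is inherently nonlinear in Orlicz spaces, so the second inequality should be understood in the sense that $C$ depends on $f_1,f_2$ and absorbs the resulting powers; everything else is a routine application of the weak formulation, Lemma \ref{lema4.1}, H\"older-Poincar\'e, and the uniform bound of Lemma \ref{lema.acotada}.
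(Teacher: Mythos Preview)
Your proposal is correct and follows essentially the same approach as the paper: subtract the weak formulations, test with $w=u_{\Omega,f_1}-u_{\Omega,f_2}$, bound the right side via H\"older--Poincar\'e and the uniform gradient bound of Lemma~\ref{lema.acotada}, and bound the left side from below via Lemma~\ref{lema4.1}. In fact you are slightly more careful than the paper, which simply writes ``Finally by Lemma~\ref{lema4.1} we get the result'' without explaining the passage from the modular estimate to the Luxemburg-norm estimate; your remark about \eqref{G1} and the nonlinearity of that step fills in exactly what the paper leaves implicit.
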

\begin{proof}
Let us denote $u_k = u_{\Omega,f_k}$. Since for every $\psi\in W^{1,\Phi}_0(\Omega)$, $u_k$, $i=1,2$ satisfy that
$$
\int_\Omega  \frac{\phi(|\nabla u_k|)}{|\nabla u_k|} \nabla u_k \cdot \nabla \psi\, dx=\int_\Omega f_k \psi\, dx,
$$
testing with $\psi=u_1-u_2$ and subtracting we get
\begin{align*}
\int_\Omega &\left (\frac{\phi(|\nabla u_1|)}{|\nabla u_1|} \nabla u_1-\frac{ \phi(|\nabla u_2|)}{|\nabla u_2|} \nabla u_2 \right) \cdot (\nabla u_1-\nabla u_2)dx\\
 &=\int_\Omega (f_1-f_2)(u_1-u_2)dx\\
&\leq \|f_1-f_2\|_{L^{\Phi^*}(D)} \| u_1-u_2\|_{L^{\Phi}(\Omega)}\\
&\leq  C_P(D)\|f_1-f_2\|_{L^{\Phi^*}(D)} \| \nabla u_1- \nabla u_2\|_{L^{\Phi}(\Omega)}\\
&\leq  C_P(D)\|f_1-f_2\|_{L^{\Phi^*}(D)} ( \|  \nabla u_1\|_{L^{\Phi}(\Omega)} + \| \nabla u_2\|_{L^{\Phi}(\Omega)})\\
&\leq C  \|f_1-f_2\|_{L^{\Phi^*}(D)}
\end{align*}
where we have used Lemma \ref{poincare}, Remark \ref{ctePoincare} and  Proposition \ref{lema.acotada}. Finally by Lemma \ref{lema4.1} we get the result.
\end{proof}

Now are now in position to prove that it suffices with considering $f=1$ when studying $\gamma-$convergence of domains.

\begin{prop} \label{prop.equiv}
Let $\{\Omega_k\}_{k\in\N}$ and $\Omega$ be open subsets of $D\subset \R^n$ and let $\Phi$ be a Young function satisfying \eqref{cond} and \eqref{condC}. Then, for every $f\in L^{\Phi^*}(D)$ we have  $u_{\Omega_k,f} \cd u_{\Omega,f}$ weakly  in $W^{1,\Phi}_0(D)$ if and only if $u_{\Omega_k,1}\cd u_{\Omega,1}$ weakly in $W^{1,\Phi}_0(D)$.
\end{prop}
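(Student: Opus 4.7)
The forward implication is immediate because $D$ is bounded, so $1\in L^{\Phi^*}(D)$.

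For the converse, fix $f\in L^{\Phi^*}(D)$ and assume $u_{\Omega_k,1}\cd u_{\Omega,1}$ in $W^{1,\Phi}_0(D)$. By Lemma~\ref{lema.acotada} the family $\{u_{\Omega_k,f}\}_{k\in\N}$ is uniformly bounded in $W^{1,\Phi}_0(D)$, so up to a (not relabelled) subsequence $u_{\Omega_k,f}\cd u^{*}$ weakly in $W^{1,\Phi}_0(D)$, and by Theorem~\ref{compacidad} combined with Lemma~\ref{prop.c} also strongly in $L^\Phi(D)$ and $\Phi$-q.e.\ pointwise; a diagonal extraction makes $u_{\Omega_k,1}\to u_{\Omega,1}$ q.e.\ simultaneously. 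Since Theorem~\ref{teo.existencia} grants uniqueness of the Dirichlet problem (strict convexity of $\Phi$ is a consequence of \eqref{cond}), it suffices to prove $u^{*}=u_{\Omega,f}$ along the subsequence; convergence of the entire sequence then follows by a subsequence-of-subsequence argument.

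The identification of $u^{*}$ splits into two steps. \textbf{(i)} We first show $u^{*}\in W^{1,\Phi}_0(\Omega)$. By Proposition~\ref{eqiv.espacios} this reduces to $\tilde u^{*}=0$ $\Phi$-q.e.\ on $D\setminus\Omega$. The strong maximum principle (Lemma~\ref{princ.max.fuerte}) yields $u_{\Omega,1}>0$ in $\Omega$ and $\tilde u_{\Omega,1}=0$ q.e.\ on $D\setminus\Omega$, so $\Omega=\{\tilde u_{\Omega,1}>0\}$ up to $\Phi$-capacity; likewise $\Omega_k=\{\tilde u_{\Omega_k,1}>0\}$ q.e. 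Because $\tilde u_{\Omega_k,f}=0$ q.e.\ on $D\setminus\Omega_k=\{\tilde u_{\Omega_k,1}=0\}$, the plan is to transfer this vanishing to the limit through the simultaneous q.e.\ convergence of the torsion sequence and of $u_{\Omega_k,f}$.

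\textbf{(ii)} Next we verify the Euler--Lagrange equation in $\Omega$. Given $\varphi\in C_c^\infty(\Omega)$, build admissible test functions in $W^{1,\Phi}_0(\Omega_k)$ via a torsion-function truncation,
$$
\varphi_k:=\operatorname{sgn}(\varphi)\,\min\{\,|\varphi|,\, M_k\, u_{\Omega_k,1}\,\},
$$
with $M_k\to\infty$. Since $u_{\Omega,1}$ is bounded below on $\supp(\varphi)\subset\subset\Omega$ and $u_{\Omega_k,1}\to u_{\Omega,1}$ strongly in $L^\Phi(D)$, for $k$ large enough $\varphi_k=\varphi$ on $\supp(\varphi)$, so $\varphi_k\to\varphi$ strongly in $W^{1,\Phi}_0(D)$. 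Inserting $\varphi_k$ into the weak formulation for $u_{\Omega_k,f}$ and invoking the refined monotonicity of Lemma~\ref{lema4.1} (which relies on \eqref{condC}) in a Minty--Browder scheme permits passing to the limit through the nonlinear principal term, giving $-\Delta_\phi u^{*}=f$ weakly in $\Omega$. Uniqueness then closes the argument.

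The main obstacle is step (i): upgrading the weak convergence of torsion functions to the q.e.\ vanishing of $\tilde u^{*}$ on $D\setminus\Omega$ demands delicate handling of quasi-continuous representatives, since pointwise convergence $\tilde u_{\Omega_k,1}(x)\to 0$ at a fixed $x\in D\setminus\Omega$ does not force $\tilde u_{\Omega_k,1}(x)=0$ for any particular $k$, and hence does not by itself force $\tilde u_{\Omega_k,f}(x)=0$. The Minty--Browder identification in step (ii), by contrast, follows a standard template once Lemma~\ref{lema4.1} is available.
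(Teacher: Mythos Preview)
Your proposal contains a genuine and self-acknowledged gap in step~(i): the q.e.\ convergence route you outline does not close, for exactly the reason you state in your final paragraph. The paper resolves this by an entirely different and much simpler mechanism: the comparison principle. For bounded $f$ with $|f|\le M$, Lemma~\ref{comparison} gives the pointwise sandwich
\[
-M\,u_{\Omega_k,1}\le u_{\Omega_k,f}\le M\,u_{\Omega_k,1}\quad\text{a.e.\ in }D,
\]
and passing to the limit (using the assumed weak convergence of the torsion sequence together with a.e.\ convergence along a subsequence) yields $|v|\le M\,u_{\Omega,1}$ a.e. Since $u_{\Omega,1}\in W^{1,\Phi}_0(\Omega)$, this domination forces $v\in W^{1,\Phi}_0(\Omega)$ directly. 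The extension from bounded $f$ to general $f\in L^{\Phi^*}(D)$ is then carried out by the density of $L^\infty$ in $L^{\Phi^*}$ (Lemma~\ref{densidad}) combined with the stability estimate of Proposition~\ref{estabilidad}. This comparison-then-density scheme avoids quasi-continuous representatives altogether.

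Your step~(ii) also has a defect. The assertion that ``for $k$ large enough $\varphi_k=\varphi$ on $\supp(\varphi)$'' is unjustified: strong $L^\Phi$ convergence of $u_{\Omega_k,1}$ to $u_{\Omega,1}$ does not give a uniform positive lower bound on $\supp(\varphi)$, and the present proposition carries \emph{no} Hausdorff-type hypothesis ensuring $\supp(\varphi)\subset\Omega_k$ eventually. If $\supp(\varphi)\setminus\Omega_k$ has positive measure, then $u_{\Omega_k,1}\equiv 0$ there, so $\varphi_k\equiv 0$ there regardless of how large $M_k$ is. Even if one could salvage $\varphi_k\to\varphi$ in $L^\Phi$, the claimed strong $W^{1,\Phi}_0$ convergence would require control of $\nabla u_{\Omega_k,1}$ on the truncation set, and only weak convergence is available. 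In the paper, the identification of the limit equation is in fact handled separately in Proposition~\ref{prop.previa}, which explicitly \emph{assumes} $\Omega_k\toh\Omega$ precisely so that compactly supported test functions in $\Omega$ become admissible in $\Omega_k$ for large $k$; that assumption is absent here, and your construction cannot substitute for it.
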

\begin{proof}
It is only necessary to prove one implication. Indeed, assume that $u_{\Omega_k,1} \cd u_{\Omega,1}$ weakly in $W^{1,\Phi}_0(D)$. Let us show that given $f\in L^{\Phi^*}(D)$ it holds that $u_{\Omega_k,f} \cd  u_{\Omega,f}$ weakly in $W^{1,\Phi}_0(D)$.

First, observe that in the light of Lemma \ref{lema.acotada} the sequence $u_{\Omega_k,f}$ is uniformly bounded in $W^{1,\Phi}_0(D)$, then, due to the reflexivity of $W^{1,\Phi}_0(D)$, up to a subsequence, there exists a function $v\in W^{1,\Phi}_0(\Omega)$ such that
\begin{equation*} 
u_{\Omega_k,f} \cd v \text{ weakly in }W^{1,\Phi}_0(D),
\end{equation*}
but moreover, since $\{u_k\}_{k\in\N}$ are solutions, it easily follows from  Lemma \ref{lema4.1} that $u_{\Omega_k,f}\to v$ a.e. in $D$, up to a subsequence if necessary.

 From mentioned convergences, it only remains to be proved that $v=u_{\Omega,f}$, and by uniqueness of the limit, we will have convergence of the whole sequence strongly in $L^\Phi(D)$.

Observe that by Lemma \ref{densidad}, it suffices with considering $f\in L^\infty(D)$. Indeed, since $|f|\leq M$, by Lemma \ref{comparison} we get
\begin{align} \label{estim1}
-M u_{\Omega_k,1}\leq u_{\Omega_k,f} \leq M u_{\Omega_k,1} \quad a.e.\text{ in }D.
\end{align}
Taking limit as $k\to\infty$, by using our hypothesis we obtain that
$$
-M u_{\Omega,1} \leq v \leq M  u_{\Omega,1}\quad a.e.\text{ in }D.
$$
By the above inequality we have that $v\in W^{1,\Phi}_0(D)$.

\medskip

Let us see that $v = u_{\Omega,f}$. Given $f\in L^{\Phi^*}(D)$, by Lemma \ref{densidad} there exists a sequence $\{f_j\}_j\subset L^\infty(D)$ such that $f_j\to f$ in $L^{\Phi^*}(D)$. 
Then, given $\psi\in L^{\Phi^*}(D)$,
\begin{align*}
\int_D (u_{\Omega_k,f}-u_{\Omega,f})\psi\,dx  &=  \int_D (u_{\Omega_k,f}-u_{\Omega_k,f_j})\psi\,dx + \int_D (u_{\Omega_k,f_j}-u_{\Omega,f_j})\psi\,dx\\ 
&+ \int_D (u_{\Omega,f_j}- u_{\Omega,f})\psi\,dx.
\end{align*}
Observe that by using H\"older inequality for Young functions,  Lemma \ref{poincare} and Proposition  \ref{estabilidad}, we have that given $\ve>0$ there exists $j_0\in\N$ such that
\begin{align*}
\int_D (u_{\Omega_k,f}-u_{\Omega_k,f_j})\psi\,dx &\leq C \|   \nabla u_{\Omega_k,f}-\nabla u_{\Omega_k,f_j} \|_{L^{\Phi^*}(D)} \|\psi\|_{\Phi^*}\\&\leq C \|f-f_j\|_{L^{\Phi^*}(D)} \|\psi\|_{L^{\Phi^*}(D)} <\ve
\end{align*}
and
\begin{align*}
\int_D (u_{\Omega ,f}-u_{\Omega	,f_j})\psi\,dx &\leq C \| \nabla_{\Omega,f}-\nabla u_{\Omega,f_j} \|_{L^{\Phi^*}(D)} \|\psi\|_{L^{\Phi^*}(D)}\\ &\leq C \|f-f_j\|_{L^{\Phi^*}(D)} \|\psi\|_{L^{\Phi^*}(D)} <\ve
\end{align*}
uniformly in $k\in\N$ for every $j\geq j_0$, where $C=C(p^\pm,D)$. Moreover, by hypothesis and \eqref{estim1} we get
$$
 \int_D (u_{\Omega_k,f_j}-u_{\Omega,f_j})\psi\,dx \to 0 \quad \text{ as } k\to\infty.
$$
Gathering the last four relations we get that $v=u_{\Omega,f}$ and the proof concludes.
\end{proof}

In order to study the $\gamma-$convergence of the sequence of domains $\{\Omega_k\}_{k\in\N}$ we require some geometrical condition on the sequence of domains. 

With that end, we recall the following notion of set convergence.

\begin{defi}\label{Hausdorff}
We recall that the \emph{Hausdorff complementary topology} on $\mathcal{A}:=\{\Omega\colon \Omega\subset D, \Omega \text{ open}\}$ is given by the metric
$$
d_{H^c}(\Omega_1,\Omega_2)=d(\Omega_1^c,\Omega_2^c)
$$
where $d$ is the usual Hausdorff distance.

Finally, we say that $\{\Omega_k\}_{k\in \N}$ {\em converges to $\Omega$ in the sense of the Hausdorff complementary topology}, denoted by $\Omega_k\toh \Omega$ if $d_{H^c}(\Omega_k,\Omega)\to 0$ as $k\to \infty$.
\end{defi}
The following characterization of the Hausdorff complementary convergence is well-known. See \cite{Bucur-Butazzo} for a proof.
\begin{lema} \label{h.caract}
 If $\Omega_k \toh \Omega$, then for every compact set $K\subset \Omega$, there exist $N_K\in\N$ such that for every $k\geq N_K$ we have $K\subset \Omega_k$.
\end{lema}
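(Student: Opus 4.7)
The plan is to argue by contradiction: suppose there is a compact set $K \subset \Omega$ such that the conclusion fails, i.e., for infinitely many indices $k$ (which, after passing to a subsequence, I may assume are all of them) one can pick a point $x_k \in K \setminus \Omega_k = K \cap \Omega_k^c$. Since $K$ is compact in $\R^n$, up to a further subsequence I get $x_k \to x$ for some $x \in K \subset \Omega$. Because $\Omega$ is open, $x$ has a positive distance to $\Omega^c$: there exists $r > 0$ with $B(x,2r) \subset \Omega$, equivalently $d(x,\Omega^c) \geq 2r > 0$.

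Now I would cash in the hypothesis $\Omega_k \toh \Omega$, which by Definition \ref{Hausdorff} means $d_H(\Omega_k^c, \Omega^c) \to 0$ for the usual Hausdorff distance between closed sets. In particular
\[
\sup_{y \in \Omega_k^c} d(y,\Omega^c) \longrightarrow 0 \quad \text{as } k\to\infty,
\]
so the specific sequence $x_k \in \Omega_k^c$ satisfies $d(x_k,\Omega^c)\to 0$. Since the function $y \mapsto d(y, \Omega^c)$ is $1$-Lipschitz and $x_k \to x$, passing to the limit yields $d(x,\Omega^c) = 0$, which contradicts $d(x,\Omega^c) \geq 2r$ obtained above. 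This contradiction forces the existence of some $N_K$ with $K\subset \Omega_k$ for every $k\geq N_K$.

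The argument is essentially a routine unpacking of the definitions, so I do not expect any real obstacle. The only point worth handling with care is the interpretation of the complements (all sets live inside the bounded box $D$, and complements in Hausdorff-complementary topology are understood in the standard one-point compactification / bounded-ambient sense, which is the convention fixed by Definition \ref{Hausdorff}); once this is set, the one-sided Hausdorff estimate is precisely what is needed and the contradiction is immediate.
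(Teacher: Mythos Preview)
Your argument is correct and is the standard proof of this well-known fact. Note that the paper does not actually supply its own proof of this lemma; it simply cites \cite{Bucur-Butazzo}, so there is no ``paper's approach'' to compare against beyond the classical one you have reproduced.
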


The following observation will be useful in our arguments.
\begin{lema} \label{grad.acotado}
The sequence $\left\{  \phi(|\nabla u_{\Omega_k,f}|) \right\}_{k\in\N}$ is bounded in $L^{\Phi^*}(D)$.
\end{lema}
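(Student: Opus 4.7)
The plan is to combine Lemma \ref{lema.acotada}, which gives a uniform bound on $\|\nabla u_{\Omega_k,f}\|_{L^\Phi(\Omega_k)}$, with the pointwise inequality of Lemma \ref{lemita}, which controls $\Phi^*\circ\phi$ by $\Phi$. Since $u_{\Omega_k,f}\in W^{1,\Phi}_0(\Omega_k)$ is extended by zero on $D\setminus\Omega_k$, both integrals below are taken over $D$ without ambiguity.

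First I would convert the uniform norm bound into a uniform bound on the modular. By Lemma \ref{lema.acotada}, there is $C>0$ (depending only on $f,D,p^\pm$) such that $\|\nabla u_{\Omega_k,f}\|_{L^\Phi(\Omega_k)}\leq C$, and we may assume $C\geq 1$. By the definition \eqref{Luxemburg} of the Luxemburg norm and property \eqref{G1},
\begin{equation*}
\int_D \Phi(|\nabla u_{\Omega_k,f}|)\,dx \leq C^{p^++1}\int_D \Phi\!\left(\frac{|\nabla u_{\Omega_k,f}|}{C}\right)dx\leq C^{p^++1},
\end{equation*}
so the $\Phi$-modular of $\nabla u_{\Omega_k,f}$ is uniformly bounded in $k$.

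Next I would invoke Lemma \ref{lemita}, which asserts $\Phi^*(\phi(t))\leq (p^++1)\Phi(t)$ for all $t\geq 0$. Applying this pointwise with $t=|\nabla u_{\Omega_k,f}(x)|$ and integrating over $D$ yields
\begin{equation*}
\int_D \Phi^*\!\bigl(\phi(|\nabla u_{\Omega_k,f}|)\bigr)\,dx \leq (p^++1)\int_D \Phi(|\nabla u_{\Omega_k,f}|)\,dx \leq (p^++1)C^{p^++1}=:M,
\end{equation*}
uniformly in $k\in\N$. Hence the $\Phi^*$-modulars of $\phi(|\nabla u_{\Omega_k,f}|)$ are uniformly bounded. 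A routine application of the definition of the Luxemburg norm (together with the doubling property of $\Phi^*$, which follows from \eqref{cond}) then translates this modular bound into a uniform bound on $\|\phi(|\nabla u_{\Omega_k,f}|)\|_{L^{\Phi^*}(D)}$, completing the proof.

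There is essentially no obstacle: the statement is a direct consequence of chaining Lemma \ref{lema.acotada} and Lemma \ref{lemita}. The only mildly technical point is the passage between modulars and Luxemburg norms, which is handled using the power-type bounds \eqref{G1} and \eqref{g.s.1}.
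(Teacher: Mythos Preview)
Your proof is correct and follows exactly the paper's approach: apply Lemma \ref{lemita} pointwise to bound $\int_D\Phi^*(\phi(|\nabla u_{\Omega_k,f}|))\,dx$ by $(p^++1)\int_D\Phi(|\nabla u_{\Omega_k,f}|)\,dx$, then invoke Lemma \ref{lema.acotada}. You have simply been more explicit than the paper about the routine conversions between the Luxemburg norm and the modular.
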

\begin{proof}
Observe that Lemma \ref{lemita} gives that
\begin{align*}
\int_\Omega \Phi^*\left( |\phi(|\nabla u_{\Omega_k,f}|)| \right)\,dx \leq (p^+ +1) \int_\Omega \Phi(|\nabla u_{\Omega_k,f}|)\,dx.
\end{align*}
Therefore the result follows from Lemma \ref{lema.acotada}.
\end{proof}
The following result about the limit of solutions  is a first step in proving the continuity of solutions.

\begin{lema} \label{lema.conv.fuerte}
Let $\{u_{\Omega_k,f}\}_{k\in\N}$ be a sequence of solutions. Then, up to a subsequence, $u_{\Omega_k,f}\cd v$ weakly in $W^{1,\Phi}_0(D)$. Morevoer, $u_{\Omega_k,f}\to v$ strongly in $L^\Phi(D)$.
\end{lema}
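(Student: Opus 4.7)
The plan is to derive a uniform bound on the sequence in the norm of $W^{1,\Phi}_0(D)$, extract a weakly convergent subsequence by reflexivity, and then upgrade the weak $L^\Phi$-limit to strong convergence using the compact embedding into $L^\Phi(D)$ available for any bounded Lipschitz design box.

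First I would view each $u_{\Omega_k,f}$ as an element of $W^{1,\Phi}_0(D)$ (via zero extension, which is legitimate by Proposition \ref{eqiv.espacios} since $\Omega_k \subset D$). Lemma \ref{lema.acotada} yields a uniform bound on $\|\nabla u_{\Omega_k,f}\|_{L^\Phi(\Omega_k)} = \|\nabla u_{\Omega_k,f}\|_{L^\Phi(D)}$ depending only on $f$, $D$ and $p^\pm$. Combining with the Poincaré inequality of Lemma \ref{poincare} applied on the bounded set $D$ (Remark \ref{ctePoincare} guarantees that the Poincaré constant for $D$ bounds the one for every $\Omega_k \subset D$), one controls $\|u_{\Omega_k,f}\|_{L^\Phi(D)}$ and therefore obtains a uniform bound for $\|u_{\Omega_k,f}\|_{W^{1,\Phi}_0(D)}$.

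Since $\Phi$ satisfies \eqref{cond}, the space $W^{1,\Phi}_0(D)$ is reflexive, so I can extract a (not relabelled) subsequence and a function $v \in W^{1,\Phi}_0(D)$ with $u_{\Omega_k,f} \cd v$ weakly in $W^{1,\Phi}_0(D)$. This proves the first assertion.

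For the strong convergence in $L^\Phi(D)$, I would invoke Theorem \ref{compacidad} on the bounded Lipschitz box $D$. The integrability condition $\int_1^\infty \Phi^{-1}(s)/s^{1+1/n}\, ds$ is either divergent (case (a), giving directly $W^{1,\Phi}(D) \subset\subset L^\Phi(D)$) or convergent (case (b), giving $W^{1,\Phi}(D) \subset\subset C^{0,\sigma}(\overline D)$, which on the bounded domain $D$ embeds continuously into $L^\Phi(D)$). In either case the embedding $W^{1,\Phi}_0(D) \hookrightarrow L^\Phi(D)$ is compact, so the weak convergence in $W^{1,\Phi}_0(D)$ promotes to strong convergence $u_{\Omega_k,f}\to v$ in $L^\Phi(D)$. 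No genuine obstacle arises; the argument is a direct combination of the a priori estimate of Lemma \ref{lema.acotada}, reflexivity, and compact embedding.
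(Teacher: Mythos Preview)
Your proposal is correct and follows essentially the same route as the paper's proof: uniform $W^{1,\Phi}_0(D)$-bound via Lemma \ref{lema.acotada}, weak compactness by reflexivity, and then the case split on the integral $\int_1^\infty \Phi^{-1}(s)s^{-1-1/n}\,ds$ to invoke Theorem \ref{compacidad}. In case (b) the paper threads the embedding through the chain $C^{0,\sigma(t)}(\bar D)\subset C^{0,\tilde\sigma}(\bar D)\subset L^{p^-}(D)\subset L^\Phi(D)$, whereas you observe more directly that $C^{0,\sigma}(\bar D)\hookrightarrow L^\Phi(D)$ continuously on a bounded domain; both arguments are valid and yield the same compactness.
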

\begin{proof}
Denote $u_k:=u_{\Omega_k,f}$. Observe that by Lemma \ref{lema.acotada}, $u_k$ is uniformly bounded in $W^{1,\Phi}_0(D)$, then,  up to a subsequence if necessary, we have that 
\begin{align} \label{cdebil}
u_k\cd v \quad \text{ weakly in } W^{1,\Phi}_0(D)
\end{align}
to some function $v\in W^{1,\Phi}_0(D)$.

Now, if 
$I:=\int_1^\infty  s^{-(1+1/n)}\Phi^{-1}(s)\,ds = \infty$, by Theorem \ref{compacidad} item (a) we have that the convergence of $u_k$ to $v$ is strong in $L^\Phi(D)$. 

If otherwise $I<\infty$ we proceed as follows. By Theorem \ref{compacidad} item (b), $W^{1,\Phi}(D)$ is compactly  embedded in the H\"older space $C^{0,\sigma(t)}(\bar D)$ with modulus of continuity $\sigma(t)$  given by
$$
\sigma(t)=\int_{t^{-n}}^\infty \frac{\Phi^{-1}(s)}{s^{1+1/n}}\,ds.
$$
Since for $t\geq 1$ condition \eqref{cond} gives that
$$
\frac{\Phi^{-1}(t)}{t^{1+1/n}} \geq t^{\frac{1}{p^+ +1}-1-\frac{1}{n}},
$$
it  follows that $C^{0,\sigma(t)}(\bar D)\subset C^{0,\tilde\sigma}(\bar D)$, where $\tilde\sigma:=1-\frac{n}{p^++1}$.   But, since $C^{0,\tilde \sigma }(\bar D)\subset L^{p^-}(D)  \subset L^\Phi(D)$ (see \cite[Theorem 3.17.7]{Fucik-John-Kufner}) we get that $W^{1,\Phi}(D)\subset\subset L^\Phi(D)$ and therefore $u_k\to v$ strongly in $L^\Phi(D)$.
\end{proof}

\begin{prop} \label{prop.previa}
Assume that $f\in L^{\Phi^*}(D)$ and let $\Phi$ be a Young function satisfying \eqref{cond} and \eqref{condC}. If $\Omega_k\toh\Omega$ then, up to a subsequence, $u_{\Omega_{k,f}}\cd v$, where $v\in W^{1,\Phi}_0(D)$.  satisfies the equation $-\Delta_\phi v=f$ in weak sense.	
\end{prop}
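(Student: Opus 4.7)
By Lemma \ref{lema.acotada} the sequence $u_k:=u_{\Omega_k,f}$ is uniformly bounded in $W^{1,\Phi}_0(D)$, and Lemma \ref{lema.conv.fuerte} already delivers, along a subsequence (not relabeled), a function $v\in W^{1,\Phi}_0(D)$ with $u_k\cd v$ weakly in $W^{1,\Phi}_0(D)$ and $u_k\to v$ strongly in $L^\Phi(D)$. The non-trivial task is to identify $v$ as a weak solution of $-\Delta_\phi v=f$ in $\Omega$, and the main obstruction is that the PDE is available only on each $\Omega_k$, so testing against $u_k-v$ globally is not permitted since $v$ need not vanish on $\partial\Omega_k$.

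The plan is a localized Minty-type argument. Fix an arbitrary nonnegative cutoff $\eta\in C_c^\infty(\Omega)$. Because $\supp\eta$ is compact in $\Omega$ and $\Omega_k\toh\Omega$, Lemma \ref{h.caract} gives $\supp\eta\subset \Omega_k$ for all sufficiently large $k$, so $\eta(u_k-v)$ is an admissible test function in $W^{1,\Phi}_0(\Omega_k)$. Inserting it in the weak formulation of the equation for $u_k$ yields
\begin{align*}
\int_{\Omega_k}\eta\,\frac{\phi(|\nabla u_k|)}{|\nabla u_k|}\nabla u_k\cdot(\nabla u_k-\nabla v)\,dx
&= \int_{\Omega_k}f\,\eta(u_k-v)\,dx \\
&\quad -\int_{\Omega_k}(u_k-v)\,\frac{\phi(|\nabla u_k|)}{|\nabla u_k|}\nabla u_k\cdot\nabla\eta\,dx.
\end{align*}
Both terms on the right hand side tend to zero: by H\"older's inequality in Orlicz spaces, the uniform bound on $\{\phi(|\nabla u_k|)\}$ in $L^{\Phi^*}(D)$ coming from Lemma \ref{grad.acotado}, and the strong $L^\Phi$ convergence $u_k\to v$. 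On the other hand, $\eta\,\phi(|\nabla v|)\nabla v/|\nabla v|\in L^{\Phi^*}(D)$ (again via Lemma \ref{lemita}), so the weak convergence $\nabla u_k\cd\nabla v$ in $L^\Phi(D)$ gives
$$
\int_{\Omega_k}\eta\,\frac{\phi(|\nabla v|)}{|\nabla v|}\nabla v\cdot(\nabla u_k-\nabla v)\,dx \longrightarrow 0.
$$
Subtracting the two displays and invoking the refined monotonicity inequality of Lemma \ref{lema4.1} — which is precisely where the extra hypothesis \eqref{condC} enters — one concludes
$$
\int_{\Omega_k}\eta\,\Phi(|\nabla u_k-\nabla v|)\,dx\longrightarrow 0.
$$
Since $\eta\in C_c^\infty(\Omega)$ is arbitrary, this means $\nabla u_k\to\nabla v$ in $L^\Phi$ on every compact subset of $\Omega$, and, passing to a further subsequence, $\nabla u_k\to\nabla v$ almost everywhere in $\Omega$.

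It remains to pass to the limit in the weak equation. Given any $\psi\in C_c^\infty(\Omega)$, the same application of Lemma \ref{h.caract} ensures $\psi\in W^{1,\Phi}_0(\Omega_k)$ for $k$ large, so
$$
\int \frac{\phi(|\nabla u_k|)}{|\nabla u_k|}\nabla u_k\cdot\nabla\psi\,dx=\int f\psi\,dx.
$$
On $\supp\psi$ the strong $L^\Phi$ convergence of the gradients combined with the $\Delta_2$ condition for $\Phi^*$ allows us to invoke Proposition \ref{teo.rr} (exactly as in the continuity of $\mathcal F'$ in the proof of Proposition \ref{prop.c1}) to obtain
$$
\frac{\phi(|\nabla u_k|)}{|\nabla u_k|}\nabla u_k\longrightarrow \frac{\phi(|\nabla v|)}{|\nabla v|}\nabla v\qquad\text{in }L^{\Phi^*}(\supp\psi).
$$
Letting $k\to\infty$ yields the weak formulation of $-\Delta_\phi v=f$ against every $\psi\in C_c^\infty(\Omega)$, completing the identification of the limit $v$ as a weak solution.
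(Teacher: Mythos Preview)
Your argument is correct and follows essentially the same route as the paper: localize with a cutoff $\eta\in C_c^\infty(\Omega)$ (legitimate for large $k$ by Lemma~\ref{h.caract}), test the equation with $\eta(u_k-v)$, kill the right-hand side via Lemma~\ref{grad.acotado} and the strong $L^\Phi$ convergence, subtract the $\nabla v$ term using weak convergence, and then apply Lemma~\ref{lema4.1} to obtain local strong convergence of the gradients. The only cosmetic difference is in the final passage to the limit: the paper identifies the weak $L^{\Phi^*}$ limit of the flux via boundedness plus a.e.\ convergence, whereas you upgrade to strong $L^{\Phi^*}$ convergence through Proposition~\ref{teo.rr}; both are valid and lead to the same conclusion.
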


\begin{proof}
Denote $u_k=u_{\Omega_k,f}$. Observe that by  Lemma \ref{lema.acotada} and  Lemma \ref{lema.conv.fuerte},  up to a subsequence if necessary, we have that 
\begin{align} \label{cdebil}
u_k\cd v \quad \text{ weakly in } W^{1,\Phi}_0(D)
\end{align}
and
\begin{equation} \label{c.fuerte}
u_k\to v \quad \text{ strongly in } L^\Phi(D)
\end{equation}
to some function $v\in W^{1,\Phi}_0(D)$.

In order to conclude the proposition we have to prove that for every $\psi\in C_c^\infty(\Omega)$ it holds that
$$
\int_{\Omega} \frac{\phi(|\nabla v|)}{|\nabla v|}\nabla v \cdot \nabla \psi\, dx=\int_\Omega f \psi\, dx.
$$
Observe that it suffices to prove that $\frac{\phi(|\nabla u_k|)}{|\nabla u_k|}\nabla u_k \cd  \frac{\phi(|\nabla v|)}{|\nabla v|}\nabla v$ weakly in $L^{\Phi^*}(\Omega)$.

Let $\psi\in C_c^\infty(\Omega)$. Since $K:=\supp \psi$ is compact and $\Omega_k\toh \Omega$, by Lemma \ref{h.caract} exists a $N$ such that for all $k\geq N_0$, $K \subset \Omega_k$.

Set $K^\ve=\{x\in\R^n\colon d(x,K)<\ve\}$ with $\ve$ small enough such that $K_\ve\subset \subset \Omega_k\cap \Omega$ for every $k\geq N_1$.

From now on, we consider $k\geq \max\{N_0,N_1\}$. Let $\eta\in C_c^\infty(\Omega)$ such that $\eta=1$ in $K^\frac{\ve}{2}$, $\eta=0$ in $(K^\ve)^c$ and $0\leq \eta\leq 1$ and consider the test function $\psi_k=\eta(u_k-v) \in W^{1,\Phi}_0(K^\ve)$. Then
$$
\int_{K^\ve} \frac{\phi(|\nabla u_k|)}{|\nabla u_k|}\nabla u_k \cdot \nabla \psi_k\, dx=\int_{K^\ve} f \psi_k\, dx,
$$
from where
\begin{align*}
\int_{K^\ve} \frac{\phi(|\nabla u_k|)}{|\nabla u_k|}\nabla u_k \cdot \eta &\nabla (u_k-v)\, dx =\int_{K^\ve} f \psi_k\, dx\\ &- \int_{K^\ve} \frac{\phi(|\nabla u_k|)}{|\nabla u_k|}\nabla u_k \cdot (u_k-v) \nabla \eta \, dx.
\end{align*}
Since $u_k\cd v$ in  $W^{1,\Phi}_0(D)$ we have that $\int_{K^\ve} f \psi_k\,dx\to 0$. The second integral in the right hand side of the equality above can be bounded by using H\"older's inequality as follows
$$
\|\nabla \eta\|_{L^\infty(D)}   \| \phi(|\nabla u_k|)\|_{L^{\Phi^*}(D)} \| u_k-v\|_{L^\Phi(D)}\to 0,
$$
where the convergence holds by using \eqref{c.fuerte} and fact that   $\| \phi(|\nabla u_k|)\|_{\Phi^*}$ is bounded in light of Lemma \ref{grad.acotado}. Therefore, 
$$
\limsup_{k\to\infty}\int_{K^\ve} \frac{\phi(|\nabla u_k|)}{|\nabla u_k|}\nabla u_k \cdot \eta \nabla (u_k-v)\, dx \leq 0.
$$
On the other hand, since \eqref{cdebil} holds, 
$$
\int_{K^\ve} \frac{\phi(|\nabla v|)}{|\nabla v|}\nabla v \cdot \eta \nabla (u_k-v)\, dx\to 0.
$$
therefore, subtracting the last two relation we get
$$
\limsup_{k\to\infty}\int_{K^\ve} \left( \frac{\phi(|\nabla u_k|)}{|\nabla u_k|}\nabla u_k - \frac{\phi(|\nabla v|)}{|\nabla v|}\nabla v  \right) \cdot \eta \nabla (u_k-v)\, dx \leq 0.
$$
Since $K^\frac{\ve}{2}\subset K^\ve$ and $\eta=1$ in $K^\frac{\ve}{2}$, from Lemma \ref{lema4.1} we have
$$
\lim_{k\to\infty}\int_{K^\frac{\ve}{2}} \left( \frac{\phi(|\nabla u_k|)}{|\nabla u_k|}\nabla u_k - \frac{\phi(|\nabla v|)}{|\nabla v|}\nabla v  \right) \cdot   \nabla (u_k-v)\, dx =0.
$$

Again the light of Lemma \ref{lema4.1}, \cite[Lemma 3.10.4]{Fucik-John-Kufner} and \cite[3.18.5]{Fucik-John-Kufner} follows
%
\begin{equation} \label{conv.ae}
\nabla u_k\to \nabla v \quad\text{  a.e. in } K^\frac{\ve}{2}.
\end{equation}

Finally, since $\{  \phi(|\nabla u_k|) \frac{ \nabla u_k}{|\nabla u_k|} \}_{k\in\N}$ is bounded in $L^{\Phi^*}(D)$ by Proposition \ref{grad.acotado}, there exists $\xi\in (L^\Phi(K^\frac{\ve}{2}))^n$ such that
$$
\frac{\phi(|\nabla u_k|)}{|\nabla u_k|} \nabla u_k \cd \xi \quad \text{ weakly in } L^{\Phi^*}(K^\frac{\ve}{2}).
$$

The almost everywhere convergence stated in \eqref{conv.ae} allows us to conclude that $\xi = \phi(|\nabla v)\frac{ \nabla v}{|\nabla v|}$ in $K^\frac{\ve}{2}	$ and that
$$
\int_{K^{\frac{\ve}{2}}} \frac{\phi(|\nabla u_k|)}{|\nabla u_k|}\nabla u_k \cdot \nabla \psi\, dx\to  \int_{K^{\frac{\ve}{2}}} \frac{\phi(|\nabla v|)}{|\nabla v|}\nabla v \cdot \nabla \psi\, dx. 
$$
Since $\supp(\nabla \psi)\subset K\subset K^\frac{\ve}{2}\subset K^\ve\subset \Omega_k\cap \Omega$, we get that
$$
\int_{\Omega_k} \frac{\phi(|\nabla u_k|)}{|\nabla u_k|}\nabla u_k \cdot \nabla \psi\, dx\to  \int_{\Omega} \frac{\phi(|\nabla v|)}{|\nabla v|}\nabla v \cdot \nabla \psi\, dx
$$
concluding the proof.
\end{proof}

\color{black}
It only remains to be seen that the limit function $u_\Omega$ satisfies the boundary condition. For a general sequence of domains this could be false. Therefore, that conclusion will follow by assuming a further  capacitary condition on the sequence of domains and by using the  characterization of Orlicz-Sobolev spaces stated in Proposition \ref{eqiv.espacios} given in terms of the $\Phi-$capacity.

We are finally in position to prove our main result.
\begin{teo}\label{main}
 Let $\Phi$ be a Young function satisfying \eqref{cond} and  \eqref{condC}. Assume that $f\in L^{\Phi^*}(D)$ and let $\Omega_k,\Omega\subset D$. If $\Omega_k\toh\Omega$ and $\cp(\Omega_k\setminus \Omega,D)\to 0$, then, up to a subsequence, $u_{\Omega_{k,f}}\cd u_{\Omega,f}$ weakly in $W^{1,\Phi}_0(D)$. 
\end{teo}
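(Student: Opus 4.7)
The plan is the following. By Proposition \ref{prop.previa}, I can immediately extract a subsequence (still denoted $u_k := u_{\Omega_k,f}$) such that $u_k \rightharpoonup v$ weakly in $W^{1,\Phi}_0(D)$, where $v$ satisfies $-\Delta_\phi v = f$ in the weak sense on $D$. Thanks to the uniqueness of solutions granted by Theorem \ref{teo.existencia}, the whole task collapses to proving that $v \in W^{1,\Phi}_0(\Omega)$, since then $v$ will coincide with $u_{\Omega, f}$. To establish this membership my strategy is to appeal to the capacitary characterization in Proposition \ref{eqiv.espacios}: it suffices to show that the $\Phi$-quasicontinuous representative $\tilde v$ vanishes $\Phi$-q.e.\ on $D \setminus \Omega$.

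To see that, I would first invoke Lemma \ref{prop.c} and pass to a further subsequence so that $\tilde u_k \to \tilde v$ $\Phi$-q.e.\ in $D$, outside an exceptional set $N_0$ of zero $\Phi$-capacity. Since each $u_k$ lies in $W^{1,\Phi}_0(\Omega_k)$, Proposition \ref{eqiv.espacios} also gives that $\tilde u_k = 0$ $\Phi$-q.e.\ on $D \setminus \Omega_k$, i.e.\ off some null set $N_k$. By the countable subadditivity stated in Lemma \ref{prop.capa}, the union $N := N_0 \cup \bigcup_{k \geq 1} N_k$ still has zero $\Phi$-capacity. Now I split the points of $(D \setminus \Omega) \setminus N$ into two classes: the set $A$ of points lying outside $\Omega_k$ for infinitely many $k$, and the set $B$ of points that eventually belong to every $\Omega_k$. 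On $A$ one has $\tilde u_k(x) = 0$ along a subsequence, so the pointwise convergence forces $\tilde v(x) = 0$.

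The remaining obstacle is to control $B$, and this is precisely where the capacitary hypothesis $\cp(\Omega_k \setminus \Omega, D) \to 0$ enters. I would write $B \subset \bigcup_{k_0 \in \N} \bigcap_{k \geq k_0}(\Omega_k \setminus \Omega)$; for each fixed $k_0$ the intersection is contained in every single $\Omega_m \setminus \Omega$ with $m \geq k_0$, so by the monotonicity of capacity and the hypothesis, $\cp\bigl(\bigcap_{k \geq k_0}(\Omega_k \setminus \Omega), D\bigr) = 0$, and countable subadditivity from Lemma \ref{prop.capa} then yields $\cp(B, D) = 0$. Combining the two parts, $\tilde v = 0$ $\Phi$-q.e.\ on $D \setminus \Omega$, so $v \in W^{1,\Phi}_0(\Omega)$ and uniqueness of the Dirichlet problem on $\Omega$ closes the argument. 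The main technical hurdle I anticipate is precisely this handling of $B$: the Hausdorff complementary convergence alone cannot guarantee that a given point of $D \setminus \Omega$ is eventually evicted from $\Omega_k$, and it is exactly the relative $\Phi$-capacity assumption that plugs this gap.
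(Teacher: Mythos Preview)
Your overall strategy is correct and close to the paper's, but there is one genuine gap. You invoke Lemma~\ref{prop.c} to extract a subsequence with $\tilde u_k \to \tilde v$ $\Phi$-q.e., yet that lemma requires \emph{strong} convergence in $W^{1,\Phi}_0(D)$, whereas Proposition~\ref{prop.previa} only gives you $u_k \rightharpoonup v$ weakly (and strong convergence in $L^\Phi(D)$ via Lemma~\ref{lema.conv.fuerte}, which is not enough to control capacities). The strong $W^{1,\Phi}_0$ convergence is established only \emph{after} the theorem, in Remark~\ref{rem.fuerte}, so you cannot use it here without circularity.

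The paper repairs this with Mazur's Lemma: from the weak convergence one builds convex combinations $v_j=\sum_{k=j}^{\ell_j} a_{k_j}u_k$ converging strongly to $v$ in $W^{1,\Phi}_0(D)$, and then Lemma~\ref{prop.c} legitimately gives $\tilde v_j\to\tilde v$ $\Phi$-q.e. Note that this forces a reshaping of your $A/B$ split: each $\tilde v_j$ vanishes $\Phi$-q.e.\ only on $\bigcap_{k=j}^{\ell_j}\Omega_k^c \supset \tilde\Omega_j^c$ with $\tilde\Omega_j:=\bigcup_{k\ge j}\Omega_k$, so the natural set on which $\tilde v$ is shown to vanish is $E^c$ with $E:=\bigcap_{j\ge 1}\tilde\Omega_j=\limsup_k\Omega_k$, rather than your set $A$. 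Your capacity argument for $B$ is essentially the paper's argument for $E\setminus\Omega$: after passing to a subsequence with $\cp(\Omega_k\setminus\Omega,D)\le 2^{-k}$, subadditivity gives $\cp(\tilde\Omega_j\setminus\Omega,D)\le 2^{-(j-1)}$ and hence $\cp(E\setminus\Omega,D)=0$. So once you insert Mazur's Lemma and replace the pointwise $A/B$ dichotomy by the set $E$, your proof coincides with the paper's.
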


\begin{proof}
Observe that from Proposition \ref{prop.previa} there exists a function $v\in W^{1,\Phi}_0(D)$ such that $u_k \cd v$ in $W^{1,\Phi}_0(D)$ and $v$ solves $-\Delta_\phi v = f$ in $\Omega$ in the weak sense. Therefore it only remains to be proved the boundary condition $v=0$ on $\partial \Omega$. But by Proposition \ref{eqiv.espacios} it suffices with proving that $\tilde v = 0$ $\Phi-$q.e. in $\Omega^c$.

Consider $\tilde \Omega_j=\cup_{k\geq j} \Omega_k$ and $E=\cap_{j\geq 1} \tilde \Omega_j$.

Since $u_k\cd v$ in $W^{1,\Phi}_0(D)$, by using Mazur's Lemma (see for instance \cite{Ekeland}) there exists a sequence $v_j=\sum_{k=j}^{\ell_j} a_{k_j} u_k$ such that $a_{k_j}\geq 0$, $\sum_{k=j}^{\ell_j} a_{k_j}=1$ and $v_j \to v$ strongly in $W^{1,\Phi}_0(D)$.

By Proposition \ref{eqiv.espacios}, the functions $u_k\in W^{1,\Phi}_0(\Omega_k)$ have a $\Phi-$q.e. representative $\tilde u_k$ such that $\tilde u_k=0$ $\Phi-$q.e. $\Omega_k^c$. Hence, $\tilde v_j= \sum_{k=j}^{\ell_j} a_{k_j} \tilde u_k =0$ $\Phi-$q.e. $\cap_{k=j}^{\ell_j} \Omega_k^c \supset \tilde \Omega_j^c$ for every $j\geq 1$. Then $\tilde v_j=0$ $\Phi-$q.e. in $\tilde \Omega_j^c$ or every $j\geq 1$.  From this, $\tilde v_j =0$ $\Phi-$q.e. $\cup_{j\geq 1}\tilde \Omega_j^c = E^c$.

Since $v_j\to v$ in $W^{1,\Phi}_0(D)$, by Lemma  \ref{prop.c}, $\tilde v_k \to \tilde v$ $\Phi-$q.e. Therefore, $\tilde v=0$ $\Phi-$q.e. in $E^c$. By our hypothesis we can assume, up to a subsequence if necessary,  that  $\cp(\Omega_k\setminus \Omega,D)\leq \frac{1}{2^k}$. Therefore, by using Lemma \ref{prop.capa}, 
\begin{align*}
\cp(\tilde \Omega_j\setminus \Omega,D) &= \cp(\cup_{k\geq j} \Omega_k \setminus \Omega,D)\\
&\leq \sum_{k\geq j} \cp(\Omega_k\setminus \Omega,D) \leq \sum_{k\geq j} \frac{1}{2^k} = \frac{1}{2^{j-1}}.
\end{align*}

Finally, since $E\subset \tilde \Omega_j$ we obtain that $E\setminus \Omega\subset \tilde \Omega_k\setminus \Omega$ for every $j\geq 1$ and, from the last relation we get that
$$
\cp(E\setminus \Omega,D) \leq \cp(\tilde \Omega_j\setminus \Omega,D) \leq \frac{1}{2^{j-1}} \qquad \text{for every }j\geq 1.
$$
By taking limit as $j\to\infty$ we arrive at the equality 
$$
\cp(E\setminus \Omega,D)= \cp(\Omega^c \setminus E^c,D)=0,
$$
 which allow us to conclude that $\tilde v=0$ $\Phi-$q.e. in $\Omega^c$, which finishes the proof.
\end{proof}

\begin{remark} \label{rem.fuerte}
Observe that the convergence of the sequence of solutions $\{u_{\Omega_k,f}\}_{k\in\N}$ to the limit solution $u_{\Omega,f}$ obtained in Theorem \ref{main} is indeed strong in $W^{1,\Phi}_0(D)$. In effect, since $u_k:=u_{\Omega_k,f}$ and $u:=u_{\Omega,f}$ are  weak solutions of \eqref{prob.f} and \eqref{prob.f.lim}, respectively, it follows that
\begin{align*}
&\int_D \left(\frac{\phi(|\nabla u_k|)}{|\nabla u_k|}\nabla u_k  - \frac{\phi(|\nabla u|)}{|\nabla u|}\nabla u \right)\cdot (\nabla u_k-\nabla u)\,dx=\\ 
&-\int_D \left(\frac{\phi(|\nabla u_k|)}{|\nabla u_k|}\nabla u_k  - \frac{\phi(|\nabla u|)}{|\nabla u|}\nabla u \right)\cdot \nabla u\,dx - \int_D \phi(|\nabla u|)\frac{\nabla u}{|\nabla u|} \cdot (\nabla u_k-\nabla u)\,dx\\
&+\int_D f(u_k-u)\,dx.
\end{align*}
From Lemma \ref{lema4.1} and Theorem \ref{main}, it follows that
$$
\lim_{k\to \infty} \int_D \Phi(|\nabla u_k - \nabla u|)\,dx = 0, 
$$
giving the desired convergence in light of \cite[Lemma 3.10.4]{Fucik-John-Kufner}.
\end{remark}

We provide now a condition independent of the capacitary condition  which ensures the $\gamma-$convergence of the sequence of domains.

\begin{teo}\label{main.2}
 Assume that the design box $D\subset \R^n$ is Lipschitz. Let $\Phi$ be a Young function satisfying \eqref{cond} and \eqref{condC} such that
\begin{equation}\label{cond.morrey}
\int_1^\infty \frac{\Phi^{-1}(t)}{t^{1+1/n}}\, dt <\infty,
\end{equation} 
and let $f\in L^{\Phi^*}(D)$.
Assume that the sequence of domains $\{\Omega_k\}_{k\in\N}\subset D$ verifies that $\Omega_k \toh \Omega\subset D$, then $u_{\Omega_k,f}\cd u_{\Omega,f}\text{ weakly in } W^{1,\Phi}_0(D)$.
\end{teo}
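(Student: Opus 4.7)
The plan is to build on Proposition \ref{prop.previa}, using \eqref{cond.morrey} to promote weak convergence to uniform convergence and thereby to verify the missing boundary condition without invoking a capacitary hypothesis. Let $u_k:=u_{\Omega_k,f}$, extended by zero to $D$, so $u_k\in W^{1,\Phi}_0(D)$. By Proposition \ref{prop.previa}, along a subsequence $u_k\cd v$ weakly in $W^{1,\Phi}_0(D)$ for some $v\in W^{1,\Phi}_0(D)$ satisfying $-\Delta_\phi v = f$ weakly in $\Omega$. Since \eqref{cond} forces $\Phi$ to be strictly convex, Theorem \ref{teo.existencia} yields uniqueness of the weak solution of the limit problem, so it suffices to show $v|_\Omega\in W^{1,\Phi}_0(\Omega)$: then $v=u_{\Omega,f}$, and since the limit does not depend on the subsequence the entire sequence converges.

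The hypothesis \eqref{cond.morrey} is precisely \eqref{condint2}, so Theorem \ref{compacidad}(b) applied to the Lipschitz box $D$ yields a compact (hence continuous) embedding $W^{1,\Phi}(D)\subset\subset C^{0,\sigma(t)}(\bar D)$. Consequently each $u_k$ has a continuous representative on $\bar D$ and, along the subsequence, $u_k\to v$ uniformly on $\bar D$. A first consequence is that the continuous representative of each $u_k$ vanishes identically on $\Omega_k^c$: by definition $W^{1,\Phi}_0(\Omega_k)=\overline{C_c^\infty(\Omega_k)}$, so choose $\varphi_j\in C_c^\infty(\Omega_k)$ with $\varphi_j\to u_k$ in $W^{1,\Phi}(D)$; the Hölder embedding forces $\varphi_j\to u_k$ uniformly on $\bar D$, and since each $\varphi_j$ vanishes on $\Omega_k^c$, so does $u_k$.

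Now fix $x\in\Omega^c$. By Hausdorff complementary convergence $\Omega_k\toh\Omega$ we have $d(x,\Omega_k^c)\to 0$, so there exist $x_k\in\Omega_k^c$ with $x_k\to x$. Using continuity of $v$, uniform convergence $u_k\to v$ on $\bar D$, and $u_k(x_k)=0$, we obtain
\begin{equation*}
|v(x)| \le |v(x)-v(x_k)|+|v(x_k)-u_k(x_k)|+|u_k(x_k)| \longrightarrow 0,
\end{equation*}
so the continuous representative of $v$ vanishes identically on $\Omega^c$. In particular it vanishes $\Phi$-quasi-everywhere on $D\setminus\Omega$, and Proposition \ref{eqiv.espacios} gives $v|_\Omega\in W^{1,\Phi}_0(\Omega)$. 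Thus $v=u_{\Omega,f}$, which finishes the proof.

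The main obstacle, and the reason \eqref{cond.morrey} is imposed, is transferring abstract weak convergence in $W^{1,\Phi}_0(D)$ to pointwise control on $\Omega^c$: without the Hölder embedding the limit $v$ need not inherit the boundary condition, and a Cioranescu--Murat type strange term may appear. Condition \eqref{cond.morrey} plays the role of the Morrey condition $p>n$ in the classical setting, producing continuous representatives for Orlicz--Sobolev functions and ruling out this pathology in the present argument.
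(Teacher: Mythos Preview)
Your proof is correct and follows the same essential strategy as the paper's: use Proposition \ref{prop.previa} for the equation, then exploit the compact H\"older embedding from \eqref{cond.morrey} to upgrade weak convergence to uniform convergence, and finally verify $v=0$ on $\Omega^c$ pointwise via the Hausdorff complementary convergence.

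The execution differs in one respect worth noting. The paper first invokes Proposition \ref{prop.equiv} to reduce to $f\equiv 1$, and then uses the weak maximum principle and the domain monotonicity of Lemma \ref{monotonia} to obtain the two-sided bound $0\le u_{\Omega_k,1}\le u_{D,1}$; uniform equicontinuity is then read off from the H\"older regularity of $u_{D,1}$. You bypass this detour entirely: you apply the compact embedding $W^{1,\Phi}(D)\subset\subset C^{0,\sigma(t)}(\bar D)$ directly to the bounded sequence $\{u_k\}$, which yields uniform convergence to $v$ without any reduction to the torsion problem or appeal to comparison principles. Your route is therefore somewhat more economical and works uniformly for all $f\in L^{\Phi^*}(D)$ at once. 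The paper's route, on the other hand, makes the geometric content slightly more explicit by tying everything to the single barrier $u_{D,1}$. Both arguments close the boundary condition in the same way, via Proposition \ref{eqiv.espacios} after the pointwise verification on $\Omega^c$.
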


\begin{proof}
By Proposition \ref{prop.equiv} we can consider $f\equiv 1$ and assume that $u_{\Omega_k,1}\cd v$ weakly in $W^{1,\Phi}_0(D)$.

Moreover, applying Proposition  \ref{prop.previa} we  conclude that $v$ fulfills the limit equation $-\Delta_\phi v = f$ in the weak sense. In order to conclude the result let us see that $v$ satisfies the boundary condition. By Proposition \ref{eqiv.espacios}  it is enough to see that $\tilde v=0$ $\Phi-$q.e. in $\Omega^c$.

From the maximum principle stated in Proposition \ref{princ.max.fuerte} together with Proposition \ref{monotonia} we have that 
\begin{equation} \label{desig99}
	0\leq u_{\Omega_k,1} \leq  u_{D,1}. 
\end{equation}

As we mentioned, by our regularity assumptions on $\Phi$ and $\Omega$, $u_{D,1}$ has a modulus of continuity $\sigma(t)$ in $\bar D$ and  $u_{D,1}\in C^{0,\sigma(t)}(\bar D)$. Therefore, 
 given $x\in \Omega^c$ and $y\in \partial D$, we have that
 $$
 u_{D,1}(x)=|u_{D,1}(x)-u_{D,1}(y)|\leq C(D) \sigma(|x-y|) \leq C(D) \, \sigma(\text{diam}(D)),
 $$
 which together with \eqref{desig99} gives that $\{u_{\Omega_k,1}\}_{k\in\N}$ is uniformly bounded. Hence, since moreover this sequence is H\"older continuous, it is uniformly equicontinuous. Then, $u_{\Omega_k,1}\to v$ uniformly, and then $\tilde u_{\Omega_k,1} \to \tilde v$.
 
 Let us see that $\tilde v=0$ $\Phi-$q.e in $\Omega^c$. Indeed, given $x\in \Omega^c$, since $\Omega_k\toh \Omega$, there exists a sequence $x_k\in \Omega_k^c$ such that $x_k\to x$. Then, by the uniform convergence, $\tilde u_{\Omega_k,1}(x_k) \to \tilde v(x)$. Since $\supp  \tilde u_k\subset \bar \Omega_k$, $\tilde u_k(x_k)=0$ for any $k\in\N$, therefore $\tilde v=0$ concluding the proof. 
\end{proof}
\begin{remark}
Observe that if $p^-+1>n$, then  condition 	\eqref{cond.morrey} is fulfilled.
\end{remark}

\begin{remark} \label{remar}
	We cannot guarantee that condition \eqref{cond.morrey} is the optimal one in Theorem \ref{main.2}. On the counterpart of the case of powers dealt  in \cite{Bucur-Trebeschi}, we conjecture that condition \eqref{cond.morrey} can be improved (up to $p^->n$) if additionally is required that the number of connected components of each  $\Omega_k^c$ is uniformly bounded.
\end{remark}

\section{Extensions and final remarks} \label{sec.rem}
\subsection{Some particular cases}
For some particular configurations of sequence of open sets  $\{\Omega_k\}_{k\in\N}$ contained in the design box $D\subset \R^n$  it can be easily checked that   we can get rid of the capacitary condition required in Theorem \ref{main}.
\begin{teo} 
 Let $\Phi$ be a Young function satisfying \eqref{cond} and \eqref{condC}. Assume that $f\in L^{\Phi^*}(D)$ and let $\Omega_k,\Omega\subset D$. If one of the following conditions are fulfilled
 
 \begin{itemize}
 \item[(a)] for every $k\in\N$, $\Omega_k$ is uniformly Lipschitz, i.e., $\partial\Omega_k$ is is locally the graph of a Lipschitz function which can be taken uniform over all the Lipschitz constants all the sets $\Omega_k$,
 
 \item[(b)] $\Omega_k,\Omega$ are convex open sets,
 
 \item[(c)] $\{\Omega_k\}_{k\in\N}\subset \Omega$,
 \end{itemize}
 and   $\Omega_k\toh\Omega$, then, up to a subsequence, $u_{\Omega_{k,f}}\cd u_{\Omega,f}$ weakly in $W^{1,\Phi}_0(\Omega)$ (and therefore strongly by Remark \ref{rem.fuerte}).
\end{teo}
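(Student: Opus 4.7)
The plan is to reduce each case to Theorem \ref{main} by either verifying the capacitary condition $\cp(\Omega_k\setminus \Omega, D)\to 0$ directly, or by circumventing it via a separate argument showing that the weak limit $v\in W^{1,\Phi}_0(D)$ provided by Proposition \ref{prop.previa} actually lies in $W^{1,\Phi}_0(\Omega)$. In the latter case, the characterization of Proposition \ref{eqiv.espacios} reduces the problem to showing that the quasicontinuous representative $\tilde v$ vanishes $\Phi$-q.e.\ on $\Omega^c$.

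Case (c) will be handled first, as it is immediate: since $\Omega_k\subset \Omega$ for all $k$, we have $\Omega_k\setminus \Omega=\emptyset$, hence $\cp(\Omega_k\setminus \Omega, D)=0$ trivially and Theorem \ref{main} applies verbatim, with strong convergence in $W^{1,\Phi}_0(D)$ following from Remark \ref{rem.fuerte}.

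For case (a), the plan is to exploit the uniform Lipschitz regularity to produce, near any boundary point $x_0\in\partial \Omega$, a common Lipschitz chart flattening $\partial \Omega_k$ (for $k$ large) and $\partial \Omega$ simultaneously; this is possible because $\Omega_k\toh \Omega$ and the Lipschitz constants are uniform. In these flattened coordinates $u_{\Omega_k,f}$ lives in $W^{1,\Phi}_0$ of a halfspace-like set, and the uniform $W^{1,\Phi}$ bound of Lemma \ref{lema.acotada}, together with the trace theory for Orlicz-Sobolev functions on Lipschitz domains, will allow me to pass to the limit and conclude that the trace of $v$ on $\partial \Omega$ vanishes; hence by Proposition \ref{eqiv.espacios}, $v\in W^{1,\Phi}_0(\Omega)$, and uniqueness of the solution of \eqref{prob.f.lim} forces $v=u_{\Omega,f}$. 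For case (b), I would use that bounded convex open sets are automatically Lipschitz and that, for convex $\Omega_k\toh \Omega$ with $\Omega$ also convex, the interior cone at each boundary point has a uniform opening depending only on $D$. This yields Wiener-type capacity lower bounds of the form $\cp(\Omega_k^c\cap B_r(x_0), B_{2r}(x_0))\ge c\, r^{n-p^-}$ uniformly in $x_0\in \partial \Omega_k$ and $k$, which is the classical condition guaranteeing that weak $W^{1,\Phi}_0$ limits have vanishing quasicontinuous representatives on the boundary, thereby placing $v$ in $W^{1,\Phi}_0(\Omega)$.

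The main obstacle will be dealing with the boundary behavior in cases (a) and (b): unlike case (c), the capacitary hypothesis does not hold for free and one has to extract uniform boundary regularity from the geometric assumption. In case (a), the delicate point is to show that the uniform Lipschitz charts yield a uniform-in-$k$ trace bound on Orlicz-Sobolev functions; this requires a version of the trace theorem adapted to Young functions satisfying \eqref{cond}, which is not an immediate consequence of the power case and will invoke the doubling estimates \eqref{G1}--\eqref{G2} to transfer classical arguments. In case (b), the Wiener-type capacity estimate for convex complements in the Orlicz setting has to be worked out using the refined modular estimates of Lemmas \ref{propiedades} and \ref{Phi.tilde}, in contrast to the power case where the exponent $p$ plays a clean role. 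Once these uniform boundary estimates are in hand, both cases collapse to an application of the machinery developed for Theorem \ref{main}, with strong convergence following from Remark \ref{rem.fuerte}.
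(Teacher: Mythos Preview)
The paper does not actually prove this theorem: it is stated in the ``Extensions'' section with only the preamble that ``it can be easily checked that we can get rid of the capacitary condition required in Theorem \ref{main}.'' So there is no detailed argument to compare against.

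Your treatment of case (c) is exactly the trivial verification the paper has in mind: $\Omega_k\setminus\Omega=\emptyset$ makes the capacity hypothesis of Theorem \ref{main} automatic.

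For (a) and (b) your plan is sound but considerably more elaborate than what the paper intends. The ``easy check'' the authors allude to is the well-known fact (see e.g.\ Bucur--Buttazzo or Henrot--Pierre, already cited in the paper) that for uniformly Lipschitz or convex families, $H^c$-convergence implies Mosco convergence of the corresponding $W^{1,p}_0$ spaces; in particular any weak limit of $u_{\Omega_k,f}\in W^{1,\Phi}_0(\Omega_k)$ lands in $W^{1,\Phi}_0(\Omega)$. The underlying arguments are purely geometric (uniform cone conditions, stability of extension operators) and do not see the difference between $t^p$ and a Young function satisfying \eqref{cond}, so they transfer to the Orlicz setting with essentially no change. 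Your proposed route through Orlicz trace theory in (a) and Orlicz Wiener estimates in (b) would also work and is in the same spirit, but it front-loads technical work (uniform Orlicz trace bounds, modular capacity lower bounds) that the classical references already package for you. If you want to match the paper's level of detail, a one-line citation of those references after Proposition \ref{prop.previa} suffices; if you want a self-contained argument, your outline is a reasonable way to unwind what those references do.
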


\subsection{A generalization}
In view of of \cite[Theorem 1]{Montenegro} and \cite[Theorem 7.2.14]{Fucik-John-Kufner}, given a Young function $\Phi$ fulfilling \eqref{cond} and \eqref{condC},  Theorems \ref{main} and \ref{main.2} remain true for the more general operator $-\Delta_\phi (u) + d(x) \phi(|u|)$  defined in $W^{1,\Phi}_0(\Omega)$, where $d\geq 0$ is a bounded function.

Moreover, given $\Omega\subset \R^n$, and a Young function $\Phi$ satisfying \eqref{cond},   from the proof of Theorem \ref{main} it follows that the  theorem remains true for a more general class of operators   $\mathcal{F}\colon W^{1,G}_0(\Omega)\to \R$ of the form  $\mathcal{F}:=\diver g(x,\nabla u)$ whenever
\begin{itemize}
\item[(a)] $\mathcal{F}$ satisfies a strong maximum principle for supersolutions  in $\Omega$, 
\item[(b)] for any $a,b\in\R^n$ it holds that $(\phi(x,a)-\phi(x,b))\cdot (a-b)\geq C \Phi(|a-b|)$ for some universal constant $C$.
\end{itemize}

\subsection{Extension to nonlocal operators}

Recently, in \cite{FBS} it was introduced the non-local counterpart of the $\phi-$Laplacian treated in this manuscript. More precisely, given a Young function $\Phi$ satisfying \eqref{cond}, it can be considered the non-local non-standard growth operator
$$
(-\Delta_\phi)^s u:=2\, \text{p.v.} \int_{\R^n} \phi\left( |D_s u|\right)\frac{D_s u}{|D_s u|} \frac{dy}{|x-y|^{n+s}},
$$
where the $s-$H\"older quotient is defined as
$$
D_s u(x,y) = \frac{	u(x)-u(y)}{|x-y|^s}.
$$
Here \emph{p.v.} stands for {\em in principal value}, $s\in (0,1)$ is a fractional parameter  and $\phi=\Phi'$. This operator in naturally well-defined in the class of fractional Orlicz-Sobolev functions defined as 
$$
W^{s,\Phi}(\Omega):=\left\{ u\in L^\Phi(\Omega) \text{ such that } \rho_{s,G}(u)<\infty \right\},
$$
where the modular  $\rho_{s,\Phi}$ is  defined as
$$
\rho_{s,\Phi}(u):=
  \iint_{\R^n\times\R^n} \Phi( |D_su(x,y)|)  \,d\mu,
$$
with $d\mu(x,y):=\frac{ dx\,dy}{|x-y|^n}$. The space $W^{s,\Phi}_0(\Omega)$ is defined as the closure of the $C_c^\infty$ functions with respect to the norm $\|\cdot\|_{ W^{s,\Phi}}$. See \cite{FBS} for additional properties and its connection with the local operator $\Delta_\phi$.

In view of \cite[Proposition 2.8 and Proposition 3.8]{S}, following the arguments in the proof of Theorem \ref{main.2} with the pertinent changes, it can be obtained the following extension for the continuity of solutions with respect to the domain in the fractional case (c.f. \cite{Baroncini-Bonder-Spedaletti}).

\begin{teo}
 Assume that the design box $D\subset \R^n$ has Lipschitz boundary. Let $\Phi$ be a Young function satisfying \eqref{cond} and  \eqref{condC} such that $p^-+1>n$ and  let $f\in L^{\Phi^*}(D)$.
Assume that the sequence of domains $\{\Omega_k\}_{k\in\N}\subset D$ verifies that $\Omega_k \toh \Omega\subset D$ then $u_{\Omega_k,f}\cd u_{\Omega,f}\text{ weakly in } W^{s,\Phi}_0(D)$, where $u_k\in W^{s,\Phi}_0(\Omega_k)$ and $u \in W^{s,\Phi}_0(\Omega)$ are solution of
\begin{align}
\begin{cases}
(-\Delta_\phi)^s u_k = f &\quad \text{ in } \Omega_k\\
u_k=0 &\quad \text{ on } \partial \Omega_k
\end{cases}
\qquad \text{and} \qquad 
\begin{cases}
(-\Delta_\phi)^s u = f &\quad \text{ in } \Omega\\
u=0 &\quad \text{ on } \partial \Omega,
\end{cases}
\end{align}
respectively.
\end{teo}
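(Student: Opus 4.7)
The plan is to mirror the argument of Theorem \ref{main.2}, replacing each local ingredient by its fractional counterpart available in \cite{FBS,S}. First I would establish a uniform bound $\|u_{\Omega_k,f}\|_{W^{s,\Phi}_0(D)}\leq C$ independent of $k$: testing the equation against $u_{\Omega_k,f}$, using \eqref{cond} to get $(p^-+1)\rho_{s,\Phi}(u_{\Omega_k,f}) \leq \int f u_{\Omega_k,f}\,dx$, and applying the fractional Hölder and Poincaré inequalities (\cite[Proposition~2.8]{S}) exactly as in Lemma \ref{lema.acotada}. By reflexivity of $W^{s,\Phi}_0(D)$ we extract a subsequence with $u_{\Omega_k,f}\cd v$ weakly in $W^{s,\Phi}_0(D)$.

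Next I would reduce to the case $f\equiv 1$ via the fractional analogues of Proposition \ref{estabilidad} and Proposition \ref{prop.equiv}; this step uses the comparison principle and strong maximum principle for $(-\Delta_\phi)^s$ from \cite{FBS}. Then I would pass to the limit in the weak formulation to identify $v$ as a weak solution of $(-\Delta_\phi)^s v=f$ in $\Omega$: localizing with a cutoff $\eta\in C_c^\infty(\Omega)$ supported on a compact set $K\subset\Omega$ (which is contained in $\Omega_k$ eventually by Lemma \ref{h.caract}), using $\eta(u_{\Omega_k,f}-v)$ as a test function and invoking the fractional monotonicity inequality (the non-local counterpart of Lemma \ref{lema4.1}, which is precisely the one established in \cite{S}) together with the a.e.\ convergence of $D_s u_{\Omega_k,f}$ to $D_s v$.

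The final and main step is to verify the boundary condition $\tilde v=0$ $\Phi$-q.e.\ on $\Omega^c$. Here the hypothesis $p^-+1>n$ enters crucially: by \cite[Proposition~3.8]{S} the integrability condition analogous to \eqref{cond.morrey} yields a compact embedding $W^{s,\Phi}_0(D)\subset\subset C^{0,\sigma}(\bar D)$ for some modulus $\sigma$. Applied to the torsion function this gives $u_{D,1}\in C^{0,\sigma}(\bar D)$, and since the maximum principle yields $0\leq u_{\Omega_k,1}\leq u_{D,1}$, the sequence $\{u_{\Omega_k,1}\}_{k\in\N}$ is uniformly bounded and uniformly equicontinuous. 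Arzelà--Ascoli plus the already established weak convergence promote $u_{\Omega_k,1}\to v$ to uniform convergence in $\bar D$, so the quasi-continuous representatives also converge pointwise. Given $x\in \Omega^c$, by $\Omega_k\toh \Omega$ there exist $x_k\in \Omega_k^c$ with $x_k\to x$; since $\supp \tilde u_{\Omega_k,1}\subset \bar\Omega_k$ we have $\tilde u_{\Omega_k,1}(x_k)=0$ and hence $\tilde v(x)=0$, which in turn gives $\tilde v=0$ $\Phi$-q.e.\ on $\Omega^c$.

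The main obstacle that requires some care is the passage to the limit in the nonlinear nonlocal term, because the double integral structure entangles values of $u_{\Omega_k,f}$ inside and outside the localization set $K$. I would handle this by splitting the integration domain $\R^n\times\R^n$ into $K^\varepsilon\times K^\varepsilon$ and its complement, using the fractional monotonicity on the former and controlling the latter via the uniform $W^{s,\Phi}$ bound of Lemma \ref{lema.acotada} together with the nonlocal Hölder inequality for the modular $\rho_{s,\Phi}$; the remaining arguments then run in complete parallel with Theorem \ref{main.2}.
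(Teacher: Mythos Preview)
Your proposal is correct and follows precisely the route indicated by the paper, which does not give a detailed proof but merely states that the result follows by repeating the argument of Theorem~\ref{main.2} with the fractional ingredients from \cite[Proposition~2.8 and Proposition~3.8]{S}. Your outline fleshes this sketch out accurately, invoking exactly those two propositions for the fractional Poincar\'e inequality and the H\"older embedding, and you correctly identify the one genuinely new technical point---the nonlocal tail in the passage to the limit---together with a standard way of handling it.
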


\subsection{Some conjectures and open questions} \label{sec.open}

As mentioned in Remark \ref{remar}, we conjecture that the statement of Theorem \ref{main.2} can be improved up to $p^- >n$ if additionally is required that the number of connected components of each $\Omega_k^c$ is uniformly bounded. The available techniques which could lead to this result seem to be  highly nontrivial in this case, cf. \cite{Bucur-Trebeschi, HKM}.

Another interesting question we let open in this manuscript is to decide, if under the assumptions of our main results, the $\gamma-$converge of the sequence $\{\Omega_k\}_{k\in\N}$ to the limit set $\Omega$ implies that  $\lambda_\mu(\Omega_k) \to \lambda_\mu(\Omega)$ and/or $\Lambda_\mu(\Omega_k) \to \Lambda_\mu(\Omega)$, (cf. \cite[Section 2.2.3]{Henrot} and \cite[Remark 3.5.2]{HenrotPierre}) where, given a bounded and open set $A\subset \R^n$, $\lambda_\mu(A)$ denotes the \emph{Dirichlet eigenvalue of $-\Delta_\phi$ with energy} $\mu>0$, defined as the number $\lambda_\mu\in\R$ such that
$$
\int_A \frac{\phi(|\nabla u_\mu|)}{|\nabla u_\mu|}\nabla u_\mu\cdot \nabla v \,dx = \lambda_\mu \int_A \phi(|u_\mu|)v\,dx \quad \forall v\in W^{1,\Phi}_0(A),
$$
being $u_\mu \in W^{1,\Phi}_0(A)$ such that $\int_A \Phi(|u_\mu|)\,dx=\mu$ the corresponding eigenfunction; on the other hand, $\Lambda_\mu(A)$ denotes the quantity 
$$
\Lambda_\mu = \min\left\{ \frac{\int_A \Phi(|\nabla u|)\,dx}{\int_A \Phi(|u|)\,dx}\colon u\in W^{1,\Phi}_0(\Omega) \text{ and } \int_A \Phi(|u|)\,dx=\mu\right\}.
$$
This minimum is attained by the same eigenfunction $u_\mu$.
Observe that in general $\lambda_\mu$ is not variational and $\lambda_\mu$ may be different to $\Lambda_\mu$. See \cite{S}.

\section*{Acknowledgements}
All of the authors were partially supported by grants UBACyT 20020130100283BA, CONICET, PIP 11220150100032CO, PROICO 031906 (UNSL) and PROIPRO 032418 (UNSL).

Part of this manuscript was produced during a visit of the second author at UNSL, he strongly appreciates the kind hospitality of the third author.

\bibliographystyle{amsplain}
\bibliography{biblio}

\def\ocirc#1{\ifmmode\setbox0=\hbox{$#1$}\dimen0=\ht0 \advance\dimen0
  by1pt\rlap{\hbox to\wd0{\hss\raise\dimen0
  \hbox{\hskip.2em$\scriptscriptstyle\circ$}\hss}}#1\else {\accent"17 #1}\fi}
\providecommand{\bysame}{\leavevmode\hbox to3em{\hrulefill}\thinspace}
\providecommand{\MR}{\relax\ifhmode\unskip\space\fi MR }
\providecommand{\MRhref}[2]{%
  \href{http://www.ams.org/mathscinet-getitem?mr=#1}{#2}
}
\providecommand{\href}[2]{#2}
\begin{thebibliography}{10}

\bibitem{Allaire}
Gr\'{e}goire Allaire, \emph{Shape optimization by the homogenization method},
  Applied Mathematical Sciences, vol. 146, Springer-Verlag, New York, 2002.
  \MR{1859696}

\bibitem{Baroncini-Bonder}
Carla Baroncini and Juli\'{a}n Fern\'{a}ndez~Bonder, \emph{An extension of a
  theorem of {V}. \v{S}ver\'{a}k to variable exponent spaces}, Commun. Pure
  Appl. Anal. \textbf{14} (2015), no.~5, 1987--2007. \MR{3359555}

\bibitem{Baroncini-Bonder-Spedaletti}
Carla Baroncini, Juli\'{a}n Fern\'{a}ndez~Bonder, and Juan~F. Spedaletti,
  \emph{Continuity results with respect to domain perturbation for the
  fractional {$p$}-{L}aplacian}, Appl. Math. Lett. \textbf{75} (2018), 59--67.
  \MR{3692161}

\bibitem{DHH2}
Debangana Baruah, Petteri Harjulehto, and Peter H\"{a}st\"{o}, \emph{Capacities
  in generalized {O}rlicz spaces}, J. Funct. Spaces (2018), Art. ID 8459874,
  10. \MR{3864628}

\bibitem{Orlicz-Birnbaum}
Orlicz~W. Birnbaum Z.~W., \emph{Uber die verallgemeinerung des begriffes der
  zueinander konjugierten potenzen}, Studia Math. \textbf{3} (1931), no.~1,
  1--67.

\bibitem{Bucur-Butazzo}
Dorin Bucur and Giuseppe Buttazzo, \emph{Variational methods in shape
  optimization problems}, Progress in Nonlinear Differential Equations and
  their Applications, vol.~65, Birkh\"{a}user Boston, Inc., Boston, MA, 2005.
  \MR{2150214}

\bibitem{Bucur-Trebeschi}
Dorin Bucur and Paola Trebeschi, \emph{Shape optimisation problems governed by
  nonlinear state equations}, Proc. Roy. Soc. Edinburgh Sect. A \textbf{128}
  (1998), no.~5, 945--963. \MR{1642112}

\bibitem{Bucur-Zolesio}
Dorin Bucur and Jean-Paul Zol\'{e}sio, \emph{{$N$}-dimensional shape
  optimization under capacitary constraint}, J. Differential Equations
  \textbf{123} (1995), no.~2, 504--522. \MR{1362884}

\bibitem{Cio-Mu}
Doina Cioranescu and Fran\c{c}ois Murat, \emph{A strange term coming from
  nowhere [ {MR}0652509 (84e:35039a); {MR}0670272 (84e:35039b)]}, Topics in the
  mathematical modelling of composite materials, Progr. Nonlinear Differential
  Equations Appl., vol.~31, Birkh\"{a}user Boston, Boston, MA, 1997,
  pp.~45--93. \MR{1493040}

\bibitem{DSSSS}
Jo\~{a}o~Vitor da~Silva, Ariel~M. Salort, Anal\'{\i}a Silva, and Juan~F.
  Spedaletti, \emph{A constrained shape optimization problem in
  {O}rlicz-{S}obolev spaces}, J. Differential Equations \textbf{267} (2019),
  no.~9, 5493--5520. \MR{3991565}

\bibitem{DHH}
Lars Diening, Petteri Harjulehto, Peter H{\"a}st{\"o}, and Michael
  R{\ocirc{u}}{\v{z}}i{\v{c}}ka, \emph{Lebesgue and {S}obolev spaces with
  variable exponents}, Lecture Notes in Mathematics, vol. 2017, Springer,
  Heidelberg, 2011. \MR{2790542}

\bibitem{Ekeland}
Ivar Ekeland and Roger T\'{e}mam, \emph{Convex analysis and variational
  problems}, english ed., Classics in Applied Mathematics, vol.~28, Society for
  Industrial and Applied Mathematics (SIAM), Philadelphia, PA, 1999, Translated
  from the French. \MR{1727362}

\bibitem{FBPLS}
Juli\'{a}n Fern\'{a}ndez~Bonder, Mayte P\'{e}rez-Llanos, and Ariel~M. Salort,
  \emph{A h\"older infinity laplacian obtained as limit of orlicz fractional
  laplacians},  (2018), preprint, arXiv:1807.01669.

\bibitem{FBS}
Juli\'{a}n Fern\'{a}ndez~Bonder and Ariel~M. Salort, \emph{Fractional order
  {O}rlicz-{S}obolev spaces}, J. Funct. Anal. \textbf{277} (2019), no.~2,
  333--367. \MR{3952156}

\bibitem{Harjuleto-Hasto-Koskenoja}
Petteri Harjulehto, Peter H\"{a}st\"{o}, Mika Koskenoja, and Susanna Varonen,
  \emph{Sobolev capacity on the space {$W^{1,p(\cdot)}(\mathbb{R}^n)$}}, J.
  Funct. Spaces Appl. \textbf{1} (2003), no.~1, 17--33. \MR{2011498}

\bibitem{HKM}
Juha Heinonen, Tero Kilpel\"{a}inen, and Olli Martio, \emph{Nonlinear potential
  theory of degenerate elliptic equations}, Oxford Mathematical Monographs, The
  Clarendon Press, Oxford University Press, New York, 1993, Oxford Science
  Publications. \MR{1207810}

\bibitem{Henrot}
Antoine Henrot, \emph{Extremum problems for eigenvalues of elliptic operators},
  Frontiers in Mathematics, Birkh\"{a}user Verlag, Basel, 2006. \MR{2251558}

\bibitem{HenrotPierre}
Antoine Henrot and Michel Pierre, \emph{Variation et optimisation de formes},
  Math\'{e}matiques \& Applications (Berlin) [Mathematics \& Applications],
  vol.~48, Springer, Berlin, 2005, Une analyse g\'{e}om\'{e}trique. [A
  geometric analysis]. \MR{2512810}

\bibitem{KR}
M.~A. Krasnoselskii and Ja.~B. Rutickii, \emph{Convex functions and orlicz
  spaces}, Translated from the first Russian edition by Leo F. Boron, P.
  Noordhoff Ltd., Groningen, 1961. \MR{0126722}

\bibitem{Fucik-John-Kufner}
O.~Kufner, A.~John and S.~Fu{\v{c}}{\'\i}k, \emph{Function spaces}, De Gruyter
  series in nonlinear analysis and applications, no. v. 1, De Gruyter, 2012.

\bibitem{Montenegro}
Marcelo Montenegro, \emph{Strong maximum principles for supersolutions of
  quasilinear elliptic equations}, Nonlinear Anal. \textbf{37} (1999), no.~4,
  Ser. A: Theory Methods, 431--448. \MR{1691019}

\bibitem{Ohno-Tetsu}
Takao Ohno and Tetsu Shimomura, \emph{Musielak-{O}rlicz-{S}obolev spaces with
  zero boundary values on metric measure spaces}, Czechoslovak Math. J.
  \textbf{66(141)} (2016), no.~2, 371--394. \MR{3519608}

\bibitem{Pironneau}
Olivier Pironneau, \emph{Optimal shape design for elliptic systems}, Springer
  Series in Computational Physics, Springer-Verlag, New York, 1984. \MR{725856}

\bibitem{RR}
M.~M. Rao and Z.~D. Ren, \emph{Applications of {O}rlicz spaces}, Monographs and
  Textbooks in Pure and Applied Mathematics, vol. 250, Marcel Dekker, Inc., New
  York, 2002. \MR{1890178}

\bibitem{S}
Ariel~M. Salort, \emph{Eigenvalues and minimizers for a non-standard growth
  non-local operator}, J. Differential Equations \textbf{268} (2020), no.~9,
  5413--5439. \MR{4066053}

\bibitem{Sokol}
Jan Sokolowski and Jean-Paul Zol\'{e}sio, \emph{Introduction to shape
  optimization}, Springer Series in Computational Mathematics, vol.~16,
  Springer-Verlag, Berlin, 1992, Shape sensitivity analysis. \MR{1215733}

\bibitem{Tartar}
Luc Tartar, \emph{The general theory of homogenization}, Lecture Notes of the
  Unione Matematica Italiana, vol.~7, Springer-Verlag, Berlin; UMI, Bologna,
  2009, A personalized introduction. \MR{2582099}

\bibitem{Sverak}
V.~\v{S}ver\'{a}k, \emph{On optimal shape design}, J. Math. Pures Appl. (9)
  \textbf{72} (1993), no.~6, 537--551. \MR{1249408}

\end{thebibliography}

\end{document}